\let\origsection=\section \def\section{\@ifstar{\origsection*}{\mysection}} 
\def\mysection{\@startsection{section}{1}\z@{.7\linespacing\@plus\linespacing}{.5\linespacing}{\normalfont\scshape\centering\S}}
\tikzset{snake it/.style={decorate, decoration=snake}}
\tikzset{decorate sep/.style 2 args=
{decorate,decoration={shape backgrounds,shape=circle,shape size=#1,shape sep=#2}}}
\numberwithin{equation}{section}
\numberwithin{figure}{section}
\setlist[enumerate]{label=$(\arabic*)$}
\newtheorem{theorem}{Theorem}
\numberwithin{theorem}{section}
\newtheorem{lemma}[theorem]{Lemma}  
\newtheorem{cor}[theorem]{Corollary}
\newtheorem{prop}[theorem]{Proposition}
\newtheorem{mainresult}{Theorem}
\newtheorem{ques}{Question}
\newtheorem*{Hconj}{Halin's conjecture}
\theoremstyle{definition}
\newtheorem{defn}[theorem]{Definition}
\newcommand{\script}{\mathcal}
\newcommand{\eps}{\varepsilon}
\newcommand{\parentheses}[1]{{\left( {#1} \right)}}
\newcommand{\p}{\parentheses}
\newcommand{\Set}[1]{{\left\lbrace {#1} \right\rbrace}}
\def\set#1:#2{\Set{{#1} \colon {#2}}}
\def\downcl#1{\lceil{#1}\rceil}
\def\upcl#1{\lfloor{#1}\rfloor}
\newcommand{\N}{\mathbb{N}}
\newcommand{\Pcal}{{\mathcal P}}
\renewcommand{\triangleleft}{\vartriangleleft}
\renewcommand{\leq}{\leqslant}
\renewcommand{\geq}{\geqslant}
\renewcommand{\ge}{\geq}
\renewcommand{\le}{\leq}
\renewcommand{\rho}{\varrho}
\renewcommand{\subset}{\subseteq}
\renewcommand{\supset}{\supseteq}
\newcommand{\nottriangleleft}{\not\kern-1pt\mathrel{\triangleleft}}
\newcommand{\rayinf}{\mathbin{\sharp}}
\newcommand{\weakch}{$\operatorname{GCH}$}
\newcommand{\cf}{\mathrm{cf}}
\newcommand{\ZFC}{\textnormal{ZFC}}
\newcommand{\CH}{\textnormal{CH}}
\newcommand{\GCH}{\textnormal{GCH}}
\DeclareMathOperator{\medcup}{\mathsmaller{\bigcup}}
\DeclareMathOperator{\HC}{\operatorname{HC}}
\begin{document}

\title{Halin's end degree conjecture} 

\author[Geschke]{Stefan Geschke}
\address{Universit\"at Hamburg, Department of Mathematics, Bundesstrasse 55 (Geomatikum), 20146 Hamburg, Germany}
\email{\{stefan.geschke, jan.kurkofka, ruben.melcher, max.pitz\}@uni-hamburg.de}

\author[Kurkofka]{Jan Kurkofka}
\author[Melcher]{Ruben Melcher}
\author[Pitz]{Max Pitz}

\keywords{infinite graph; ends; end degree; ray graph}

\subjclass[2010]{05C63}

\begin{abstract}
An end of a graph $G$ is an equivalence class of rays, where two rays are equivalent if there are infinitely many vertex-disjoint paths between them in $G$.
The degree 
of an end is the maximum cardinality of a collection of pairwise disjoint rays in this equivalence class.

Halin conjectured that the end degree can be characterised in terms of certain typical ray configurations, which would generalise  his famous \emph{grid theorem}. In particular, every end of regular uncountable degree $\kappa$ would contain a \emph{star of rays}, i.e.\ a  configuration  consisting of  a central ray $R$ and $\kappa$ neighbouring rays $(R_i \colon i  < \kappa)$ all disjoint from each other and each $R_i$ sending a family of infinitely many disjoint paths to $R$ so that paths from distinct families only meet  in $R$.

We show that Halin's conjecture fails for end degree $ \aleph_1$,  holds for $\aleph_2,\aleph_3,\ldots,\aleph_\omega$, fails for $ \aleph_{\omega+1}$, and  is undecidable (in ZFC) for the next $\aleph_{\omega+n}$ with $n \in \N$, $n \geq 2$. Further results include a complete solution for all cardinals under GCH, complemented by a number of consistency results.
\end{abstract}

\vspace*{-1.14cm}
\maketitle

\vspace*{-.5cm}
\section{Overview}

\subsection{Halin's end degree conjecture} An \emph{end} of a graph $G$ is an equivalence class of rays, where two rays of $G$ are \emph{equivalent} if there are infinitely many vertex-disjoint paths between them in~$G$. 
The \emph{degree}   $\deg(\varepsilon)$
of an end $\varepsilon$ is the maximum cardinality of a collection of pairwise disjoint rays in $\varepsilon$, see~Halin~\cite{H65}.

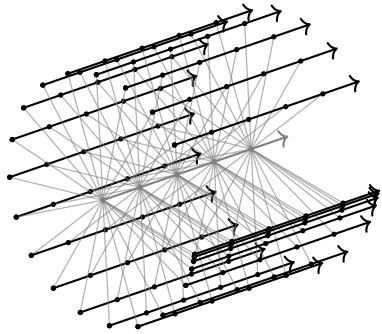
\begin{figure}[ht]
    \centering

\tdplotsetmaincoords{70}{135+12.25}
\begin{tikzpicture}[tdplot_main_coords = 1,scale=0.7]]
\tdplotsetrotatedcoords{12.5}{270}{320}
\pgfmathsetmacro\its{20}%
  
\foreach \n in {0,...,\its}
{

    \pgfmathparse{.75*\its}\edef\lif{\pgfmathresult}%
    \pgfmathparse{int(round(.5*\its))}\edef\flip{\pgfmathresult}%
    
    \pgfmathparse{int(round(\flip-1))}\edef\flup{\pgfmathresult}%
    
    "\ifthenelse{\n>\flup \AND \n<\lif}{\pgfmathparse{360*(\n/\its)}\edef\x{\pgfmathresult}}{\pgfmathparse{(45*(atan(\n-\lif))*(pi/180)/(pi/2))+270}\edef\x{\pgfmathresult}}"%
    "\ifthenelse{\n<\flip}{\pgfmathparse{360*(\flup-\n)/\its)}\edef\x{\pgfmathresult}}{}"%
    
    \draw[thick,<-,line cap=round,tdplot_rotated_coords] ({2.5*sin(\x )},{2.5*cos(\x )},5 ) --  ({2.5*sin(\x)},{2.5*cos(\x )},0);
    
    \foreach \y in {0,1, 2,3, 4} {
        \draw[gray, line width=.17mm,tdplot_rotated_coords,opacity=.6]
        ({2.5*sin( \x )},{2.5*cos( \x )},\y ) -- (0,0, \y);
        \fill[tdplot_rotated_coords] ({2.5*sin( \x )},{2.5*cos( \x )},\y ) circle (.5mm); 
    }

    \fill[tdplot_rotated_coords] ({2.5*sin( \x )},{2.5*cos( \x )},0 ) circle (.5mm);
}

\draw[thick,->,gray,tdplot_rotated_coords] (0,0,0) -- (0,0,5);
\foreach \y in {0,1, 2,3, 4} {
    \fill[gray,tdplot_rotated_coords] (0,0,\y ) circle (.5mm);
}
\end{tikzpicture}
\caption{The Cartesian product of a star and a ray.
}
    \label{fig:StarProductRay2}
\end{figure}

However, for many purposes a degree-witnessing collection $\script{R}\subset\eps$ on its own forgets significant information about the end, as it tells us nothing about how $G$ links up the rays in~$\script{R}$; in fact $G[\,\bigcup\script{R}\,]$  is usually disconnected.
Naturally, this raises the question of whether one can describe typical configurations in which $G$ must link up the disjoint rays in some degree-witnessing subset of a pre-specified end.

Observing  that prototypes of ends of any prescribed degree are given by the Cartesian product of a sufficiently large connected graph with a ray (see e.g.\ Figure~\ref{fig:StarProductRay2}), Halin \cite{H65} made this question precise by introducing the notion of a `ray graph', as follows. 

Given a set $\mathcal{R}$ of disjoint equivalent rays in a graph $G$, we call a graph~$H$ with vertex set $\mathcal{R}$ a \emph{ray graph} in $G$ if there exists a set $\Pcal$ of independent 
$\mathcal{R}$-paths (independent paths with precisely their endvertices on rays from $\mathcal{R}$)  in $G$ such that for each edge $RS$ of $H$ there are infinitely many disjoint $R$--$S$ paths in $\Pcal$. Given an end $\eps$ in  a graph $G$, a \emph{ray graph for $\eps$} 
is a connected ray graph in $G$ on a degree-witnessing subset of~$\eps$.
The precise formulation of the question reads as follows:

\begin{align*}
\textit{Does every graph contain ray graphs for all its ends?}
\end{align*}

For ends of finite degree it is straightforward to answer the question in the affirmative. For ends of countably infinite degree the answer is positive too, but only elaborate constructions are known. These constructions by Halin~\cite[Satz~4]{H65} and by Diestel~\cite{dlestel2004short,Bible} show that in this case the ray graph itself can always be chosen as a ray:

\begin{theorem}[Halin's grid theorem
]
\label{thm_halin}
Every graph with an end of infinite degree contains a subdivision of the hexagonal quarter grid whose rays belong to that end. 
\end{theorem}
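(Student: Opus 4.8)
The plan is to build the desired subdivision as a union of horizontal rays taken from $\eps$ together with a staggered system of connecting \emph{rungs}. Since $\deg(\eps)$ is infinite, $\eps$ contains a countable family $R_0,R_1,R_2,\dots$ of pairwise disjoint rays, and as these rays are pairwise equivalent there are, for every $n$, infinitely many disjoint $R_n$--$R_{n+1}$ paths in $G$. Suitable tails of the $R_n$ will serve as the horizontal rays of the grid; since a tail of a ray lies in the same end, these horizontal rays automatically belong to $\eps$, and because the grid's rungs supply infinitely many disjoint paths between any two of its rays, all of them are equivalent to one another and hence lie in $\eps$ as well. The key point is that a subdivision is a purely combinatorial object, so no planar drawing has to be respected: what must be arranged is (i) that the rungs and rays are globally disjoint except at the prescribed attachment vertices, and (ii) that on each pair of consecutive rays the chosen rungs attach in a common linear order, so that the hexagonal faces close up.

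The engine of the argument is the two-ray case. Given infinitely many disjoint $R_n$--$R_{n+1}$ paths, I would order them by their endpoints along $R_n$ and then, by an Erd\H{o}s--Szekeres type monotone-subsequence argument, pass to an infinite subfamily whose endpoints along $R_{n+1}$ are monotone as well; since the endpoints are distinct vertices of a ray they are unbounded, so an \emph{increasing} subsequence can always be extracted. This yields a subdivided ladder between $R_n$ and $R_{n+1}$, and reserving the rungs at alternately offset positions on successive levels produces exactly the hexagonal attachment pattern. The degree-$3$ nature of the hexagonal grid is what makes this natural: each interior vertex of the grid carries only a single rung, so one never needs two disjoint rungs meeting at the same attachment vertex, which is precisely the extra connectivity a square grid would demand and which a general end of infinite degree need not provide.

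With the two-ray case in hand, I would assemble the whole grid by an inductive, diagonalising construction. Maintaining after each stage a finite subgraph of $G$ that is a subdivision of a finite piece of the hexagonal quarter grid, at the next stage I would add one further rung on each currently active level, choosing it from the relevant (already thinned) family so as to avoid every vertex used so far, and then extend each horizontal ray beyond the new attachment points. To prevent a newly chosen rung from running into another ray, I would pass to tails of the rays, discarding the finite initial segments into which such intrusions fall; to keep the construction alive I would at each step retain an infinite subfamily of admissible rungs on every level. A standard back-and-forth bookkeeping then guarantees that all countably many rungs and all the rays are eventually built.

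The hard part will be exactly this disjointness management. Because all the rays lie in a single end, a connecting path is free to wander back into earlier rays or across previously placed rungs, and there is no a priori bound on how far out this can happen; the delicate point is to keep infinitely many mutually disjoint rungs available on every level \emph{simultaneously} while ensuring that each rung finally placed meets the rest of the structure only at its two designated endpoints. Controlling this interference across infinitely many levels at once is what forces the construction to be intricate, and is the reason that even for countable degree only the elaborate arguments of Halin and Diestel are known.
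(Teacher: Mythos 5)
There is a genuine gap here, and your own closing paragraph points at it without closing it. (Note also that the paper does not prove this theorem at all; it is quoted as a classical result of Halin, with a short proof later given by Diestel.) Your plan fixes the disjoint rays $R_0,R_1,R_2,\dots$ at the outset and insists that the horizontal rays of the grid be \emph{tails} of these, with all disjointness conflicts to be resolved by discarding finite initial segments and thinning the rung families. This cannot work in general: the disjoint $R_n$--$R_{n+1}$ paths may all be forced to meet some other ray $R_m$, and since infinitely many pairwise disjoint such paths each meet $R_m$ in at least one vertex, they meet $R_m$ in an unbounded set of positions along $R_m$ --- so \emph{every} tail of $R_m$ meets all but finitely many of them, and the ``admissible'' rung family on level $n$ becomes finite no matter which tails you pass to. Concretely, let $G$ be the hexagonal quarter grid itself and let $R_0,R_1,R_2$ be its rows $0$, $2$, $1$ in that order: every $R_0$--$R_1$ path meets $R_2$, so no ladder between tails of $R_0$ and $R_1$ can coexist with a disjoint tail of $R_2$. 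The Erd\H{o}s--Szekeres extraction of a monotone ladder between two rays is fine, but it is not the engine of the theorem.

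What the known proofs do, and what your sketch omits, is precisely to \emph{not} commit to the initially chosen rays: the horizontal rays of the final grid are built together with the rungs as limits of finite approximations, and at each stage one reroutes along, absorbs into the structure, or replaces the rays that the connecting paths run into (this is where the real work of Halin's and Diestel's arguments lies). Your proposal substitutes for this the assertion that ``a standard back-and-forth bookkeeping then guarantees'' success, immediately followed by an acknowledgement that the interference across levels is the hard, unresolved part. As written, this is a plan for a proof rather than a proof; to complete it you would need either a lemma letting you reorder or re-select the rays so that consecutive ones can be linked avoiding all later ones, or a construction in which the grid's rays are themselves produced by the recursion rather than prescribed in advance.
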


For ends of uncountable degree, however, the question is a 20-year-old open conjecture that Halin stated in his legacy collection of problems~\cite{halin2000miscellaneous}:

\begin{Hconj}[{\cite[Conjecture~6.1]{halin2000miscellaneous}}]
Every graph contains ray graphs for all its ends.
\end{Hconj}

\noindent In this paper, we settle Halin's conjecture:
partly positively, partly negatively, with the answer essentially only depending on the degree of the end in question.

\subsection{Our results}  
If the degree  in question is $\aleph_1$, then any  ray graph for  such  an end contains a vertex of degree~$\aleph_1$, which together with its neighbours already forms a ray graph for the end  in question, namely an `$\aleph_1$-star of rays'.
Thus, finding in $G$ a  ray graph for an end of degree $\aleph_1$ reduces to finding such a star of rays.
Already this case has  remained  open.

Let $\HC(\kappa)$ be the statement that Halin's conjecture holds for all ends of degree $\kappa$ in any graph.
As our first main result, we show that

\begin{align*}
\HC(\aleph_1)\textit{ fails.}
\end{align*}

\noindent So, Halin's conjecture is not true after all.
But we do not stop here, for the question whether $\HC(\kappa)$ holds remains open for end degrees $\kappa>\aleph_1$.
And surprisingly, we show that $\HC(\aleph_2)$ holds.
In fact, we show more generally that
\begin{align*}
\HC(\aleph_n)\textit{ holds for all }n\textit{ with }2\le n\le\omega.
\end{align*}

\noindent Interestingly, this includes the first singular uncountable cardinal~$\aleph_\omega$.
Having established these results, it came as a surprise to us that
\begin{align*}
\HC(\aleph_{\omega+1})\textit{ fails.}
\end{align*}

\noindent How does this pattern continue? It turns out that from this point onward, set-theoretic considerations start playing a role. 
Indeed 
\begin{align*}
\HC(\aleph_{\omega+n})\textit{ is undecidable for all }n\textit{ with }2\le n\le\omega,\textit{while }\HC(\aleph_{\omega\cdot 2 +1})\textit{ fails.}
\end{align*}
\noindent The following theorem solves Halin's problem for all end degrees:

\begin{mainresult}\label{thm_mainresult} The following two assertions about $\HC(\kappa)$ are provable in \ZFC:
\begin{enumerate}[label=$(\arabic*)$]
\item\label{item1}   
$\HC(\aleph_n)$ holds for all $2 \leq n \leq \omega$, 
\item\label{item_ZFCcounterexample}  $\HC(\kappa)$ fails for all $\kappa$ with $\cf\p{\kappa} \in \set{\mu^+}:{\cf\p{\mu} = \omega}$; in particular, $\HC(\aleph_1)$ fails.
\end{enumerate}
Furthermore, the following assertions about $\HC(\kappa)$ are consistent:
\begin{enumerate}[label=$(\arabic*)$]
\setcounter{enumi}{2}
\item\label{item_GCHresults} Under \GCH, $\HC(\kappa)$ holds for all cardinals not excluded by \ref{item_ZFCcounterexample}.
\item\label{item_consistentcounterexamples} However, for all $\kappa$ with   $\cf\p{\kappa} \in \set{\aleph_{\alpha}}:{\omega<\alpha<\omega_1}$ it is consistent with $\ZFC {+} \CH$ that $\HC(\kappa)$ fails, and similarly also for all $\kappa$ strictly greater than 
the  least
cardinal $\mu$ with $\mu = \aleph_\mu$.
\end{enumerate}
\end{mainresult}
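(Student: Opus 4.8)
The plan is to extract a purely combinatorial core from $\HC(\kappa)$ and then to analyse that core by cardinal arithmetic and PCF theory. The starting observation generalises the $\aleph_{1}$-reduction noted above: by a ball-growth argument (the $n$-th ball around a vertex in a graph of degrees ${<}\,\kappa$ stays of size ${<}\,\kappa$ when $\kappa$ is regular), a \emph{connected} ray graph on $\kappa$ rays with $\kappa$ regular must contain a vertex of degree $\kappa$, so for regular $\kappa$ the task reduces to locating a single $\kappa$-star of rays. For \emph{singular} $\kappa$ this reduction breaks down, and instead one seeks a connected ray graph whose degrees grow cofinally, i.e.\ a ``caterpillar'' along a cofinal sequence of order type $\cf(\kappa)$. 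Either way, since any two equivalent rays already admit infinitely many disjoint paths, realising individual edges is free; the entire difficulty is to realise the chosen edges \emph{simultaneously} by one family of internally disjoint $\mathcal{R}$-paths of the correct connected shape. The first step is therefore a reduction lemma recording, for each ray, the countably many ``levels'' at which it can emit disjoint path-bundles, turning the star/caterpillar problem into a statement $\Phi(\cf(\kappa))$ about an auxiliary set system; crucially the answer depends only on $\cf(\kappa)$.

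The dividing line then comes from the object that obstructs $\Phi$: an increasing sequence of length $\cf(\kappa)$, cofinal modulo the ideal of bounded sets in the order attached to the levels. Such a sequence exists in $\ZFC$ precisely when $\cf(\kappa)=\mu^{+}$ with $\cf(\mu)=\omega$, because a limit of countable cofinality is exactly where one can keep the recursion going by taking pseudo-unions: at $\mu=\aleph_{0}$ this is a classical $\omega_{1}$-tower in $\mathcal{P}(\omega)/\mathrm{fin}$, and at singular $\mu$ it is a Shelah PCF scale of length $\mu^{+}$ in a product cofinal in $\mu$. Feeding such a sequence into the reduction lemma forces every candidate centre to exhaust its countably many levels after countably many arms, so no $\kappa$-star (and, for singular $\kappa$, no caterpillar of full width) can be assembled, while the end still has degree $\kappa$. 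This yields item~\ref{item_ZFCcounterexample}, with $\HC(\aleph_{1})$ the instance $\mu=\aleph_{0}$.

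For the positive $\ZFC$ results~\ref{item1} the point is the provable \emph{absence} of this obstruction. For regular $\aleph_{n}$ with $2\le n<\omega$ the predecessor $\aleph_{n-1}$ has uncountable cofinality, so the pseudo-union step fails at every limit and no obstructing sequence of length $\aleph_{n}$ can be built; I would then manufacture the $\aleph_{n}$-star directly by a $\Delta$-system/elementary-submodel argument that stabilises the level-traces on an $\aleph_{n}$-sized subfamily and routes disjoint bundles into a common centre (the required infinite bundles being supplied by Halin's grid theorem, Theorem~\ref{thm_halin}). The genuinely delicate case, and what I expect to be the main obstacle, is the singular degree $\aleph_{\omega}$: here $\cf(\aleph_{\omega})=\omega$ sits on the singular side where PCF obstructions typically live, yet $\HC(\aleph_{\omega})$ is claimed to \emph{hold} in $\ZFC$. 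The saving feature is that $\omega$ is a \emph{limit} cardinal and hence not of the bad form $\mu^{+}$, so no obstructing $\omega$-sequence exists; I would prove $\Phi(\omega)$ by decomposing $\aleph_{\omega}=\sup_{n}\aleph_{n}$, building ray graphs of width $\aleph_{n}$ on successive regular pieces, and patching them into one connected caterpillar of width $\aleph_{\omega}$ whose degrees climb through the $\aleph_{n}$. Controlling the interaction between the pieces so that the patched linking system stays internally disjoint is the technical heart.

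Finally, items~\ref{item_GCHresults} and~\ref{item_consistentcounterexamples} are where the problem becomes explicitly independent. Under $\GCH$ all relevant products have the smallest possible cofinalities, so no obstructing sequence exists except at the $\ZFC$-forced cofinalities $\mu^{+}$ with $\cf(\mu)=\omega$; hence $\Phi(\cf(\kappa))$ holds for every $\kappa$ not excluded by~\ref{item_ZFCcounterexample}, giving~\ref{item_GCHresults}. For the consistent counterexamples in~\ref{item_consistentcounterexamples} I would conversely \emph{create} an obstructing scale by forcing: to get failure at $\cf(\kappa)=\aleph_{\alpha}$ with $\omega<\alpha<\omega_{1}$ one blows up the power of a suitable singular to install a scale of that length in an extension adding no reals, so that $\CH$ survives; above the least $\aleph$-fixed point the abundance of singulars of cofinality $\omega$ makes such scales consistently plentiful. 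The subtle points I anticipate here are arranging the forcing to add no reals (to keep $\CH$) and verifying that the forced scale genuinely yields $\neg\Phi$ rather than a mere cardinal-arithmetic artefact.
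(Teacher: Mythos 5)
Your high-level architecture (reduce to a combinatorial statement governed by $\cf(\kappa)$, locate the dividing line at successors of cardinals of countable cofinality, use PCF scales on the negative side) matches the paper's in outline, but the mechanism you propose for the counterexamples does not work, and this is the heart of the matter. You claim the obstruction is a $<_I$-increasing cofinal sequence, instantiated at $\kappa=\aleph_1$ by an $\omega_1$-tower in $\mathcal{P}(\omega)/\mathrm{fin}$. Such a sequence does not obstruct a $\kappa$-star of rays: if $\aleph_1$ disjoint rays of an end each send an infinite disjoint path family to a single countable vertex set $U$ --- no matter how their trace sets on $U$ are nested or almost disjoint --- then Lemma~\ref{lem_countablecore} already produces an $\aleph_1$-star (route a ray through a normal ray of a countable normal tree covering $U$, or through the dominating vertices of $U$, and discard the countably many combs that interfere). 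A genuine counterexample must defeat \emph{every} countable candidate centre, and for $\aleph_1$ this requires an idea absent from your proposal: the ray inflation of an Aronszajn tree with the Diestel--Leader--Todor\v{c}evi\'c property~(\ref{dlp}), under which everything lying above a node $t$ has only \emph{finite} attachment below $t$ (Theorem~\ref{thm_ATcounterexample}). For $\kappa=\lambda^+$ with $\lambda$ singular of countable cofinality your scale idea is close to the paper's (Proposition~\ref{ExamplesFromScales}), but you still owe the step from ``the bipartite attachment graph has no $(\aleph_0,\kappa)$-subgraph'' to ``no connected ray graph exists at all'': the paper needs the ray inflation of a $\lambda$-tree with tops and a separate argument (Theorem~\ref{thm_scalecounterexamples}) showing any putative star would force $\kappa$ path-interiors through a $\lambda$-sized vertex set.

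Two further gaps. First, your reduction to a statement depending only on $\cf(\kappa)$ silently assumes that a counterexample for regular $\kappa$ lifts to every cardinal of cofinality $\kappa$; this is true but is a substantial theorem (Theorem~\ref{thm_cofinalityupwards}, the longest proof in the paper), which must exclude both outcomes of Lemma~\ref{lem_fundamentalRayGraphs} and pull a $\kappa$-star back into the original graph. Second, on the positive side your ball-growth reduction and $\Delta$-system sketch omit the actual engine: a transfinite greedy construction (Lemma~\ref{lem_greedy}, needing $\cf(\kappa)>\aleph_1$) yielding a ${<}\kappa$-sized core with $\kappa$ attached combs, followed by the purely combinatorial analysis of when a $(\lambda,\kappa)$-graph contains an $(\aleph_0,\kappa)$-subgraph (Lemma~\ref{lem_reducing} and its corollaries); a direct $\Delta$-system argument is unavailable in \ZFC\ for $\aleph_n$ since $2^{\aleph_0}$ may exceed $\aleph_n$, and Halin's grid theorem plays no role in supplying the bundles. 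Your patching strategy for $\aleph_\omega$ is in the right spirit, though the paper obtains a genuine $\aleph_\omega$-star rather than a caterpillar by applying Lemma~\ref{lem_countablecore} to the countable union of the centre rays of the $\kappa_n$-stars.
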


\noindent For the consistency results in \ref{item_consistentcounterexamples} we do not presuppose any advanced set-theoretic knowledge beyond the usual concepts of ordinals, order trees, cardinals and cofinality. Rather, we identify in the literature a suitable combinatorial statement about tree branches that is known to be consistent, and then set out from there to construct our counterexamples for Halin's conjecture in the cases of \ref{item_consistentcounterexamples}. The proof sketch below offers a flavour which statements we need  precisely.

The affirmative results in \ref{item1} and \ref{item_GCHresults} are proved in the first half of this paper, up to Section~\ref{sec_affirm}. 
The  counterexample for $\HC(\aleph_1)$, which can be read independently of the other chapters, is constructed in Section~\ref{sec_counter}. The remaining counterexamples  
in \ref{item_ZFCcounterexample} and \ref{item_consistentcounterexamples} are constructed in Sections~\ref{sec_lambdakappaII} to \ref{sec_lifting}.

\subsection{Proof sketch}

The first step behind our affirmative results for $\HC(\kappa)$ is the observation that it suffices to find some countable set of vertices $U$ for which there is a set $\mathcal{R}$ of  $\kappa$ many rays in $\varepsilon$, all disjoint from each other and from $U$, such that each $R \in \mathcal{R}$ comes with an infinite family $\mathcal{P}_R$ of disjoint $R$--$U$ paths which, for distinct $R$ and  $R'$, meet only in their endpoints in  $U$: then it is not difficult to find a ray $R^*$ that contains enough of $U$  to include the endpoints of almost all path families $\mathcal{P}_R$, yielding a $\kappa$-star of rays on $\Set{R^*} \cup \mathcal{R}'$ for some suitable $\mathcal{R}' \subset \mathcal{R}$ (Lemma~\ref{lem_countablecore}). 

While it may be hard to identify a countable such set $U$ directly, for  $\kappa$ of cofinality at least $\aleph_2$ there is a neat greedy approach to finding a similar set $U'$ of cardinality just $<\kappa$ rather than $\aleph_0$ (Lemma~\ref{lem_greedy}). 

Let us illustrate the idea in the case of $\kappa  = \aleph_2$: Starting from an arbitrary ray $R_0$ in  $\eps$, does $U_0 = V(R_0)$ already do the job? 
That is to say,  are there $\aleph_2$    disjoint rays in $\eps$ that are independently attached to $U_0$ as above? 
If so, we have achieved our goal. If not, take a maximal set of  disjoint rays $\mathcal{R}_0$ in $\eps$ all whose rays are independently attached to $U_0$ as above, and define  $U_1$ to be the union of $U_0$ together with the vertices from all the  rays in  $\mathcal{R}_0$ and all their selected paths to $U_0$. Then $|U_1| \leq \aleph_1$. Does $U_1$ do the job? If so, we have achieved our goal. If not, continue as above. 
We claim that when continuing transfinitely and building sets $U_0 \subsetneq U_1 \subsetneq \ldots \subsetneq U_\omega \subsetneq U_{\omega+1} \subsetneq \ldots $, we will achieve our goal at some countable ordinal~$< \omega_1$.
For suppose not. Then $U'' := \bigcup \set{U_i}:{i < \omega_1}$ meets all the rays in $\varepsilon$. Indeed, any ray $R$ from  $\eps$  outside of $U''$ could be joined to $U''$ by an infinite family of  disjoint $R$--$U''$ paths. But then their countably many endvertices already belong to some $U_i$  for $i< \omega_1$, contradicting the maximality of $\mathcal{R}_{i}$ in the definition of $U_{i+1}$. Hence, $|U''| = \aleph_2$. For  cofinality reasons there is a  first index $j = i+1$ with $|U_j| = \aleph_2$. Now $U'=U_i$ is as required.

Having identified a ${<}\kappa$-sized set $U'$ together with $\kappa$ disjoint rays all independently attached to it, we aim  to restrict $U'$ to a countable set $U$ while keeping $\kappa$ many rays attached to $U$ (Section~\ref{sec_lambdakappa}). For $\kappa=\aleph_2$ this is straightforward, since if $U'$ is written as an increasing $\aleph_1$-union of countable sets, one of them already contains all the endpoints of the path systems for some $\aleph_2$-sized subcollection $\mathcal{R}' \subset \mathcal{R} $.
Take this countable set as the set $U$ originally sought. This completes the proof of $\HC(\aleph_2)$; the  other affirmative results for $\HC(\kappa)$ are similar (Corollary~\ref{cor_citeintro} and Theorem~\ref{thm_cof>omega1case}).

What about general cardinalities $\kappa$? The above strategy can fail in two different ways: First, if $\cf\p{\kappa} = \aleph_1$, the greedy approach may not terminate: for example, it may well be possible that $|U'| = \aleph_1$ while $|U_i| = \aleph_0$ for all $i < \omega_1$. 
And indeed, we will show that rays in ends of degree $\aleph_1$ may be `arranged like an Aronszajn tree',  witnessing  the failure of $\HC(\aleph_1)$. 
This idea can be captured as follows (see Definition~\ref{def_rayinflation} for precise details): 
For an Aronszajn tree  $T$, consider first a disjoint family of rays  $\set{R_t}:{t  \in T}$ indexed by the nodes of the tree. 
If $t$ is a successor of s in $T$, add an infinite matching between the rays $R_t$ and $R_{s}$. 
And if $t$ is a limit, pick a cofinal $\omega$-sequence 
$t_0 < t_1 < \ldots <  t$ of nodes below $t$, and add an  edge from the $n$th vertex of $R_t$ to the  $n$th vertex of $R_{t_n}$, for all $n \in \N$. 
If these cofinal sequences $t_0 < t_1 < \ldots <  t$ below each limit $t$ are chosen carefully
(for this we rely on a trick by Diestel, Leader and Todor\v{c}evi\'c, see~Theorem~\ref{thm_dl_Aronszajn_with_property}), 
the resulting graph, which we call the \emph{ray inflation of $T$}, is one-ended of degree~$\aleph_1$ but contains no $\aleph_1$-star of rays. This  refutes  $\HC(\aleph_1)$. See~Theorem~\ref{thm_ATcounterexample} for details.

What about the remaining cardinals $\kappa$ with $\cf\p{\kappa} = \aleph_1$? Also there, counterexamples to $HC(\kappa)$ exist, and we have a machinery that produces a multitude of such examples: 
Any counterexample for  $\HC(\kappa)$ for regular $\kappa$ may be turned canonically into a counterexample for  $\HC(\lambda)$ for all $\lambda$ with $\cf\p{\lambda} = \kappa$, see Theorem~\ref{thm_cofinalityupwards}.

The second way in which our above strategy can fail is that even if our greedy algorithm terminates and provides a ${<} \kappa$-sized $U'$ to which there are  $\kappa$  disjoint rays  independently attached, it may not be possible to further reduce $U'$ as earlier to some countable subset $U$. 
And indeed, using our idea of ray inflations of order trees, also this constellation can be exploited to construct counterexamples to Halin's conjecture. However, the trees that work now are quite different from the earlier Aronszajn trees: Generalising the concept of \emph{binary trees with tops} introduced by Diestel and Leader in \cite{DiestelLeaderNST}, we consider the class of \emph{$\lambda$-regular trees with tops}, where $\lambda$ is any singular cardinal of countable cofinality. 
These are order trees of height $\omega+1$ in which every point of finite height has exactly $\lambda$ successors, and above some $\kappa> \lambda$ many selected branches we add further nodes to the tree at height $\omega$, called \emph{tops}.

There is a reason why we take $\lambda$ to be singular  of countable cofinality: Just like the binary tree has uncountably many branches, these $\lambda$'s are the only other cardinals for which an uncountable regular tree is guaranteed to have strictly more than $\lambda$ branches. And just like the precise number of branches of the binary tree is not determined in ZFC alone (it is $2^{\aleph_0}$, which may be  $\aleph_1$ if \CH\ holds, or may be arbitrarily large), also the precise number of branches of the $\lambda$-regular tree is $\lambda^+$ if \GCH\ holds, but it also  may be  much larger.

Now the starting point for our consistent counterexamples of \ref{item_consistentcounterexamples} in Theorem~\ref{thm_mainresult} are simply models of $\ZFC + \CH$ in which the two $\lambda$-regular trees for $\lambda = \aleph_\omega$ and $\lambda$ equal to the first fixed point of the $\aleph$-function have a lot more branches than nodes. In these cases, any $\lambda$-regular tree with tops gives rise to counterexamples for  Halin's conjecture (Theorem~\ref{thm_scalecounterexamples}). What happens if one looks for \ZFC-counterexamples, not just consistent ones? With significantly more effort, and building on the concept of a \emph{scale} from Shelah's pcf-theory, we will show in Theorem~\ref{thm_MaxRewriteStefan} that for any singular $\lambda$ of countable cofinality one can directly select a suitable set of $\lambda^+$ many branches so that the $\lambda$-regular tree with corresponding tops gives rise to the counterexamples for Halin's conjecture, settling the remaining cases of \ref{item_ZFCcounterexample} in Theorem~\ref{thm_mainresult}.

\subsection{Open problems}

We suspect that \ref{item1} and \ref{item_ZFCcounterexample} in Theorem~\ref{thm_mainresult}  capture  all the cases of Halin's conjecture  that can be proved or disproved in \ZFC\ alone. This is certainly true up to $\aleph_{\omega_1}$, as for each $\kappa \leq \aleph_{\omega_1}$ our main Theorem~\ref{thm_mainresult} provides either a ZFC or an independence result regarding the  truth value of $\HC(\kappa)$. While for all remaining cardinals assertion~\ref{item_GCHresults} of Theorem~\ref{thm_mainresult} gives consistent affirmative results, we do not know whether any of these  can be established in ZFC.

\begin{ques} 
Is Halin's conjecture true for any $\kappa > \aleph_{\omega_1}$? 
\end{ques}
\begin{ques}
Is it true that any end of degree $\aleph_1$ 
either contains an $\aleph_1$-star of rays or a subdivision of a ray inflation of an Aronszajn tree, as  in Theorem~\ref{thm_ATcounterexample}?
\end{ques}

\begin{ques} Is it true  that for every cardinal $\kappa$ there is $f(\kappa) \geq  \kappa$, such that every end $\eps$ of degree $ f(\kappa)$ contains a connected ray graph of size $\kappa$?
\end{ques}

\section{Ray collections with small core}

For general notions in graph theory and in set theory we generally follow the textbooks by Diestel \cite{Bible} and Jech \cite{jech2013set}.
Let $\eps$ be an end of a graph $G$, and $U$ be a set of vertices in $G$. 
An $\eps$--$U$ \emph{comb} is a subgraph $C = R \cup \bigcup \script{P}$ of $G$ that consist of a ray $R$ disjoint from $U$ that represents~$\eps$ and an infinite family~$\script{P}$ of  disjoint $R$--$U$ paths. The vertices in $C \cap U$ are the \emph{teeth} of the comb. We write $\mathring{C}$ for $C - U$, the \emph{interior} of the $\eps$--$U$ comb, which is  
disjoint from $U$.
We call two $\eps$--$U$ combs \emph{internally disjoint} if they have disjoint interior.

\begin{lemma}
\label{lem_countablecore}
Let $\eps$ be an end of a graph $G$ and $U$ a countable set of vertices. 
If there is an uncountable collection $\mathcal{C}$ of internally disjoint $\eps$--$U$ combs in $G$, then $\eps$ contains a $|\mathcal{C}|$-star of rays whose leaf rays are the spines of (a subset of) combs in $\mathcal{C}$.
\end{lemma}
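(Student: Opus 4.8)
My plan is to isolate a single \emph{central ray} and then let the star assemble itself around it. Concretely, I aim to construct a ray $R^*$ representing $\eps$ whose vertex set meets $U$ in an infinite set $D=V(R^*)\cap U$, chosen so that some subfamily $\mathcal{C}'\subseteq\mathcal{C}$ with $|\mathcal{C}'|=|\mathcal{C}|$ has the property that every comb $C\in\mathcal{C}'$ has infinitely many of its teeth $T_C$ inside $D$. Before constructing $R^*$ I would record the basic observation that the teeth of any comb converge to $\eps$: since the $R$--$U$ paths of a comb are disjoint, only finitely many of them meet any finite vertex set, so all but finitely many teeth of each comb lie in the $\eps$-component of $G-S$ for every finite $S$. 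This is what lets me keep the central ray inside $\eps$.

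Granting such an $R^*$ and $\mathcal{C}'$, the star is immediate. Take $R^*$ as the centre and the spines $R_C$ $(C\in\mathcal{C}')$ as the leaves. For each $C$ let $\mathcal{Q}_C\subseteq\mathcal{P}_C$ be those comb paths whose tooth lies in $D$; by the choice of $\mathcal{C}'$ this is an infinite family of disjoint paths, each running from $R_C$ to a vertex of $D\subseteq V(R^*)$. Such a path meets $R^*$ only in its tooth: its interior lies in $\mathring{C}$, which is disjoint from $U$ and --- by internal disjointness together with the fact that $R^*\setminus U$ will be routed through combs other than those in $\mathcal{C}'$ --- disjoint from $R^*$. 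For distinct $C,C'$ the paths in $\mathcal{Q}_C$ and $\mathcal{Q}_{C'}$ have disjoint interiors and can share only teeth, which lie on $R^*$; hence paths from distinct families meet only in $R^*$. As the spines are disjoint from $U$ and from one another, all rays involved are pairwise disjoint, and we obtain the desired $|\mathcal{C}|$-star of rays.

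The heart of the matter is thus the construction of $R^*$. Since $U$ is countable while $\mathcal{C}$ is uncountable, the teeth are forced to be shared heavily, and I would build $R^*$ by a zigzag: choosing teeth $w_0,w_1,\dots$ to thread and, for each $n$, a \emph{bridge} comb $C^{(n)}$ with $w_n,w_{n+1}\in T_{C^{(n)}}$, I route $R^*$ from $w_n$ to $w_{n+1}$ along the two corresponding $R$--$U$ paths of $C^{(n)}$ and a segment of its spine. Taking the $C^{(n)}$ distinct, hence internally disjoint, keeps $R^*$ free of repeated vertices, and pushing the $w_n$ and the bridge segments progressively into smaller $\eps$-components --- possible because teeth converge to $\eps$ --- makes $R^*$ a ray in $\eps$. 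Only countably many combs are spent as bridges, so uncountably many remain available as leaves.

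The delicate point, and the main obstacle, is to thread while simultaneously securing the capturing condition: the threaded set $D=\{w_n\}$ must be met infinitely often by $|\mathcal{C}|$ many leftover combs. The two demands pull in opposite directions --- threadability wants consecutive $w_n$ to be co-teeth of a common bridge comb, whereas capturing wants the $w_n$ to be teeth of many \emph{other} combs. I would reconcile them by interleaving a pigeonhole step into the zigzag: at each stage drawing the next tooth from among those shared by a full-sized subfamily still in play (such a tooth exists because $U$ is countable), and maintaining that subfamily so that, in the limit, $|\mathcal{C}|$ many combs meet $D$ infinitely. Arranging this bookkeeping so that the surviving subfamily keeps size $|\mathcal{C}|$, rather than collapsing to a small pseudo-intersection, is where I expect the real work to lie.
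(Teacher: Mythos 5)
Your reduction is sound: once you have a ray $R^*$ in $\eps$ such that $|\mathcal{C}|$ many combs each have infinitely many teeth on $R^*$ and interiors disjoint from $R^*$, the star of rays falls out exactly as you describe. But constructing such an $R^*$ is the entire content of the lemma, and your sketch of that step does not go through. The zigzag with nested ``full-sized subfamilies still in play'' is precisely the pseudo-intersection problem you flag at the end, and it is a genuine obstruction, not a bookkeeping chore: after $\omega$ stages the combs guaranteed to have \emph{all} the $w_n$ as teeth form $\bigcap_n\mathcal{F}_n$, and a decreasing $\omega$-chain of uncountable families can have empty intersection, while a comb that survives only finitely many stages is only guaranteed finitely many teeth in $D$ (tooth sets can form an almost disjoint family, so a badly chosen infinite $D$ really can meet only countably many $T_C$ infinitely). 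There is even a leak in the single pigeonhole step when $|\mathcal{C}|$ has countable cofinality (e.g.\ $\aleph_\omega$): writing $\mathcal{F}=\bigcup_{u\in U}\{C\in\mathcal{F}\colon u\in T_C\}$ as a countable union need not yield a piece of full size. So as written the argument proves the lemma only modulo its hardest step.

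The missing idea is that no careful choice of $D$ is needed, because $R^*$ can be taken to contain essentially \emph{all} of $U$ at once. Two ways to see this. (i) Discard the countably many combs having a tooth that is a tooth of only finitely many combs; every remaining tooth $u$ is then the centre of a subdivided infinite star with leaves on spines of combs in $\eps$, i.e.\ $u$ dominates $\eps$, and a countable set of vertices all dominating the same end lies on a single ray of that end (this is the standard fact cited in the paper from Diestel's exercises on dominating vertices). (ii) Alternatively, take a countable normal tree $T\supseteq U$ via Jung's theorem; after discarding the countably many combs whose interiors meet $T$, normality forces all teeth of every remaining comb onto the unique normal ray of $T$ representing $\eps$. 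Either way a single ray captures all relevant teeth simultaneously, the final thinning (making comb interiors disjoint from $R^*$) costs only countably many further combs, and no transfinite bookkeeping or bridge combs are needed. I would replace the zigzag by one of these two arguments.
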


As this lemma  is fundamental  to our  affirmative results, we provide two proofs:  one relying on the theory of  normal trees, and a second, more elementary proof, using dominating vertices. 
Recall that a rooted tree $T \subset G$ is \emph{normal (in $G$)} if the endvertices of any $T$-path in $G$ (a non-trivial path in $G$ with endvertices in $T$ but all edges and inner vertices outside of $T$) are comparable in the tree order of $T$. 
Thus, if $T\subset G$ is normal, then the neighbourhoods of the components of $G-T$ form chains in~$T$.
The rays of $T$ starting in the root are called  \emph{normal rays}.

\begin{proof}[First proof of Lemma~\ref{lem_countablecore}]
We may assume that $G$ is connected.
Since $U$ is countable, by Jung's Theorem (see \cite[Satz~6]{jung1969wurzelbaume}, or the proof in \cite[Theorem~8.2.4]{Bible}), there is a countable normal tree $T\subset G$ that includes $U$. As $T$ is countable,  $\mathcal{C}$~contains a subcollection $\mathcal{C}'$  of size $\vert \mathcal{C} \vert$ such that the interior of every comb in $\mathcal{C}'$ is disjoint from $T$. 
As $T$ is normal, the teeth of any comb in $\mathcal{C}'$ lie on the unique normal ray $R$ in $T$ that represents $\eps$, see \cite[Lemma~8.2.3]{Bible}. 
Consequently, $R$ with all combs in $\mathcal{C}'$ forms the desired $\vert\mathcal{C}\vert$-star of rays.
\end{proof}

A vertex $v\in G$ \emph{dominates} a ray $R\subset G$ if $v$ is the centre of a subdivided infinite star with all its leaves in~$R$.
It \emph{dominates} an end $\eps$ of $G$ if it dominates some (equivalently:~each) ray in~$\eps$.

\begin{proof}[Second proof of Lemma~\ref{lem_countablecore}]
Denote by $U' \subset U$  the vertices of  $U$ that are  teeth of only finitely many combs in $\mathcal{C}$, and let $\mathcal{C}'\subset\mathcal{C}$ be the subcollection of combs with a  tooth in $U'$. 
Then $\mathcal{C}'$ is countable, so $\mathcal{C} \setminus \mathcal{C}'$  is of size  $\vert \mathcal{C} \vert$, and every comb in $\mathcal{C} \setminus \mathcal{C}'$ has all its teeth contained in~$U\setminus U'$. 
Since all vertices in $U \setminus U'$ dominate the  end $\eps$, there is a ray $R$ in $G$  with $U \setminus U' \subset V(R)$, cf.~\cite[Ex.~8.29 \& 8.30]{Bible}. Since $R$ is countable,  $\mathcal{C} \setminus \mathcal{C}'$ contains a subcollection $\mathcal{C}''$  of size $\vert \mathcal{C} \vert$ such that the interior of every comb in $\mathcal{C}''$ is disjoint from $R$. Consequently, $R$~together with all the combs in $\mathcal{C}''$ forms the desired $\vert\mathcal{C}\vert$-star of rays.
\end{proof}

Given an end $\eps$ of a graph $G$ with uncountable degree~$\kappa$, finding a countable vertex set $U\subset V(G)$ so that $G$ admits $\kappa$ internally disjoint $\eps$--$U$ combs (i.e.\ so that $U$ satisfies the premise of Lemma~\ref{lem_countablecore}) may be hard, and sometimes even impossible by Theorem~\ref{thm_mainresult}\ref{item1}.
Perhaps surprisingly, $U$ can be found greedily if $\kappa$ has large cofinality and the condition $\vert U\vert=\aleph_0$ is relaxed to $\vert U\vert<\kappa$.

\begin{lemma}[The greedy lemma]
\label{lem_greedy}
Let $\mathcal{R}$ be any $\kappa$-sized collection of disjoint rays belonging to  an end~$\eps$ of~$G$.  
If  $\cf\p{\kappa} > \aleph_1$, then there exist a set of vertices $U$ in $G$ with $|U| < \kappa$ and a $\kappa$-sized collection $\mathcal{C}$ of internally disjoint $\eps$--$U$ combs in $G$ with all spines in $\mathcal{R}$.
\end{lemma}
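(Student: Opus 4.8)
The plan is to run a transfinite greedy recursion of length $\omega_1$, exactly along the lines sketched in the introduction. Fix an arbitrary ray $R_0\in\mathcal{R}$ and set $U_0:=V(R_0)$. Given $U_i$ with $|U_i|<\kappa$, I ask whether $G$ admits $\kappa$ internally disjoint $\eps$--$U_i$ combs with spines in $\mathcal{R}$. Internally disjoint combs have disjoint interiors and each spine $R$ lies inside its comb's interior, so distinct combs in such a family have distinct spines in $\mathcal{R}$; hence any such family has size at most $\kappa$. Thus either the answer is yes, in which case I stop and output $U:=U_i$ together with the witnessing family $\mathcal{C}$, or every family of internally disjoint $\eps$--$U_i$ combs with spines in $\mathcal{R}$ has size $<\kappa$. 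In the latter case I fix by Zorn's lemma a maximal such family $\mathcal{C}_i$ and set $U_{i+1}:=U_i\cup\bigcup_{C\in\mathcal{C}_i}V(C)$; at limits $\lambda<\omega_1$ I take unions $U_\lambda:=\bigcup_{i<\lambda}U_i$. As each comb is countable, $|\mathcal{C}_i|<\kappa$, and $\cf\p{\kappa}>\aleph_0$, the bound $|U_i|<\kappa$ is maintained through all $i<\omega_1$.

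It remains to show that this recursion terminates before stage $\omega_1$. Suppose not, so that $U_i$ is defined for all $i<\omega_1$, and put $U'':=\bigcup_{i<\omega_1}U_i$. Because $\cf\p{\kappa}>\aleph_1$, an $\omega_1$-indexed union of sets of size $<\kappa$ again has size $<\kappa$, so $|U''|<\kappa$. On the other hand I claim that every ray of $\mathcal{R}$ meets $U''$, which forces $|U''|\ge\kappa$ since the rays in $\mathcal{R}$ are pairwise disjoint --- the desired contradiction. To prove the claim, suppose some $R\in\mathcal{R}$ is disjoint from $U''$. As $R$ and $R_0$ are equivalent and $V(R_0)\subseteq U''$, there are infinitely many disjoint $R$--$U''$ paths; truncating each at its first vertex in $U''$ yields an infinite family $\mathcal{P}$ of disjoint $R$--$U''$ paths whose interiors avoid $U''$. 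Their countably many teeth lie in $U''=\bigcup_{i<\omega_1}U_i$, so by the regularity of $\omega_1$ they all lie in a single $U_i$. Then $D:=R\cup\bigcup\mathcal{P}$ is an $\eps$--$U_i$ comb with spine $R\in\mathcal{R}$.

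Crucially, every vertex of $D$ other than its teeth lies outside $U''$, whereas every comb in $\mathcal{C}_i$ has all its vertices in $U_{i+1}\subseteq U''$; hence $\mathring{D}$ is disjoint from the interior of each comb in $\mathcal{C}_i$, and $R$ is not already a spine there. Thus $\mathcal{C}_i\cup\Set{D}$ contradicts the maximality of $\mathcal{C}_i$, proving the claim and hence termination at some stage $i<\omega_1$, which supplies the required $U$ and $\mathcal{C}$.

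The main obstacle, and the reason for the truncation in the previous paragraph, is to guarantee that the newly discovered comb $D$ is internally disjoint from all the combs already collected in $\mathcal{C}_i$: a priori the paths linking $R$ to $U_i$ could run through the interiors of those combs. Cutting each linking path at its first vertex in $U''$ forces its interior off $U''\supseteq\bigcup_{C\in\mathcal{C}_i}V(C)$, which resolves this. The two uses of the hypothesis $\cf\p{\kappa}>\aleph_1$ are equally essential: it keeps the sets $U_i$ of size $<\kappa$ along the recursion and, decisively, it makes the final $\omega_1$-union $U''$ still have size $<\kappa$, which is what produces the contradiction; if instead $\cf\p{\kappa}=\aleph_1$ the recursion may genuinely run through all of $\omega_1$, which is precisely the obstruction underlying the $\aleph_1$-counterexample.
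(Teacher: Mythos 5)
Your proof is correct and follows essentially the same greedy transfinite recursion as the paper: maximal families of internally disjoint combs, union over $\omega_1$ stages, and the cofinality hypothesis to derive a contradiction. The only differences are cosmetic --- you stop as soon as a $\kappa$-sized family appears (so $|U_i|<\kappa$ is maintained throughout), whereas the paper runs all $\omega_1$ stages and then locates the first successor stage at which size $\kappa$ is reached; your explicit truncation step justifying internal disjointness of the new comb is a point the paper leaves implicit.
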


\begin{proof}
Let $U_0 := V(R)$ for an arbitrarily chosen ray $R \in \mathcal{R}$. 
Recursively construct a sequence $(U_i \colon i < \omega_1)$ of vertex sets of $G$ as follows: 
If $U_i$ is already defined, use Zorn's lemma to choose a maximal collection $\mathcal{C}_i$ of internally disjoint $\varepsilon$--$U_i$ combs with spines in $\mathcal{R}$, and let $U_{i+1} := U_i \cup \bigcup \mathcal{C}_i$. 
For a limit $\ell < \omega_1$, simply define $U_\ell := \bigcup_{i < \ell} U_i$.

Consider $U' := \bigcup_{i < \omega_1} U_i$. 
If $|U'| < \kappa$, then since $|\mathcal{R}| = \kappa$ there still exists an $\varepsilon$--$U'$ comb $C$ in $G$ with spine in $\mathcal{R}$. However, the countably many teeth from $C \cap U'$ belong already to some $U_i$ for $i < \omega_1$. But then the existence of $C$ contradicts the maximality of $\mathcal{C}_i$.

Hence,  $|U'| = \kappa$, and $\cf\p{\kappa} > \omega_1$ implies that there is a first $i < \omega_1$ such that $|U_i| = \kappa$ which must be a successor, say $i = j+1$. Then $U := U_j$ satisfies $|U| < \kappa$ and $\mathcal{C}:=\mathcal{C}_j$ is a $\kappa$-sized collection of $\eps$--$U$ combs as desired.
\end{proof}

For which cardinals $\kappa$ it is possible to bridge the gap between Lemma~\ref{lem_countablecore} and Lemma~\ref{lem_greedy} is discussed in Section~\ref{sec_lambdakappa} below. Before doing so, however, we use  Lemma~\ref{lem_countablecore} to  classify the minimal types of connected ray graphs in the next section.

\section{Typical types of ray graphs}

It is well known that every countable connected graph contains  a vertex of infinite degree or a ray. This carries over to larger cardinals as follows. Every connected graph on $\kappa$ many vertices, for regular uncountable $\kappa$, has a vertex of degree $\kappa$.
Indeed, consider the distance classes from any fixed vertex of the graph. Then there is a first distance class with $\kappa$ many vertices and the prior distance class contains a vertex of degree $\kappa$. We call a star with $\kappa$ many leafs a \emph{$\kappa$-star.}

A \emph{frayed  star} is a rooted tree in which all vertices are within distance 2 from the root.
The root is also called the \emph{centre} and the neighbours of the root are called the \emph{distributor vertices} of the frayed star. 
For a cardinal~$\kappa$ and a cofinal sequence $s=(\kappa_i\colon i<\cf (\kappa))$ in $\kappa$, a $(\kappa, s)$-\emph{star} is a frayed star such that the sequence of degrees of the distributor vertices corresponds to $s$. 
Note that if for a frayed star $S$ with $\kappa$ many leafs the  degrees of the distributor vertices form a cofinal sequence for $\kappa$, then $S$ contains a $(\kappa , s)$-star for any prescribed  $\cf(\kappa)$-sequence $s$ of $\kappa$ as a subgraph. 
A \emph{frayed comb} is a ray together with infinitely many  disjoint stars such that exactly one vertex of every star lies on the ray. The centres of these stars are the \emph{distributor vertices} and the leafs of the stars that do not lie on the ray are the \emph{teeth} of the frayed comb.

Recall that every connected graph on $\kappa$ many vertices contains a subdivided frayed star or a subdivided frayed comb with $\kappa$ many leafs or teeth, respectively. To see this we may assume the graph to be a tree $T$ rooted at $r$. Now, consider the size of every component of $T-r$. If there is one component $C$ of size $\kappa$ we view $C$ as a tree rooted at the neighbour of $r$ and continue in~$C$. If there is again and again a component of size $\kappa$ it is straightforward to find a frayed comb with $\kappa$ many teeth.  However, if all components are smaller than $\kappa$ and their size forms a cofinal sequence of $\kappa$ it is straightforward to find a subdivided frayed star with $\kappa$ many leafs. Finally, if all components are smaller than $\kappa$ and their size does not form a cofinal sequence in $\kappa$, there are at least $\kappa$ many components in which case we even find a star of size $\kappa$. See also \cite[Corollary~8.1]{gollin2018characterising} for a strengthening of the above results.

We now lift these basic facts to ray graphs. For  connected graph $H$, we say that the union of  disjoint rays $(R_h\colon h \in V(H))$ in some graph $G$ together with a path family $\mathcal{P}$ in $G$ is a \emph{$H$ graph of rays} in $G$ if the ray graph of $(R_h\colon h \in V(H))$ and $\mathcal{P}$ is $H$. If $X \subseteq V(H)$ is a set of vertices we say that the rays $(R_x \colon x \in X)$ are the \emph{$X$ rays} in $G$. If $H=S$ is a star with centre $c \in V(S)$ and leafs~$L$, then we say that $R_c$ is the centre ray and $(R_\ell \colon \ell \in L)$ are the leaf rays.

\begin{lemma}\label{lem_fundamentalRayGraphs}
Let $\eps$ be any end of a graph~$G$ of degree $\kappa$, and suppose that $\HC(\kappa)$ holds. 
\begin{enumerate}
    \item If $\kappa$ is regular and uncountable, then $G$ contains a $\kappa$-star of rays all belonging to~$\eps$.
    \item If $\kappa$ is singular and $s$ is any $\cf(\kappa)$-sequence of cardinals with supremum~$\kappa$, then $G$ contains either a $\kappa$-star of rays all belonging to~$\eps$ or a $(\kappa,s)$-star of rays all belonging to~$\eps$.
\end{enumerate}
\end{lemma}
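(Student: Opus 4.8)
The plan is to feed $\HC(\kappa)$ with the graph $G$ and end $\eps$ to obtain a connected ray graph $H$ for $\eps$ on a degree-witnessing set $\mathcal{R}$ of $\kappa$ pairwise disjoint rays, together with the system $\Pcal$ of independent $\mathcal{R}$-paths witnessing the edges of $H$, and then to read off the desired configuration by applying the structural facts about connected graphs on $\kappa$ vertices recalled above to $H$. The one preliminary observation I would record is that a path $R = X_0 X_1 \cdots X_m = R'$ in $H$ translates into infinitely many pairwise disjoint $R$--$R'$ paths in $G$: concatenate the families witnessing the individual edges $X_k X_{k+1}$, rerouting through disjoint finite segments of the intermediate rays. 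Crucially, the internal vertices of such concatenated paths lie only on the intermediate rays $X_1,\dots,X_{m-1}$ and inside members of $\Pcal$, hence avoid every ray of $\mathcal{R}$ not on the path.

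For the regular case I would invoke that every connected graph on $\kappa$ vertices, $\kappa$ regular uncountable, has a vertex of degree $\kappa$. Applied to $H$ this yields a ray $R\in\mathcal{R}$ with $\kappa$ neighbours $(R_i\colon i<\kappa)$ in $H$. Taking for each $i$ the infinitely many disjoint $R$--$R_i$ paths from $\Pcal$ as the family $\mathcal{P}_i$ and discarding all remaining paths gives a $\kappa$-star of rays: the families are internally disjoint and internally avoid the rays, so paths from distinct $\mathcal{P}_i,\mathcal{P}_j$ can only meet on $R$, and with this restricted path system the ray graph on $\Set{R}\cup\Set{R_i\colon i<\kappa}$ is exactly a $\kappa$-star.

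For the singular case I would apply the recalled dichotomy: $H$ contains a subdivided frayed comb with $\kappa$ teeth, or a subdivided frayed star with $\kappa$ leaves. In the comb case let $U$ be the union of the countably many rays forming the spine, so $U$ is countable; replacing each tooth by the first ray on its branch off the spine gives $\kappa$ distinct effective teeth on internally disjoint branches, each joined to its spine ray by the infinitely many disjoint $\Pcal$-paths of that edge, meeting $U$ only in their endvertices. Thus the effective teeth are the spines of $\kappa$ internally disjoint $\eps$--$U$ combs, and Lemma~\ref{lem_countablecore} produces a $\kappa$-star of rays. In the frayed star case, if the component sizes are not cofinal in $\kappa$ the recalled argument even returns a genuine $\kappa$-star in $H$ and we conclude as in the regular case; otherwise the distributor degrees are cofinal in $\kappa$, so after passing to a $(\kappa,s)$-substar we obtain a centre ray $C$ and distributor rays $(D_i\colon i<\cf(\kappa))$ with $D_i$ joined to $\kappa_i$ leaves, where again I replace each leaf by the first ray off its distributor so the leaf--distributor edges become genuine $H$-edges. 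For the possibly subdivided centre--distributor edges I would absorb the subdivision into the concatenated $C$--$D_i$ paths, whose internal vertices lie only on branch~$i$ and in $\Pcal$; since the branches of the frayed star are internally disjoint, these families for distinct $i$ meet only in $C$ and are internally disjoint from the leaf families. Taking as ray-vertices only $C$, the $D_i$ and the effective leaves, and as path system exactly the concatenated centre--distributor families and the leaf families, yields a configuration whose ray graph is precisely the $(\kappa,s)$-star, i.e.\ a $(\kappa,s)$-star of rays.

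The main obstacle is this last point: manufacturing a genuine depth-two $(\kappa,s)$-star of rays out of the subdivided frayed star, since the centre of a star of rays must be a single ray whereas the frayed star links the distributors to the centre only through disjoint branches of intermediate rays. The key is that the subdivisions along these branches can be hidden inside the interiors of the concatenated connecting paths without creating unwanted edges in the restricted ray graph, which works precisely because the branches are internally disjoint and the members of $\Pcal$ internally avoid all rays; the step requiring genuine care is verifying that enough pairwise disjoint concatenated $C$--$D_i$ paths exist and that the only edges surviving in the final ray graph are those of the $(\kappa,s)$-star.
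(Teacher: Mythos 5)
Your overall route is exactly the paper's: part (1) via the degree-$\kappa$ vertex of the connected ray graph, and part (2) via the frayed star / frayed comb dichotomy, un\-subdividing by abandoning degree-two rays into the interiors of concatenated path families and, in the comb case, finishing with Lemma~\ref{lem_countablecore}. The regular case, your preliminary concatenation observation, and the frayed star case (including the passage to a $(\kappa,s)$-substar and the absorption of the subdivided centre--distributor edges) are all sound and match the paper's argument.

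The one step that would fail is in your frayed comb case. You take $U$ to be the union of the spine rays only and then ``replace each tooth by the first ray on its branch off the spine''; this presupposes that the branches from the spine to distinct teeth diverge immediately at the spine, i.e.\ that the distributor vertices of the frayed comb lie on its spine. By the paper's definition a star of the comb may instead be attached to the spine through one of its leaves, with its centre (the distributor) off the spine; then all teeth of that star have the \emph{same} first ray off the spine, so your ``$\kappa$ distinct effective teeth'' collapse to only countably many, and the alternative of routing each tooth's paths onward to the spine through the distributor ray destroys the internal disjointness of the resulting $\eps$--$U$ combs (teeth of one star would share the distributor ray and its attaching paths in their interiors), so Lemma~\ref{lem_countablecore} no longer applies. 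The paper sidesteps this by first suppressing all degree-two vertices of the comb and then letting $U$ be the union of the rays corresponding to \emph{all} non-tooth vertices --- the spine \emph{and} the distributors, still countably many --- so that each tooth ray together with the $\Pcal$-family of its single incident edge is already an $\eps$--$U$ comb, and these are internally disjoint because independent $\Pcal$-paths meet only in endvertices lying in $U$. Your argument is repaired by exactly this one-line change.
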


\begin{proof}
As $\kappa$ is regular in assertion (1), any ray graph witnessing that $\HC(\kappa)$ holds is connected, and so has a vertex of degree $\kappa$. Clearly, any such vertex gives  rise to a $\kappa$-star of rays in $G$. 

For (2) we find either a frayed star or a frayed comb with $\kappa$ many leafs or teeth, respectively, in the connected ray graph. First, suppose that we obtain a subdivided  frayed star with $\kappa$ many leafs. This gives rise to a subdivided  frayed star of rays in $G$.  However, this can easily be turned into a $\kappa$-star of rays or a $(\kappa,s)$-star of rays, respectively. Indeed, any ray which has degree two in the ray graph might be abandoned to find a path family between its two neighbour rays (internally disjoint from all other rays and paths).
Second, suppose that we obtain a frayed comb of size $\kappa$ in the ray graph. Then there is a frayed comb of rays in $G$ and by a similar argument as above we may assume that the frayed comb has no vertices of degree two. Hence, there are only countably many vertices in the frayed comb that are not a tooth; denote by $U$ the set of all these vertices. Now, apply Lemma~\ref{lem_countablecore} to the set $ U'=\{V(R_u) \colon u \in U \}$  and the $\varepsilon$--$U'$ combs provided by the teeth rays to obtain a $\kappa$-star of rays.
\end{proof}

\section{On \texorpdfstring{$(\lambda,\kappa)$}{(lambda,kappa)}-graphs I}
\label{sec_lambdakappa}

A \emph{$(\lambda,\kappa)$-graph} is a bipartite graph $(A,B)$ with $|A| = \lambda$ and $|B| = \kappa$, where $\lambda<\kappa$ are infinite cardinals and every vertex $b \in B$ has infinitely many neighbours in $A$.

Next to their role in this paper,  $(\lambda,\kappa)$-graphs and in  particular  $(\lambda,\lambda^+)$-graphs also occur in the characterisation of the Erd\H{o}s-Hajnal colouring number \cite{bowler2019colouring} as well as in the characterisation of graphs with normal spanning trees \cite{pitz2020d}.
For a structural classification of $(\aleph_0,\aleph_1)$-graphs see \cite{BowlerGeschkePitzNST}.

In the following we investigate the question  which $(\lambda,\kappa)$-graphs contain an $(\aleph_0,\kappa)$-subgraph. 
Whenever  we speak of a $(\lambda',\kappa)$-subgraph of another $(\lambda,\kappa)$-graph in this paper, we silently  assume that the bipartition classes are respected. This is no significant restriction, since whenever a $(\lambda,\kappa)$-graph $(A,B)$ has a $(\lambda',\kappa)$-subgraph $(C,D)$, then it also has such a subgraph that respects the bipartition classes (simply restrict to the vertices in $D \setminus A$ and  their neighbours in $A \cap  C$).

\begin{lemma}
\label{lem_reducing}
Any $(\lambda,\kappa)$-graph with $\cf(\lambda)> \omega$ and $\cf\p{\kappa} \neq \cf\p{\lambda}$ contains a  $(\lambda',\kappa)$-subgraph for some $\lambda' < \lambda$.
\end{lemma}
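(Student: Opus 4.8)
The plan is to exploit $\cf(\lambda)>\omega$ to reduce the ``small'' side $A$ via a filtration, then use $\cf(\kappa)\neq\cf(\lambda)$ to retain $\kappa$ many vertices on the large side. Write $\mu:=\cf(\lambda)$ and fix an increasing continuous filtration $A=\bigcup_{i<\mu}A_i$ in which each $A_i$ is infinite and $|A_i|<\lambda$; this is possible precisely because $\mu=\cf(\lambda)$. For a vertex $b\in B$ I write $N(b)$ for its (infinite) neighbourhood in $A$.

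The first and conceptually central step is the claim that \emph{every} $b\in B$ satisfies $|N(b)\cap A_i|=\aleph_0$ for some $i<\mu$. I would argue by contradiction: if $N(b)\cap A_i$ were finite for all $i<\mu$, pick countably many distinct neighbours $a_0,a_1,a_2,\dots\in N(b)$, say $a_n\in A_{i_n}$. Since $\mu$ is regular and uncountable, the countable supremum $\beta:=\sup_n i_n$ is still ${<}\mu$, so all $a_n$ lie in $A_\beta$, making $N(b)\cap A_\beta$ infinite, a contradiction. This is exactly the point where $\cf(\lambda)>\omega$ is needed: for countable cofinality a vertex could genuinely spread its neighbours cofinally through the filtration while keeping a finite trace on every $A_i$, and no bounded piece would capture infinitely many of them.

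Granting the claim, I define $\phi\colon B\to\mu$ by $\phi(b):=\min\{\,i<\mu:|N(b)\cap A_i|=\aleph_0\,\}$ and set $X_\gamma:=\{\,b\in B:\phi(b)\le\gamma\,\}$, an increasing family with $\bigcup_{\gamma<\mu}X_\gamma=B$. The whole statement now reduces to finding a single $\gamma^*<\mu$ with $|X_{\gamma^*}|=\kappa$: for such $\gamma^*$ the set $A':=A_{\gamma^*}$ has size $\lambda':=|A_{\gamma^*}|<\lambda$ and, restricting each $b\in X_{\gamma^*}$ to its infinitely many neighbours in $A_{\phi(b)}\subseteq A'$, the pair $(A',X_{\gamma^*})$ is a $(\lambda',\kappa)$-subgraph (and it respects the bipartition classes after passing to the used neighbours, as permitted by the remark preceding the lemma).

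Finding $\gamma^*$ is where $\cf(\kappa)\neq\mu$ enters, through a short cofinality computation that I expect to be the other delicate point. Suppose toward a contradiction that $|X_\gamma|<\kappa$ for all $\gamma<\mu$. Since $\mu<\kappa$ and $\kappa=|B|=\bigl|\bigcup_{\gamma<\mu}X_\gamma\bigr|\le\mu\cdot\sup_{\gamma<\mu}|X_\gamma|$, the sizes $|X_\gamma|$ must be cofinal in $\kappa$, forcing $\cf(\kappa)\le\mu$; in particular the case $\cf(\kappa)>\mu$ is already impossible. If instead $\cf(\kappa)=\theta<\mu$, I fix a cofinal sequence $\kappa_\alpha\nearrow\kappa$ with $\alpha<\theta$, choose $\gamma_\alpha<\mu$ with $|X_{\gamma_\alpha}|\ge\kappa_\alpha$, and observe that $\gamma^*:=\sup_{\alpha<\theta}\gamma_\alpha<\mu$ because $\theta<\mu=\cf(\mu)$; then $|X_{\gamma^*}|\ge\sup_\alpha\kappa_\alpha=\kappa$, contradicting the assumption. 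Thus $\cf(\kappa)\neq\mu$ excludes both possibilities and yields the required $\gamma^*$. It is worth noting that this argument breaks in exactly the excluded case $\cf(\kappa)=\cf(\lambda)$: there an increasing $\mu$-sequence of sets may be cofinal in $\kappa$ without any term attaining size $\kappa$, so both halves of the dichotomy fail simultaneously, which is consistent with the hypothesis being sharp.
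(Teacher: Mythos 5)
Your proposal is correct and follows essentially the same route as the paper: filter $A$ as an increasing union of $<\lambda$-sized pieces indexed by $\cf(\lambda)$, use $\cf(\lambda)>\omega$ to trap infinitely many neighbours of each $b\in B$ in a single piece, and use $\cf(\kappa)\neq\cf(\lambda)$ to find one piece whose associated subset of $B$ has size $\kappa$. The only cosmetic differences are that the paper first reduces to countable neighbourhoods and demands $N(b)\subset A_i$ outright, and cites Jech's Lemma~3.7(ii) where you carry out the two-case cofinality computation by hand.
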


\begin{proof}
Let $(A,B)$  be a  $(\lambda,\kappa)$-graph as in  the lemma. We  may assume that  $N(b) \subseteq A$ is countable for all $b  \in B$. 
Write $A =\bigcup \set{A_i}:{i < \cf\p{\lambda}  }$ as an increasing union of subsets $A_i$ with $|A_i| <  \lambda$. Let $B_i :=\set{b\in B}:{N(b)  \subset A_i}$. Since $\cf(\lambda)> \omega$,  we  have $B =\bigcup \set{B_i}:{i < \cf\p{\lambda}  }$. 
Recall that if a non-decreasing $\gamma$-sequence in a limit ordinal $\alpha$ is cofinal in $\alpha$, then $\cf(\gamma)=\cf(\alpha)$, see  e.g.\ \cite[Lemma~3.7(ii)]{jech2013set}.
As $\cf\p{\kappa} \neq \cf\p{\lambda}$, we therefore have $|B_i| = |B|$  for some $i < \cf\p{\lambda}$. Then $(A_i,B_i)$ is the desired $(\lambda',\kappa)$-subgraph.
\end{proof}

\begin{cor}
\label{cor_aleph_0kappa}
Any $(\lambda,\kappa)$-graph with $ \kappa  \leq \aleph_\omega$ contains an $(\aleph_0,\kappa)$-subgraph. \qed
\end{cor}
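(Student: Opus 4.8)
The plan is to iterate Lemma~\ref{lem_reducing}, driving the $A$-side cardinality down to $\aleph_0$. I would argue by induction on $\lambda$, which is well-founded since the cardinals are well-ordered. Let $(A,B)$ be a $(\lambda,\kappa)$-graph with $\lambda<\kappa\le\aleph_\omega$. If $\lambda=\aleph_0$ there is nothing to prove, as $(A,B)$ is already an $(\aleph_0,\kappa)$-graph. So suppose $\lambda\ge\aleph_1$ and aim to apply Lemma~\ref{lem_reducing} once, then invoke the induction hypothesis on the resulting smaller subgraph.

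The heart of the matter, and the step I would treat most carefully, is checking that the two hypotheses of Lemma~\ref{lem_reducing} are satisfied automatically throughout this range. Because $\lambda<\kappa\le\aleph_\omega$, the cardinal $\lambda$ lies strictly below $\aleph_\omega$, so $\lambda=\aleph_n$ for some $1\le n<\omega$; being a successor cardinal it is regular, whence $\cf(\lambda)=\aleph_n>\omega$, giving the first hypothesis. For the second hypothesis $\cf(\kappa)\ne\cf(\lambda)$ I would split into two cases. If $\kappa=\aleph_\omega$, then $\cf(\kappa)=\omega<\aleph_n=\cf(\lambda)$. If instead $\kappa=\aleph_m$ with $m<\omega$, then $\kappa>\lambda$ forces $m>n$, and as both sides are regular successors $\cf(\kappa)=\aleph_m\ne\aleph_n=\cf(\lambda)$. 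Either way the hypotheses hold, so Lemma~\ref{lem_reducing} produces a $(\lambda',\kappa)$-subgraph with $\lambda'<\lambda$. Since $\lambda'<\lambda<\kappa\le\aleph_\omega$, the induction hypothesis applies to this subgraph and yields an $(\aleph_0,\kappa)$-subgraph, which is the desired subgraph of $(A,B)$. Equivalently, one may simply observe that iterating the lemma produces a strictly decreasing sequence of $A$-side cardinals, which must terminate; by the computation above it can only terminate once the $A$-side is countable.

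I do not anticipate a genuine obstacle beyond this verification, but it is exactly here that the hypothesis $\kappa\le\aleph_\omega$ earns its keep: every intermediate $A$-side cardinal below $\aleph_\omega$ is an \emph{uncountable regular} cardinal, so $\cf>\omega$ holds, while the scarcity of cardinals below $\aleph_\omega$ simultaneously forces $\cf(\kappa)$ to differ from it. Beyond $\aleph_\omega$ this coincidence fails---a singular $\lambda$ whose cofinality matches $\cf(\kappa)$ could arise---which is precisely why the corollary is stated only up to $\aleph_\omega$.
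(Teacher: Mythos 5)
Your proof is correct and is exactly the argument the paper intends: the corollary is stated with an immediate \qed because it follows by iterating Lemma~\ref{lem_reducing}, using precisely your observation that every infinite cardinal strictly below $\aleph_\omega$ is regular and uncountable, so both hypotheses of that lemma hold automatically until the $A$-side becomes countable. Nothing further is needed.
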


However, this pattern breaks down at the cardinal $\aleph_{\omega+1}$. In fact, we will show below that  for every singular cardinal $\lambda$ of countable cofinality, there exist $(\lambda,\lambda^+)$-graphs without $(\aleph_0,\lambda^+)$-subgraph, Theorem~\ref{thm_MaxRewriteStefan}.

In order to prove the affirmative results of item \ref{item_GCHresults} in our  main result Theorem~\ref{thm_mainresult}, we now show  that under \GCH\footnote{In fact, a close inspection shows that our results only require the consequence of GCH that $\lambda^{\aleph_0} =  \lambda^+$ for all singular cardinals $\lambda$ of countable cofinality.}, these are more or less the only exceptions:

\begin{lemma}
\label{lem_GCH}
Under \weakch, every $(\lambda,\kappa)$-graph with $\cf\p{\kappa} \notin \set{\mu^+}:{\cf\p{\mu} = \omega}$ and $\cf\p{\kappa} > \lambda >\aleph_0$ contains a  $(\lambda',\kappa)$-subgraph for some $\lambda' < \lambda$.
\end{lemma}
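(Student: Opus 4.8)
The plan is to split on the cofinality of $\lambda$ and to observe that only the singular case $\cf(\lambda)=\omega$ is genuinely new. Since $\cf(\kappa)>\lambda\ge\cf(\lambda)$ we always have $\cf(\kappa)\neq\cf(\lambda)$, so whenever $\cf(\lambda)>\omega$ the hypotheses of Lemma~\ref{lem_reducing} are already met, and that lemma produces the desired $(\lambda',\kappa)$-subgraph with no appeal to \GCH\ whatsoever. I would dispose of this case in one line and concentrate on $\cf(\lambda)=\omega$. As a harmless normalisation I first replace each $N(b)$ by a countably infinite subset of itself; if the union $A^{\ast}$ of these countable neighbourhoods has size $<\lambda$, then $(A^{\ast},B)$ is itself a $(\lambda',\kappa)$-subgraph and we are done, so I may assume $|A^{\ast}|=\lambda$ and relabel $A:=A^{\ast}$. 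Thus every $b\in B$ now carries a countably infinite neighbourhood $N(b)\in[A]^{\aleph_0}$.

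The heart of the argument is a pigeonhole count on the assignment $b\mapsto N(b)$, and it rests on two inputs. First, under \GCH\ and because $\cf(\lambda)=\omega$, the number of countable subsets of $A$ is $|[A]^{\aleph_0}|=\lambda^{\aleph_0}=\lambda^+$; this is the only place \GCH\ is used, and only through the identity $\lambda^{\aleph_0}=\lambda^+$ flagged in the footnote. Second, I extract from the hypotheses that $\cf(\kappa)$ lies strictly above $\lambda^+$: we are given $\cf(\kappa)>\lambda$, hence $\cf(\kappa)\ge\lambda^+$, while $\lambda^+$ with $\cf(\lambda)=\omega$ shows $\lambda^+\in\set{\mu^+}:{\cf(\mu)=\omega}$, a value explicitly forbidden for $\cf(\kappa)$. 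Therefore $\cf(\kappa)\neq\lambda^+$, and so $\cf(\kappa)>\lambda^+$.

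With these two facts in hand, the map $b\mapsto N(b)$ sends the $\kappa$-sized set $B$ into a range of only $\lambda^+$ possible values, where $\lambda^+<\cf(\kappa)$. Since a union of fewer than $\cf(\kappa)$ sets each of size $<\kappa$ has size $<\kappa$, some fibre $B':=\set{b\in B}:{N(b)=S}$ must have size $\kappa$. All $b\in B'$ then share the single countably infinite neighbourhood $S$, so $(S,B')$ is an $(\aleph_0,\kappa)$-subgraph, in particular a $(\lambda',\kappa)$-subgraph with $\lambda'=\aleph_0<\lambda$, completing the proof. The one step to watch is this final pigeonhole: it goes through precisely because \GCH\ pins $|[A]^{\aleph_0}|$ down to $\lambda^+$ while the cofinality exclusion lifts $\cf(\kappa)$ strictly past $\lambda^+$. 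Drop either hypothesis and $\lambda^{\aleph_0}$ may reach or exceed $\cf(\kappa)$, which is exactly the room in which the later counterexamples of Theorem~\ref{thm_MaxRewriteStefan} are constructed.
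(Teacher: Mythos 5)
Your proposal is correct and follows essentially the same route as the paper: dispose of $\cf(\lambda)>\omega$ via Lemma~\ref{lem_reducing}, and for $\cf(\lambda)=\omega$ use \GCH\ to bound $\left|[A]^{\aleph_0}\right|$ by $\lambda^+$, note that the cofinality exclusion forces $\cf(\kappa)>\lambda^+$, and pigeonhole the map $b\mapsto N(b)$ to extract a $\kappa$-sized fibre. Your write-up merely makes explicit a few details the paper leaves implicit (the normalisation to countable neighbourhoods and the verification that $\cf(\kappa)\neq\cf(\lambda)$ in the first case), so no changes are needed.
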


\begin{proof}
Let $(A,B)$  be a  $(\lambda,\kappa)$-graph as in  the lemma. We  may assume that  $N(b) \subseteq A$ is countable for all $b  \in B$. 
If $\cf\p{\lambda} > \omega$,  the assertion follows from Lemma~\ref{lem_reducing}.

If $\cf\p{\lambda} = \omega$, \weakch\ implies that  $|[A]^{\aleph_0}| =  \lambda^+$, see \cite[Theorem~5.15(ii)]{jech2013set}.
By assumption, $\cf\p{\kappa} > \lambda$  and $\cf\p{\kappa} \neq \lambda^+$, so $\cf\p{\kappa} > \lambda^+$. Hence, for some countable subset $A' \subseteq A$ we find a $\kappa$-sized $B' \subseteq B$ such that $N(b) = A'$  for all $b \in B'$. Then $(A',B')$ is the  desired subgraph.
\end{proof}

\begin{cor}
\label{cor_aleph_0regularkappa}
Under \weakch, every $(\lambda,\kappa)$-graph with $\cf\p{\kappa} \notin \set{\mu^+}:{\cf\p{\mu} = \omega}$ and $\cf\p{\kappa} > \lambda$ contains an $(\aleph_0,\kappa)$-subgraph.
\qed
\end{cor}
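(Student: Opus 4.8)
The plan is to obtain the corollary by iterating the single reduction step of Lemma~\ref{lem_GCH} and letting the well-ordering of the cardinals guarantee that the iteration bottoms out precisely at $\aleph_0$. First I would dispose of the degenerate case $\lambda = \aleph_0$, in which the given $(\lambda,\kappa)$-graph is already an $(\aleph_0,\kappa)$-graph and so serves as its own $(\aleph_0,\kappa)$-subgraph; hence I may assume $\lambda > \aleph_0$.

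For the main case, I would let $\mu$ be the least infinite cardinal for which the given graph contains a $(\mu,\kappa)$-subgraph. This is well-defined, since the graph itself witnesses that $\lambda$ belongs to the nonempty collection of such cardinals, and every nonempty set of cardinals has a least element. It then suffices to show $\mu = \aleph_0$, as the corresponding $(\mu,\kappa)$-subgraph is then the one sought.

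Suppose for contradiction that $\mu > \aleph_0$, and fix a witnessing $(\mu,\kappa)$-subgraph $(A',B')$. The crucial point is that $(A',B')$ again satisfies the hypotheses of Lemma~\ref{lem_GCH}: the condition $\cf\p{\kappa} \notin \set{\nu^+}:{\cf\p{\nu} = \omega}$ concerns only $\kappa$ and is untouched by passing to a subgraph, while $\mu \leq \lambda < \cf\p{\kappa}$ yields $\cf\p{\kappa} > \mu > \aleph_0$. Applying Lemma~\ref{lem_GCH} to $(A',B')$ therefore delivers a $(\mu',\kappa)$-subgraph with $\mu' < \mu$, contradicting the minimality of $\mu$. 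Hence $\mu = \aleph_0$, as required.

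I expect this argument to be essentially routine, which is why the corollary is stated as an immediate consequence of Lemma~\ref{lem_GCH}. The only step meriting attention is the preservation of the reduction hypotheses along the chain of subgraphs: because both conditions imposed on $\kappa$ are independent of the first parameter and the inequality $\cf\p{\kappa} > \lambda$ only relaxes as that parameter shrinks, a further application of Lemma~\ref{lem_GCH} remains legitimate at every cardinal strictly above $\aleph_0$. The well-ordering of the cardinals then forbids an infinite descending chain and forces the process to halt exactly at $\aleph_0$, since any larger value would still admit a reduction.
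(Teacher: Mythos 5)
Your proof is correct and matches the paper's intent: the corollary is stated with an immediate \qed precisely because one obtains it by iterating Lemma~\ref{lem_GCH} until the first parameter can no longer be reduced, and your formulation via the least cardinal $\mu$ admitting a $(\mu,\kappa)$-subgraph is a clean way to package that iteration, with the hypotheses on $\kappa$ correctly observed to be preserved throughout. Nothing further is needed.
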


\begin{cor}
\label{cor_aleph_0regular}
Under \weakch, every $(\lambda,\kappa)$-graph for regular $\kappa \notin \set{\mu^+}:{\cf\p{\mu} = \omega}$ contains an $(\aleph_0,\kappa)$-subgraph.
\qed
\end{cor}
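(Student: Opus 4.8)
The plan is to recognise the final statement, Corollary~\ref{cor_aleph_0regular}, as the immediate specialisation of the preceding Corollary~\ref{cor_aleph_0regularkappa} to the case where $\kappa$ is \emph{regular}, and then simply verify that the two hypotheses of Corollary~\ref{cor_aleph_0regularkappa} are automatically met. So the entire argument reduces to a hypothesis check, and no genuine construction is needed.

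First I would observe that for a regular cardinal $\kappa$ we have $\cf\p{\kappa} = \kappa$. Under this identity the side condition $\cf\p{\kappa} \notin \set{\mu^+}:{\cf\p{\mu} = \omega}$ appearing in Corollary~\ref{cor_aleph_0regularkappa} becomes exactly the hypothesis $\kappa \notin \set{\mu^+}:{\cf\p{\mu} = \omega}$ that we are given here. Thus the first of the two hypotheses of Corollary~\ref{cor_aleph_0regularkappa} transfers verbatim.

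Next I would dispose of the second hypothesis $\cf\p{\kappa} > \lambda$. Again using $\cf\p{\kappa} = \kappa$ for regular $\kappa$, this reads $\kappa > \lambda$, which is built into the very definition of a $(\lambda,\kappa)$-graph (there $\lambda < \kappa$ are required to be infinite cardinals). Hence this condition holds for free and requires no further work. With both hypotheses of Corollary~\ref{cor_aleph_0regularkappa} in place under \weakch, that corollary applies directly and produces the desired $(\aleph_0,\kappa)$-subgraph.

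I do not expect any real obstacle here: the statement is a pure instantiation of Corollary~\ref{cor_aleph_0regularkappa}, and the only point meriting a sentence of justification is the collapse $\cf\p{\kappa} = \kappa$ for regular $\kappa$, together with the remark that $\lambda < \kappa$ is part of the definition rather than an extra assumption to be established. This is exactly why the authors mark the corollary with \qed and omit a separate proof.
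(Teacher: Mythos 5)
Your proposal is correct and is precisely the argument the paper intends: the corollary is stated with \qed because it is the immediate specialisation of Corollary~\ref{cor_aleph_0regularkappa} to regular $\kappa$, using $\cf\p{\kappa}=\kappa$ and the fact that $\lambda<\kappa$ is part of the definition of a $(\lambda,\kappa)$-graph. Nothing further is needed.
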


\begin{lemma}
\label{lem_final}
Under \weakch, every $(\lambda,\kappa)$-graph with $\cf\p{\kappa} \notin \set{\mu^+}:{\cf\p{\mu} = \omega}$ contains an $(\aleph_0,\kappa)$-subgraph or otherwise a collection of disjoint $(\aleph_0,\kappa_i)$-subgraphs for $\set{\kappa_i}:{i < \cf\p{\kappa}}$ cofinal in $\kappa$.
\end{lemma}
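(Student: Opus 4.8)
The plan is to split into two cases according to the cofinality of $\kappa$.

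First I would handle the case $\cf(\kappa) > \lambda$. Here the hypotheses of Corollary~\ref{cor_aleph_0regularkappa} are met directly: we have $\cf(\kappa) \notin \set{\mu^+}:{\cf(\mu) = \omega}$ and $\cf(\kappa) > \lambda$, so that corollary immediately yields an $(\aleph_0,\kappa)$-subgraph. Thus in this regime the first alternative of the conclusion always holds, and there is nothing further to do. (One should double-check the boundary $\lambda = \aleph_0$: if $\lambda = \aleph_0$ the graph is already an $(\aleph_0,\kappa)$-graph, so we may assume $\lambda > \aleph_0$ and apply the corollary as stated.)

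The substantive case is $\cf(\kappa) \le \lambda$, where we must produce the \emph{second} alternative, namely disjoint $(\aleph_0,\kappa_i)$-subgraphs for a cofinal sequence $\set{\kappa_i}:{i < \cf(\kappa)}$ in $\kappa$. Let $(A,B)$ be the given graph, and as before assume each $N(b) \subseteq A$ is countable. The idea is to decompose $B$ along a cofinal sequence and treat each piece by the regular-cofinality results already proved. Concretely, fix a strictly increasing cofinal sequence of \emph{regular} cardinals $\set{\kappa_i}:{i < \cf(\kappa)}$ with supremum $\kappa$; since $\kappa$ is a cardinal this is possible (one may take each $\kappa_i$ to be a successor cardinal, hence regular, and not of the excluded form $\mu^+$ with $\cf(\mu)=\omega$ by choosing the $\kappa_i$ above the relevant fixed points, or more carefully by thinning the sequence). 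Using $|B| = \kappa = \sup_i \kappa_i$, partition $B$ into pieces $\set{B_i}:{i < \cf(\kappa)}$ with $|B_i| = \kappa_i$. For each $i$, the induced subgraph $(A, B_i)$ together with the neighbourhoods of vertices in $B_i$ is a $(\lambda_i, \kappa_i)$-graph for some $\lambda_i \le \lambda$ (restrict $A$ to $A_i := \bigcup_{b \in B_i} N(b)$). Now apply Corollary~\ref{cor_aleph_0regular} to each regular $\kappa_i$ to extract an $(\aleph_0, \kappa_i)$-subgraph $(A_i', B_i')$ inside $(A_i, B_i)$, with $A_i' \subseteq A$ countable and $B_i' \subseteq B_i$ of size $\kappa_i$.

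The remaining issue is \emph{disjointness} of these subgraphs. The sets $B_i'$ are automatically pairwise disjoint since they come from a partition of $B$; the countable sets $A_i'$ need not be. To repair this I would either (a) observe that overlaps on the bounded $A$-side are harmless and one can absorb them, or more cleanly (b) thin each $B_i'$ so that the associated countable $A$-supports become pairwise disjoint. For (b): the main obstacle is that there are $\cf(\kappa) \le \lambda$ many subgraphs but only $\lambda$-many potential $A$-vertices, so a greedy disjointification must ensure each $B_i'$ retains size $\kappa_i$. This works because each $b \in B_i'$ carries a countable neighbourhood, and one processes the indices $i$ in turn, at each stage discarding from $B_i'$ those (at most $<\kappa_i$ many, by a counting/regularity argument) vertices whose neighbourhoods meet the countably-many $A$-sets already committed at earlier stages. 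I expect step (b) — controlling the disjointification while preserving the exact cardinalities $\kappa_i$ — to be the main technical point, and it is where the regularity of the $\kappa_i$ and the countability of each $N(b)$ are both essential.
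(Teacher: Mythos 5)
Your first case ($\cf(\kappa) > \lambda$) is fine, but the second case contains a genuine gap — and in fact sets out to prove something false. For $\cf(\kappa) \le \lambda$ you try to \emph{always} produce the second alternative, but that alternative can genuinely fail to exist: take $\lambda = \aleph_2$, $\kappa = \aleph_{\omega_2}$ (so $\cf(\kappa)=\aleph_2$ is not an excluded cofinality), fix a countable $C \subset A$ and let $N(b) = C$ for every $b \in B$. Any $(\aleph_0,\kappa_i)$-subgraph must have its countable side meeting $C$ in an infinite set, and a countable set has no $\aleph_2$ pairwise disjoint infinite subsets, so no disjoint family as in the second alternative exists; the lemma holds here only because $(C,B)$ is itself an $(\aleph_0,\kappa)$-subgraph. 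This is exactly where your disjointification step (b) breaks: after committing a countable set $A_0''$ at an earlier stage, \emph{all} of $B_i'$ may consist of vertices whose neighbourhoods are almost contained in the already-committed sets, so the set of vertices you would have to discard can have full size $\kappa_i$. There is no counting or regularity argument that bounds it by $<\kappa_i$, and option (a) (tolerating overlaps) is not available since the lemma, and its application to Theorem~\ref{thm_cof>omega1case}, genuinely need disjointness.

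The missing idea is that the construction of the disjoint family must be driven by the hypothesis that the graph has \emph{no} $(\aleph_0,\kappa)$-subgraph. The paper assumes this first (otherwise the first alternative holds and we are done); then $\kappa$ must be singular by Corollary~\ref{cor_aleph_0regular}, and the pieces are chosen recursively rather than from an upfront partition of $B$: at stage $i$ one sets $A_i' := A \setminus \bigcup_{j<i} A_j$ and lets $B_i'$ be the unused vertices of $B$ with infinitely many neighbours in $A_i'$. If $|B_i'| < \kappa$, then every $b \in B \setminus B_i'$ has infinitely many neighbours in $\bigcup_{j<i} A_j$, a set of size $< \cf(\kappa)$, so Corollary~\ref{cor_aleph_0regularkappa} would produce an $(\aleph_0,\kappa)$-subgraph — contradiction. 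Hence $|B_i'| = \kappa$, one picks $\kappa_i$ many of its vertices and applies Corollary~\ref{cor_aleph_0regular} inside $(A_i', B_i'')$, and disjointness from all earlier pieces holds by construction. Your post-hoc thinning cannot be repaired without importing the no-$(\aleph_0,\kappa)$-subgraph assumption in essentially this interleaved way.
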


\begin{proof}
Let $(A,B)$ be some $(\lambda,\kappa)$-graph as in the statement of the lemma, but without an $(\aleph_0,\kappa)$-subgraph. 
By Corollary~\ref{cor_aleph_0regular}, $\kappa$ is singular and we may write $\kappa = \sup \set{\kappa_i}:{i <  \cf\p{\kappa}}$ for regular $\kappa_i > \lambda$ with $\kappa_i \notin \set{\mu^+}:{\cf\p{\mu} = \omega}$. Recursively choose disjoint $(\aleph_0,\kappa_i)$-subgraphs $(A_i,B_i)$ of $(A,B)$ for $i < \cf\p{\kappa}$ as follows: Given some fixed  $i < \cf\p{\kappa}$ with pairwise disjoint $(A_j,B_j)$ already selected for all $j < i$, let $$A'_i := A \setminus \medcup \set{A_j}:{j < i} \; \text{ and } \; B'_i := \set{b \in B \setminus \medcup \set{B_j}:{j < i}}:{|N(b) \cap A'_i| = \infty}.$$ 
If $|B'_i| < \kappa$, then since $|\bigcup \set{A_j}:{j < i} | < \cf\p{\kappa}$, the subgraph 
$$\p{\medcup \set{A_j}:{j < i} ,B \setminus B'_i}$$
of $(A,B)$ would contain an $(\aleph_0,\kappa)$-subgraph by Corollary~\ref{cor_aleph_0regularkappa}, contradicting our initial assumption. Therefore, we may choose $B''_i \subset B'_i$ of size $\kappa_i$, and apply Corollary~\ref{cor_aleph_0regular} to the graph $(A'_i,B''_i)$ and obtain an $(\aleph_0,\kappa_i)$-subgraph $(A_i,B_i)$ of $(A,B)$ disjoint from all  $(A_j,B_j)$ for $j < i$ as desired. 
\end{proof}

\begin{cor}
\label{rubens_cfomega_case}
Under \weakch, every $(\lambda,\kappa)$-graph with $\cf(\kappa) =\omega$   contains  an $(\aleph_0,\kappa)$-subgraph. 
\end{cor}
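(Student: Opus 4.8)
The plan is to reduce everything to the dichotomy already provided by Lemma~\ref{lem_final}. The first thing I would check is that its hypothesis is met: since $\cf(\kappa) = \omega = \aleph_0$ is the least infinite cardinal, it is a limit and in particular \emph{not} a successor cardinal, so $\cf(\kappa) \notin \set{\mu^+}:{\cf(\mu) = \omega}$. Hence Lemma~\ref{lem_final} applies to the given $(\lambda,\kappa)$-graph $(A,B)$ and returns one of its two outcomes.

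If $(A,B)$ already contains an $(\aleph_0,\kappa)$-subgraph, there is nothing left to do, so I would focus on the remaining case, in which Lemma~\ref{lem_final} yields a family of pairwise disjoint $(\aleph_0,\kappa_i)$-subgraphs $(A_i,B_i)$ for $i < \cf(\kappa) = \omega$, with $\set{\kappa_i}:{i < \omega}$ cofinal in $\kappa$. The idea is simply to amalgamate these countably many pieces: set $A^* := \bigcup_{i<\omega} A_i$ and $B^* := \bigcup_{i<\omega} B_i$. Each $A_i$ is countable and there are only countably many of them, so $\vert A^* \vert = \aleph_0$; the $B_i$ are pairwise disjoint with $\vert B_i\vert = \kappa_i$, and since $\set{\kappa_i}:{i<\omega}$ is cofinal in the (uncountable, countably cofinal, hence singular limit) cardinal $\kappa$, one gets $\vert B^*\vert = \sum_{i<\omega}\kappa_i = \kappa$. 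Finally, every $b \in B^*$ lies in a unique $B_i$ and, being a vertex of the $(\aleph_0,\kappa_i)$-subgraph $(A_i,B_i)$, already has infinitely many neighbours inside $A_i \subseteq A^*$. Thus $(A^*,B^*)$ is the desired $(\aleph_0,\kappa)$-subgraph.

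I do not expect a genuine obstacle here, as the real work has been front-loaded into Lemma~\ref{lem_final}. The only points deserving a moment's care are the two cardinality computations: that a countable union of countable sets remains countable, keeping the left side at $\aleph_0$, and that a disjoint union indexed by $\omega$ of bipartition classes whose sizes are cofinal in $\kappa$ attains total size $\kappa$, so that the right side reaches full size. Both are immediate from elementary cardinal arithmetic, and the corollary follows.
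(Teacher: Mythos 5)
Your argument is correct and is essentially identical to the paper's own proof: both invoke Lemma~\ref{lem_final} (noting that $\cf(\kappa)=\omega$ is not a successor cardinal, so the lemma applies) and, in the nontrivial case, take the union $(\bigcup_{i<\omega}A_i,\bigcup_{i<\omega}B_i)$ of the countably many disjoint pieces. The paper merely leaves the hypothesis check and the two cardinality computations implicit, which you spell out.
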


\begin{proof}
By Lemma~\ref{lem_final}, any $(\lambda,\kappa)$-graph with $\cf\p{\kappa} = \omega$ contains either an $(\aleph_0,\kappa)$-subgraph, in which case we are done, or otherwise a collection of disjoint $(\aleph_0,\kappa_i)$-subgraphs $(A_i,B_i)$ for $\set{\kappa_i}:{i < \omega}$ cofinal in $\kappa$. But then $(\bigcup_{i <  \omega} A_i,\bigcup_{i <  \omega} B_i)$ is an $(\aleph_0,\kappa)$-subgraph. 
\end{proof}

\section{Affirmative cases in Halin's conjecture}
\label{sec_affirm}

\subsection{Regular cardinals} Our  affirmative results for regular cardinals are now a straightforward consequence of Lemmas~\ref{lem_countablecore} and \ref{lem_greedy},  together with the results from the  previous section.

\begin{prop}
\label{thm_aleph_n}
Let $\mathcal{R}$ be  any $\kappa$-sized collection of disjoint equivalent rays in a graph~$G$ for an uncountable regular cardinal~$\kappa$. If $\kappa = \aleph_n$ for some $n \in \N$ with $n \geq 2$, then $G$ contains a $\kappa$-star of rays with leaf rays in $\mathcal{R}$. 

Assuming \weakch, there exists a $\kappa$-star of rays in $G$ with leaf rays in $\mathcal{R}$ unless $\kappa \in \set{\mu^+}:{\cf\p{\mu} = \omega}$.
\end{prop}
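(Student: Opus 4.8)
The plan is to chain together the greedy lemma, the $(\lambda,\kappa)$-graph results of the previous section, and finally Lemma~\ref{lem_countablecore}. Write $\eps$ for the end containing $\mathcal{R}$. Since $\kappa$ is regular and uncountable, in both assertions we in fact have $\kappa\geq\aleph_2$: for the first this is the hypothesis $n\geq2$, and for the second it holds because $\aleph_1=\aleph_0^+$ lies in the excluded set $\set{\mu^+}:{\cf(\mu)=\omega}$, so the proviso $\kappa\notin\set{\mu^+}:{\cf(\mu)=\omega}$ forces $\kappa>\aleph_1$. In particular $\cf(\kappa)=\kappa>\aleph_1$, so Lemma~\ref{lem_greedy} applies and produces a vertex set $U$ with $|U|<\kappa$ together with a $\kappa$-sized collection $\mathcal{C}$ of internally disjoint $\eps$--$U$ combs all of whose spines lie in $\mathcal{R}$.

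Next I would encode $\mathcal{C}$ as a bipartite graph. Put $A:=U$ and $B:=\mathcal{C}$, joining a comb $C\in B$ to a vertex $u\in A$ precisely when $u$ is a tooth of $C$. Because $U$ contains at least the full ray $V(R_0)$ from the start of the greedy construction, $\lambda:=|U|$ is an infinite cardinal with $\lambda<\kappa$, and since every comb has infinitely many teeth, each $b\in B$ has infinitely many neighbours in $A$. Hence $(A,B)$ is a $(\lambda,\kappa)$-graph (and if $\lambda=\aleph_0$ it is already an $(\aleph_0,\kappa)$-graph, so the next step is vacuous). The two assertions now diverge only in how one passes to a countable left-hand side: for $\kappa=\aleph_n$ with $2\leq n<\omega$ we have $\kappa\leq\aleph_\omega$, so Corollary~\ref{cor_aleph_0kappa} supplies an $(\aleph_0,\kappa)$-subgraph; under \GCH\ with $\kappa$ regular and $\kappa\notin\set{\mu^+}:{\cf(\mu)=\omega}$, Corollary~\ref{cor_aleph_0regular} does the same. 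Either way I obtain a countable set $A'\subseteq U$ and a subcollection $\mathcal{C}'\subseteq\mathcal{C}$ of size $\kappa$ such that every comb in $\mathcal{C}'$ still has infinitely many teeth in $A'$.

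It remains to relocate the core to $A'$ and finish. For each comb $C\in\mathcal{C}'$, keep only those of its $R$--$U$ paths whose endvertex lies in $A'$; there are infinitely many such paths, and together with the spine $R$ they form an $\eps$--$A'$ comb $C''$ with the same spine. Since $\mathring{C''}\subseteq\mathring{C}$ and the interiors $\mathring{C}$ were pairwise disjoint, the resulting $\kappa$-sized family $\Set{C''}$ is internally disjoint. Applying Lemma~\ref{lem_countablecore} to the countable set $A'$ and this family yields a $\kappa$-star of rays whose leaf rays are spines of combs in $\mathcal{C}'$, and these lie in $\mathcal{R}$, as required. The only genuine obstacle is the middle step of shrinking the core $U$ to a countable set while retaining $\kappa$ attached combs: this is exactly the point at which a $(\lambda,\kappa)$-graph can in principle fail to contain an $(\aleph_0,\kappa)$-subgraph (the mechanism behind the later counterexamples), and it is precisely Corollaries~\ref{cor_aleph_0kappa} and~\ref{cor_aleph_0regular} that rule out this failure in the two regimes at hand.
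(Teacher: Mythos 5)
Your proposal is correct and follows essentially the same route as the paper's proof: greedy lemma, then the $(\lambda,\kappa)$-graph reduction to an $(\aleph_0,\kappa)$-subgraph via Corollary~\ref{cor_aleph_0kappa} (respectively the \GCH\ corollaries), then Lemma~\ref{lem_countablecore}. The only cosmetic differences are that you phrase the intermediate object as an abstract bipartite incidence graph rather than a contraction minor, and you invoke Corollary~\ref{cor_aleph_0regular} where the paper uses Corollary~\ref{cor_aleph_0regularkappa}; both apply since $\kappa$ is regular.
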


\begin{proof}
Let $\mathcal{R}$ be  any collection of disjoint equivalent rays of regular cardinality $\kappa > \aleph_1$ belonging to some end $\eps$ of $G$. By Lemma~\ref{lem_greedy}, there exist a set of vertices $U$ in $G$ with $\lambda:= |U| < \kappa$ and a $\kappa$-sized collection $\mathcal{C}$ of internally disjoint $\eps$--$U$ combs in $G$ with spines in $\mathcal{R}$. 

Consider the $(\lambda,\kappa)$-minor $H=(U,\mathcal{C})$ of $G$ where we contract the interior of every $\eps$--$U$ comb  $C \in \mathcal{C}$. Applying Corollary~\ref{cor_aleph_0kappa} in the case where $\kappa = \aleph_n$ for some $n \in \N$ with $n \geq 2$, and Corollary~\ref{cor_aleph_0regularkappa} otherwise, we obtain that $H$ contains an $(\aleph_0,\kappa)$-subgraph $H'$. 
After uncontracting the combs on the  $\kappa$-side of~$H'$, applying Lemma~\ref{lem_countablecore} finishes the proof.
\end{proof}

\begin{cor}
\label{cor_citeintro}
$\HC(\aleph_n)$ holds for all $n \in \N$ with $n \geq 2$. 
Moreover, under \weakch, $\HC(\kappa)$ holds for all regular cardinals $\kappa \notin \set{\mu^+}:{\cf\p{\mu} = \omega}$. \qed
\end{cor}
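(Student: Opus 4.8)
The plan is to read the corollary off Proposition~\ref{thm_aleph_n}, together with the observation that, for regular uncountable $\kappa$, a $\kappa$-star of rays belonging to an end $\eps$ is already a ray graph for $\eps$. Indeed, the underlying ray graph of a $\kappa$-star of rays is the star $K_{1,\kappa}$ whose vertices are the central ray and its $\kappa$ leaf rays; the families of paths from the leaf rays to the central ray, meeting only in the central ray, exhibit this star as a connected ray graph, and its vertex set is a collection of $\kappa$ pairwise disjoint rays in $\eps$, hence a degree-witnessing subset of $\eps$ whenever $\deg(\eps) = \kappa$. So to establish $\HC(\kappa)$ for such $\kappa$ it suffices to exhibit, in an arbitrary graph $G$ with an end $\eps$ of degree $\kappa$, a $\kappa$-star of rays all belonging to $\eps$.

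First I would fix such a $G$ and $\eps$. By definition of the end degree there is a $\kappa$-sized collection $\mathcal{R}$ of pairwise disjoint rays in $\eps$. For the first statement, $\kappa = \aleph_n$ with $n \geq 2$ is regular and uncountable, so the unconditional half of Proposition~\ref{thm_aleph_n}, applied to $\mathcal{R}$, produces a $\kappa$-star of rays with leaf rays in $\mathcal{R}$; by the reduction above this settles $\HC(\aleph_n)$. For the second statement I would assume \weakch\ and take $\kappa$ regular with $\kappa \notin \set{\mu^+}:{\cf\p{\mu} = \omega}$, splitting on the size of $\kappa$. If $\kappa$ is uncountable, the conditional half of Proposition~\ref{thm_aleph_n} again yields the required $\kappa$-star of rays, the excluded case $\kappa \in \set{\mu^+}:{\cf\p{\mu} = \omega}$ being exactly the one forbidden by the hypothesis. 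If instead $\kappa \leq \aleph_0$, the conclusion is classical: ends of finite degree are handled directly, and $\HC(\aleph_0)$ follows from Halin's grid theorem (Theorem~\ref{thm_halin}).

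I do not expect a serious obstacle, since all the real work is carried by Proposition~\ref{thm_aleph_n}. The only points demanding a little care are bookkeeping ones: confirming that a $\kappa$-star of rays genuinely meets the definition of a connected ray graph on a degree-witnessing subset of $\eps$ (in particular that its leaf-to-centre path families constitute independent $\mathcal{R}$-paths meeting only in the central ray), and noticing that the small regular cardinals $\kappa \leq \aleph_0$ silently included in the second statement fall outside the scope of Proposition~\ref{thm_aleph_n}, which is stated only for uncountable regular $\kappa$, and so must be disposed of separately by the classical positive results.
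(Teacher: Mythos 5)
Your proposal is correct and is exactly the derivation the paper intends: the corollary is stated with an immediate \(\qed\), to be read off Proposition~\ref{thm_aleph_n} plus the observation that a $\kappa$-star of rays on a degree-witnessing collection is itself a connected ray graph for the end. Your extra care in disposing of the silently included case $\kappa=\aleph_0$ via Halin's grid theorem (Theorem~\ref{thm_halin}) is a legitimate and correctly handled bookkeeping point that the paper leaves implicit.
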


\subsection{Singular cardinals} We now extend these affirmative results to singular cardinals. First, to singular cardinals of countable cofinality, and then to all singular  cardinals whose cofinality is not a successor of a regular cardinal. By case \ref{item_ZFCcounterexample} of our main Theorem~\ref{thm_mainresult}, this is best possible.

\begin{prop}
\label{thm_cofomegacase}
$\HC(\aleph_\omega)$ holds. Moreover, under \weakch, $\HC(\kappa)$ holds whenever $\cf\p{\kappa} = \omega$.
\end{prop}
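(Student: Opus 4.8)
The plan is to reduce the singular case of countable cofinality to the regular cases already established: I would split the $\kappa$ disjoint rays of $\eps$ into countably many batches, produce a star of rays for each batch via the regular results, and then merge the resulting combs into a single family of internally disjoint $\eps$--$U$ combs with $U$ countable, to which Lemma~\ref{lem_countablecore} applies.

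First I would fix a cofinal sequence. For $\HC(\aleph_\omega)$ I take $\kappa_n := \aleph_n$ for $n \geq 2$; for the general statement under \weakch\ I choose regular $\kappa_n$ cofinal in $\kappa$ with $\kappa_n > \aleph_1$ and $\kappa_n \notin \{\mu^+ : \cf(\mu) = \omega\}$ (such a sequence exists, e.g. $\kappa_n = \nu_n^{++}$ for suitable $\nu_n \to \kappa$, since $\cf(\nu^+) = \nu^+ > \omega$). As $\kappa = \sum_n \kappa_n$ and $|\mathcal{R}| = \kappa$, I partition $\mathcal{R} = \bigsqcup_n \mathcal{R}_n$ with $|\mathcal{R}_n| = \kappa_n$. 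For each $n$ I apply Proposition~\ref{thm_aleph_n} to $\mathcal{R}_n$ (legitimate since $\kappa_n$ is of the form $\aleph_m$ with $m \geq 2$ in the first case, resp.\ regular and outside the excluded set under \weakch) to obtain a $\kappa_n$-star of rays with leaf rays in $\mathcal{R}_n$. Reading off its central ray $C_n$ and putting $U'_n := V(C_n)$, the leaf rays together with their path families form a collection $\mathcal{C}_n$ of $\kappa_n$ pairwise internally disjoint $\eps$--$U'_n$ combs, since the families meet only on $C_n$ and the leaf rays are disjoint.

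Next I would set $U := \bigcup_n U'_n$, a countable set, and regard each comb in $\mathcal{C}_n$ as an $\eps$--$U$ comb by truncating its paths at their first vertex in $U$; this preserves internal disjointness within each batch, and the spines, lying in the pairwise disjoint $\mathcal{R}_n$, are disjoint across batches. The core step is a bookkeeping argument selecting $\kappa$ pairwise internally disjoint combs from $\bigcup_n \mathcal{C}_n$: processing $n = 2, 3, \ldots$ in turn, I maintain a running family $\mathcal{D}$ of internally disjoint combs with used interior $Y$, and at stage $n$ add $\kappa_n$ combs of $\mathcal{C}_n$ whose interiors avoid $Y$. This is possible because the combs in $\mathcal{C}_n$ have pairwise disjoint interiors, so at most $|Y| \leq \sum_{k<n} \kappa_k \leq \kappa_{n-1} < \kappa_n$ of them can meet $Y$, leaving $\kappa_n$ survivors. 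The final $\mathcal{D}$ has size $\sum_n \kappa_n = \kappa$ and consists of internally disjoint $\eps$--$U$ combs, so Lemma~\ref{lem_countablecore} yields a $\kappa$-star of rays, which is a connected ray graph on a degree-witnessing subset of $\eps$; hence $\HC(\kappa)$.

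The main obstacle is internal disjointness across batches. The tempting approach of finding the batches recursively inside $G$ minus the vertices already used fails: deleting a set of size $\kappa_{n-1}$ can shatter $\eps$ into as many as $\kappa_n$ distinct ends of the smaller graph, none of degree $\kappa_n$, so the regular-case results no longer apply. I avoid this entirely by locating all batch stars inside the full graph $G$ and only afterwards thinning out clashing combs; the point that makes the thinning work is that each batch is \emph{already} internally disjoint, so a previously used set of size $<\kappa_n$ destroys fewer than $\kappa_n$ of its $\kappa_n$ combs. The only details I would still verify carefully are that Proposition~\ref{thm_aleph_n} indeed delivers the leaf rays inside the prescribed $\mathcal{R}_n$ (keeping spines disjoint across batches) and that the \weakch\ case admits a cofinal sequence of regular cardinals outside $\{\mu^+ : \cf(\mu) = \omega\}$.
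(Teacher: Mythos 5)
Your proposal is correct and follows essentially the same route as the paper: fix a cofinal $\omega$-sequence of regular cardinals of the form $\nu_n^{++}$ (so that they avoid the excluded set $\set{\mu^+}:{\cf\p{\mu}=\omega}$), obtain a $\kappa_n$-star of rays from Proposition~\ref{thm_aleph_n} for each $n$, let $U$ be the countable union of the centre rays, discard the fewer than $\kappa_n$ combs of each star that clash with $U$ or with earlier stars, and feed the resulting $\kappa$ internally disjoint $\eps$--$U$ combs into Lemma~\ref{lem_countablecore}. The only cosmetic difference is that you pre-partition $\mathcal{R}$ into batches to keep spines disjoint, whereas the paper lets the thinning step absorb this.
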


\begin{proof}
Suppose $\kappa > \cf\p{\kappa} = \omega$, and let
$(\kappa_n\colon n\in\omega)$ be a strictly increasing sequence of infinite cardinals with supremum $\kappa$, and each of the form $\kappa_n = \lambda_n^{++}$. 
Let $G$ be any graph and $\varepsilon$ an end of $G$ of degree $\kappa$. For each $n\in\omega$ we use Proposition~\ref{thm_aleph_n} to find a $\kappa_n$-star of rays~$S_n$ in $G$ with all rays in~$\eps$.
We write $R_n$ for the centre ray of~$S_n$ and consider the countable vertex set $U:=\bigcup_{n\in\omega}V(R_n)$.
For each $n\in\omega$, some $\kappa_n$ many of the components of $S_n$ are disjoint from both $\bigcup_{i<n}S_i$ and $U$.
Thus, the union of all stars $S_n$ contains $\kappa$ many internally disjoint $\eps$--$U$ combs.
Therefore, we may apply Lemma~\ref{lem_countablecore} to $\eps$ and $U$ to find the desired $\kappa$-star of rays. 
\end{proof}

\begin{theorem}
\label{thm_cof>omega1case} 
Under \weakch, $\HC(\kappa)$ holds for all $\kappa$ with  $\cf\p{\kappa} \notin \set{\mu^+}:{\cf\p{\mu} = \omega}$.
\end{theorem}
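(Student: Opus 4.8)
The plan is to reduce to the regular case through the cofinality $\delta := \cf(\kappa)$. If $\kappa$ is regular, or if $\delta = \omega$, the claim is already contained in Corollary~\ref{cor_citeintro} and Proposition~\ref{thm_cofomegacase}; since $\aleph_1 = \aleph_0^+$ lies in the excluded class $\set{\mu^+}:{\cf(\mu)=\omega}$ while $\delta$ by hypothesis does not, the only remaining case has $\kappa$ singular with $\delta$ regular uncountable, hence $\delta \geq \aleph_2 > \aleph_1$. Fix a $\kappa$-sized family $\mathcal{R} \subseteq \eps$ of disjoint rays. As $\delta > \aleph_1$, the greedy Lemma~\ref{lem_greedy} supplies a core $U$ with $|U| = \lambda < \kappa$ carrying a $\kappa$-sized family $\mathcal{C}$ of internally disjoint $\eps$--$U$ combs with spines in $\mathcal{R}$; contracting the comb interiors turns $(U,\mathcal{C})$ into a $(\lambda,\kappa)$-graph, to which I apply Lemma~\ref{lem_final}.

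If this yields an $(\aleph_0,\kappa)$-subgraph, then un-contracting produces a countable core meeting the hypothesis of Lemma~\ref{lem_countablecore}, which at once gives a $\kappa$-star of rays and we are done. So assume the second alternative: pairwise disjoint $(\aleph_0,\kappa_i)$-subgraphs $(A_i,B_i)$, $i<\delta$, with $\set{\kappa_i}:{i<\delta}$ cofinal in $\kappa$ and, by the construction in Lemma~\ref{lem_final}, each $\kappa_i$ regular with $\kappa_i > \lambda \geq \delta$ (the inequality $\lambda \geq \delta$ holds here, for otherwise Corollary~\ref{cor_aleph_0regularkappa} would have returned a single $(\aleph_0,\kappa)$-subgraph). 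I now aim to assemble a $(\kappa,s)$-star of rays for $s=(\kappa_i)$, which is a connected ray graph on a degree-witnessing set and hence witnesses $\HC(\kappa)$.

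The first step produces, for each $i$, a distributor ray $D_i$ together with $\kappa_i$ attached leaf rays, all living inside the group-$i$ \emph{territory} $T_i := A_i \cup \bigcup\set{\mathring{C}}:{C\in B_i}$. As in the second proof of Lemma~\ref{lem_countablecore}, discard the countably many combs whose teeth meet only finitely many combs; the surviving teeth $A_i^\infty \subseteq A_i$ then dominate $\eps$, and every remaining comb has all its teeth in $A_i^\infty$. Passing to a tooth-connected component retains $\kappa_i$ combs (as $\kappa_i$ is regular uncountable while $A_i^\infty$ is countable), and after setting aside a countable \emph{routing} subfamily $\Sigma_i$ chosen so that each $b\in A_i^\infty$ is a tooth of infinitely many combs of $\Sigma_i$, I thread $A_i^\infty$ onto a single ray $D_i \subseteq T_i$ by concatenating finite paths through the spines of $\Sigma_i$. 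The remaining combs, forming a leaf family $\Lambda_i$ of size $\kappa_i$, attach to $D_i$ through their teeth on $A_i^\infty$, yielding a $\kappa_i$-star $S_i$. Since the combs are internally disjoint and the $A_i$ are disjoint, the territories $T_i$ are pairwise disjoint; in particular the $D_i$ are pairwise disjoint and disjoint from every leaf ray.

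It remains to link the $\delta$ pairwise disjoint distributor rays $D_i$. Since $\delta$ is regular and not in the excluded class, Proposition~\ref{thm_aleph_n} (under \GCH) applies to any $\delta$-sized collection of disjoint equivalent rays; applied to $\set{D_i}:{i<\delta}$ it yields a $\delta$-star of rays with a fresh centre ray $R^*$ and leaf rays a cofinal subfamily $\set{D_i}:{i\in I}$, $|I|=\delta$. Grafting this global star onto the local stars $S_i$ ($i\in I$) gives the frayed star of rays with centre $R^*$, distributors $D_i$ and leaves $\Lambda_i$, i.e.\ the desired $(\kappa,s)$-star on $\sum_{i\in I}\kappa_i = \kappa$ rays. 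The one genuine obstacle is to make the global and local path families jointly independent: the centre $R^*$ and the global $R^*$--$D_i$ paths must avoid all leaf rays and all local attachment paths. I would overcome this by applying Proposition~\ref{thm_aleph_n} not in $G$ but in the subgraph $G^\circ := G - \bigcup_{i}\bigcup\set{\mathring{C}}:{C\in\Lambda_i}$ obtained by deleting all leaf-comb interiors. The distributor rays $D_i$, built purely from $\Sigma_i$ and the teeth, survive in $G^\circ$, whereas every leaf ray and every local $D_i$--leaf path runs through a deleted interior; hence any $\delta$-star found inside $G^\circ$ is automatically internally disjoint from all local paths and from all leaf rays, meeting the local structure only in the shared rays $D_i$, and the combined family is then readily checked to be a family of independent $\mathcal{R}$-paths with $\aleph_0$ disjoint paths on every edge. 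The delicate point, and the real crux, is exactly that the $D_i$ remain pairwise equivalent in $G^\circ$, so that Proposition~\ref{thm_aleph_n} is applicable there --- which is precisely what the choice of $\Sigma_i$ (keeping each tooth of $A_i^\infty$ dominating in $G^\circ$) is designed to secure.
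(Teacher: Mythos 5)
Your overall architecture matches the paper's: reduce to singular $\kappa$ with $\cf(\kappa)>\aleph_1$, invoke the greedy Lemma~\ref{lem_greedy}, apply Lemma~\ref{lem_final} to the contracted $(\lambda,\kappa)$-graph, build disjoint local $\kappa_i$-stars, link their centres by a $\cf(\kappa)$-star via Proposition~\ref{thm_aleph_n}, and assemble a frayed star. But your final step has a genuine gap, and you have in fact put your finger on it yourself: applying Proposition~\ref{thm_aleph_n} inside $G^\circ = G - \bigcup_i\bigcup\set{\mathring{C}}:{C\in\Lambda_i}$ requires the distributor rays $D_i$ to be pairwise \emph{equivalent in $G^\circ$}, and nothing you say establishes this. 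The connectivity between the disjoint territories $T_i$ and $T_j$ in $G$ may be routed entirely through the deleted leaf-comb interiors; your justification via ``each tooth of $A_i^\infty$ still dominates in $G^\circ$'' is circular, since domination presupposes that the rays in question already lie in a common end of $G^\circ$, which is exactly what is in doubt. The paper sidesteps this entirely: it applies Proposition~\ref{thm_aleph_n} to the centre rays \emph{in $G$}, where equivalence is automatic, and then handles independence by a trimming argument --- since the resulting $\cf(\kappa)$-star $S$ has only $\cf(\kappa)<\kappa_i$ vertices, one may discard from each local star $S_i$ the fewer than $\kappa_i$ leaf combs that $S$ meets, so that $S_i$ meets $S$ only in its centre ray and $S\cup\bigcup_i S_i$ is a connected ray graph.

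A secondary, likely repairable weakness is your construction of the local stars. The paper uses \GCH\ once more here: since $2^{\aleph_0}<\aleph_{\omega+1}<\kappa_i$ and $\kappa_i$ is regular, each $(\aleph_0,\kappa_i)$-subgraph contains a \emph{complete} bipartite $(\aleph_0,\kappa_i)$-subgraph, whence all spines remain equivalent within the union of those combs and Lemma~\ref{lem_countablecore} can be applied \emph{inside} that subgraph, automatically confining $S_i$ to its territory. Your replacement --- threading $A_i^\infty$ onto a single ray $D_i\subset T_i$ through a routing family $\Sigma_i$ --- cannot simply cite the second proof of Lemma~\ref{lem_countablecore}, which produces its ray in all of $G$; building such a ray inside $T_i$ needs an explicit back-and-forth argument through the tooth-connected component, which you only gesture at. Neither of these is a different route that ``buys'' something: the second issue costs you the very localisation that your (flawed) $G^\circ$ device is then needed to compensate for.
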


\begin{proof}
Let $\eps$ be an end of $G$ with $\deg(\eps) =  \kappa$. 
By the previous results, we may assume that $\kappa$ is singular and $\cf\p{\kappa} > \aleph_1$. 
Hence, by the Greedy Lemma~\ref{lem_greedy}, there are some set of vertices $U \subset V(G)$ with $|U| < \kappa$ and a $\kappa$-sized family $\mathcal{C}$ of internally disjoint $\eps$--$U$ combs. 
Consider the  $(\vert U\vert,\kappa)$-minor $H=(U,\mathcal{C})$ of $G$ where we contract the interior of every $\eps$--$U$ comb $C \in \mathcal{C}$. 
By Lemma~\ref{lem_final}, $H$~contains either an $(\aleph_0,\kappa)$-subgraph (in which case we are done by Lemma~\ref{lem_countablecore}), 
or a collection of disjoint $(\aleph_0,\kappa_i)$-subgraphs for $\set{\kappa_i}:{i < \cf\p{\kappa}}$ cofinal in $\kappa$ with all $\kappa_i > \max \Set{\aleph_{\omega+1}, \cf\p{\kappa}}$ regular.

Consider one $(\aleph_0, \kappa_i)$-subgraph $H_i$. As \weakch\ implies in particular that $2^{\aleph_0} < \aleph_{\omega+1}$, it follows from  $\aleph_{\omega+1} < \kappa_i$ and the regularity of $\kappa_i$ that there is a complete $(\aleph_0, \kappa_i)$-subgraph $H'_i\subset H_i$. 
Uncontracting the $\kappa_i$-side of $H'_i$ to combs and applying Lemma~\ref{lem_countablecore} inside the resulting subgraph of~$G$ (in which by construction all spines of the combs are still equivalent) gives a star of rays $S_i$ of size $\kappa_i$. By construction, any two such stars are disjoint.

Now, we apply Proposition~\ref{thm_aleph_n} to the collection $\mathcal{R}$ of all center rays of the $S_i$  to obtain a star of rays $S$ of size $\cf(\kappa)$ with leaf rays in $\mathcal{R}$. Keeping only those $S_i$ whose centre ray is a leaf ray of~$S$, we may assume that $S$ has precisely $\mathcal{R}$ as set of leaf rays. Since $|S| < |S_i|$  for all $i$, we may assume that each $S_i$ meets $S$ only in the former's centre ray. 
Then $S \cup \bigcup \set{S_i}:{i < \cf\p{\kappa}}$ yields a connected ray graph of size $\kappa$.
\end{proof}

\section{The first counterexample to Halin's conjecture}
\label{sec_counter}

\subsection{Order trees, \texorpdfstring{$T$}{T}-graphs and ray inflations} 
A partially ordered set $(T,\le)$ is called an \emph{order tree} if it has a unique minimal element (called the \emph{root}) and all subsets of the form $\lceil t \rceil = \lceil t \rceil_T := \set{t' \in T}:{t'\le t}$
are well-ordered. Write  $\lfloor t \rfloor = \lfloor t \rfloor_T  := \set{t' \in T}:{t\le t'}$.

 A~maximal chain in~$T$ is called a \emph{branch}
of~$T$; note that every branch inherits a well-ordering from~$T$. 
The \emph{height} of~$T$ is the supremum of the order types of its branches. 
The \emph{height} of a point $t\in T$ is the order type of~$\mathring{\lceil t \rceil}  :=
\lceil t \rceil  \setminus \{t\}$. The set $T^i$ of all points at height $i$ is
the $i$th \emph{level} of~$T$, and we
write $T^{<i} := \bigcup\set{T^j}:{j < i}$ as well as $T^{\leq i} := \bigcup\set{T^j}:{j \leq i}$.
 If $t < t'$, we use the usual interval notation  $(t,t') = \{s \colon t< s  < t'\}$ for nodes between  $t$ and $t'$.
If there is no point between $t$ and~$t'$, we call $t'$ a \emph{successor}
of~$t$ and $t$ the \emph{predecessor} of~$t'$; if $t$ is not a successor
of any point it is called a \emph{limit}.

An order tree $T$ is \emph{normal} in a graph $G$, if $V(G) = T$
and the two endvertices of any edge of $G$ are comparable in~$T$. We call $G$ a
\emph{$T$-graph} if $T$ is normal in $G$ and the set of lower neighbours of
any point $t$ is cofinal in $\mathring{\lceil t \rceil}$. 
For detailed information on normal tree orders, see also \cite{brochet1994normal}.

Given $T$ an order tree, a $T$-graph $G$ is \emph{sparse} if
the down-neighbourhood of any node $t$ is 
of order type $\cf\big(\mathring{\downcl{t}}\big)$. For trees $T$ of height at most $\omega_1$, this means that if $t$ is a successor, it has a unique down-neighbour namely its predecessor, and if $t$ is a limit, its down-neighbours form a cofinal $\omega$-sequence in $\mathring{\downcl{t}}$. (For $T$ an ordinal, this corresponds to a \emph{ladder system} in Todor\v{c}evi\'c's terminology \cite{todorcevic2007walks}).

An \emph{Aronszajn tree}  is an order tree of size $\aleph_1$ where all branches and levels are countable.

We  now introduce a concept that is crucial for all our counterexamples to Halin's conjecture.

\begin{defn}
\label{def_rayinflation}
Let $G$ be a sparse $T$-graph for an order tree $T$ of height at most $\omega_1$. The \emph{ray-inflation} $G \rayinf \N$ of $G$ is the graph with vertex set $T \times \N$, and the following edges (cf.~Figure~\ref{fig:RayInflation}): 
\begin{enumerate}
	\item For every $t \in T$ we add all the edges $(t,n)(t,n+1)$ with $n\in\N$ so that $R_t:=G[\,\{t\}\times\N\,]$ is a \emph{horizontal ray}.
	\item If $t \in T$ is a successor with predecessor $t'$, we add all edges $(t,n)(t',n)$ for all $n \in \N$. 
	\item If $t \in T$ is a limit with down-neighbours $t_0 <_T t_1 <_T t_2 <_T \ldots$ in $G$ we add the edges $(t,n)(t_n,n)$ for all $n \in \N$.
\end{enumerate}
\end{defn}
\begin{figure}[ht]
    \centering
\begin{tikzpicture}
\def\limity{6}

\draw[fill] (0,4.5) circle (.05);
\draw[->] (5,4.5) -- (10,4.5);

\foreach \y in {0,...,3} {
    \draw[fill] (0,\y+.5) circle (.05);
    \draw[->] (5,\y+.5) -- (10,\y+.5);
    \foreach \x in {5,...,9} {
        \draw[fill] (\x,\y+.5) circle (.05);
    }
}

\draw[->] (0,0) -- (0,5);
\draw[decorate sep={.2mm}{.8mm},fill] (0,5.33) -- (0,5.67);

\foreach \y in {4,...,0} {
    \draw[black] (0,\y) to[out=120,in=-120] (0,\limity);
    \draw[black] (5+\y,\y) to[out=120,in=-120] (5+\y,\limity);
    \draw[fill] (0,\y) circle (.05);
    \node at (0.4,\y) {\small $t_{\y}$};
    \draw[->] (5,\y) -- (10,\y);
    \node at (10.4,\y) {\small $R_{t_{\y}}$};
    \foreach \x in {5,...,9} {
        "\ifthenelse{\y=4}{\draw[decorate sep={.2mm}{.8mm},fill] (\x,\y+.14+.5) -- (\x,\y+1); \draw[fill] (\x,\y+.5) circle (.05); \draw (\x,\y) -- (\x,\y+.5);}{}"
        "\ifthenelse{\y>0}{\draw (\x,\y) -- (\x,\y-1);}{}"
        \pgfmathparse{\y+5}\edef\z{\pgfmathresult}
        "\ifthenelse{\x=\z}{\draw[fill, black] (\x,\y) circle (.05);\node at (\x+.45,\y-.2) {\footnotesize $(t_{\y},\y)$};}{\draw[fill] (\x,\y) circle (.05);}"
    }
}

\draw[fill] (0,\limity) circle (.05);
    \node at (0.4, \limity) {\small $t$};
\draw[->] (5,\limity) -- (10,\limity);
\node at (10.32,\limity) {\small $R_t$};
\foreach \x in {0,...,4} {
    \draw[fill] (\x+5,\limity) circle (.05);
    \node at (\x+5,\limity+.4) {\footnotesize $(t,\x)$};
}
\node at (0,-1) {$G$};
\node at (7.5,-1) {$G \rayinf \N$};
\end{tikzpicture}
\caption{The ray inflation of an $(\omega+1)$-graph.}
    \label{fig:RayInflation}
\end{figure}
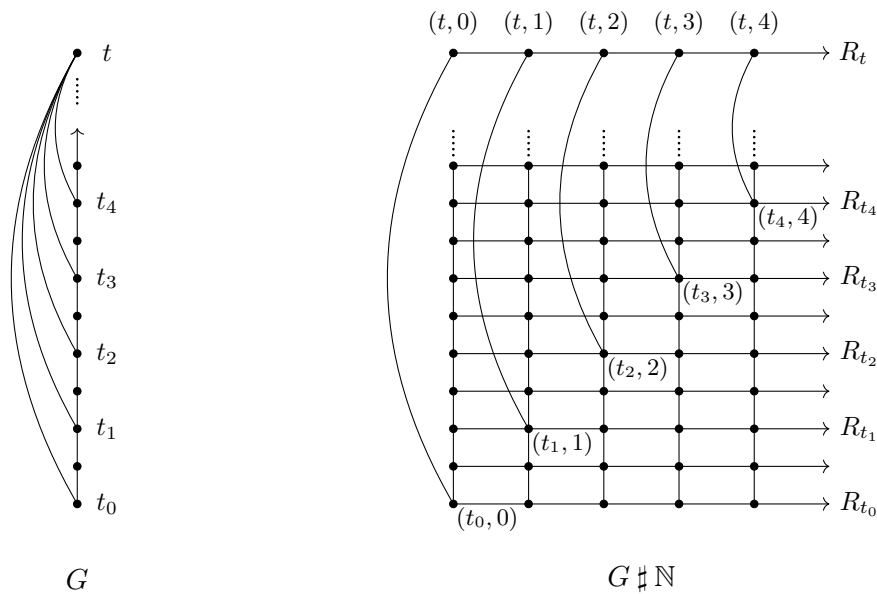

\begin{lemma}
\label{lem_ray_inflation_endstructure}
If $G$ is a sparse $T$-graph for $T$ an order tree of height at most $\omega_1$, then all the pairwise disjoint horizontal rays $R_t$ in the ray inflation $H=G \rayinf \N$ belong to the same sole end $\eps$ of $G \rayinf \N$; in particular, $\deg(\eps)=\vert T\vert$.
\end{lemma}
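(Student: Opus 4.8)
The plan is to prove the stronger statement that for every finite vertex set $S$ the graph $H - S$ has exactly one infinite component, and that this component contains a tail of every horizontal ray $R_t$. All three assertions of the lemma follow at once: taking $S=\emptyset$ shows that $H$ is connected; the uniqueness of the infinite component across all finite $S$ shows that $H$ has exactly one end $\eps$, to which every $R_t$ then belongs since each $R_t$ has a tail in that component; and the pairwise disjoint family $(R_t \colon t \in T)$ together with $|V(H)| = |T \times \N| = |T|$ (using that $T$ is infinite) pins down $\deg(\eps) = |T|$.

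The main work is locating and isolating that unique infinite component. First I would fix a finite $S$ and choose $N \in \N$ so large that every vertex of $S$ sits at a level below $N$, i.e.\ $S \subseteq T \times \{0,\dots,N-1\}$; this is possible because $S$ is finite. The point of this choice is that every connecting edge of $H$ lying at a level $n \ge N$ survives in $H - S$: concretely, for a successor $t$ with predecessor $t'$ the edge $(t,N)(t',N)$ survives, and for a limit $t$ with down-neighbours $t_0 <_T t_1 <_T \cdots$ the edge $(t,N)(t_N,N)$ survives. Writing $T_t := \{t\} \times \{n \colon n \ge N\}$ for the level-$\ge N$ tail of $R_t$, I let $C^*$ be the component of $H - S$ containing $T_r$, where $r$ is the root of $T$.

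Next I would show, by transfinite induction on the height of $t$, that $T_t \subseteq C^*$ for every $t \in T$. The base case $t = r$ is the definition of $C^*$. If $t$ is a successor with predecessor $t'$, then $t'$ has strictly smaller height, so $T_{t'} \subseteq C^*$ by induction, and the surviving edge $(t,N)(t',N)$ places $(t,N)$, hence the connected tail $T_t$, into $C^*$. If $t$ is a limit, then by sparseness (and since $\mathring{\downcl{t}}$ is a countable limit ordinal, so of cofinality $\omega$) its down-neighbours form a cofinal $\omega$-sequence $t_0 <_T t_1 <_T \cdots$; choosing $t_N$, which again has strictly smaller height, the inductive hypothesis gives $T_{t_N} \subseteq C^*$, and the surviving edge $(t,N)(t_N,N)$ puts $T_t$ into $C^*$. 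This is the step I expect to require the most care, as it is where the limit edges of the ray inflation must be exploited and where one must confirm that a down-neighbour with a surviving connecting edge is available.

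Finally I would argue that $C^*$ is the only infinite component by showing that it is cofinite in $H - S$. Indeed, if a column $t$ is not met by $S$ then all of $R_t$ lies in $H - S$ and is connected to $T_t \subseteq C^*$, so $R_t \subseteq C^*$; and any surviving vertex at level $\ge N$ lies in some $T_t \subseteq C^*$ as well. Hence the only vertices of $H - S$ possibly outside $C^*$ are those $(t,m)$ with $t$ among the finitely many columns meeting $S$ and $m < N$, which is a finite set. Thus every component of $H - S$ other than $C^*$ is finite, completing the proof of the strengthened statement and therefore of the lemma.
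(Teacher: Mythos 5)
Your proof is correct, but it takes a genuinely different route from the paper's. The paper argues by transfinite induction on the levels $i$ of $T$, showing that the horizontal rays indexed by $T^{\le i}$ all belong to the same sole end of the induced subgraph $H^{\le i}=H[T^{\le i}\times\N]$; the inductive step extends each new horizontal ray $R_t$ to a comb attached to $H^{<i}$ using exactly the successor or limit edges of the inflation, and invokes the fact that the vertex set of $H^{<i}$ is partitioned into rays already known to be equivalent. You instead work with the separator characterisation of ends: for an arbitrary finite $S$ you push $S$ below a level $N$, run a transfinite induction on the \emph{height of nodes} to show that every tail $\{t\}\times\{n\colon n\ge N\}$ lands in one component $C^*$ of $H-S$ (using that each surviving connecting edge $(t,N)(t',N)$ resp.\ $(t,N)(t_N,N)$ points to a node of strictly smaller height), and then observe that $C^*$ is cofinite among the surviving vertices, so it is the unique infinite component. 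Your limit case is handled correctly: since $T$ has height at most $\omega_1$, the set $\mathring{\downcl{t}}$ is a countable limit ordinal, so sparseness does provide a cofinal $\omega$-sequence of down-neighbours and in particular a $t_N$ of smaller height. What your approach buys is a cleaner justification of the word ``sole'': one-endedness of $H$ falls out immediately from the uniqueness of the infinite component for every finite $S$, whereas the paper's level-by-level argument is somewhat terse about why the end of $H^{\le i}$ is unique at limit levels $i$. What the paper's approach buys is that the comb constructions it exhibits are reused verbatim elsewhere in the text. The only point worth flagging is cosmetic: your cardinality computation $\deg(\eps)=\vert T\vert$ via $\vert V(H)\vert=\vert T\vert\cdot\aleph_0=\vert T\vert$ tacitly assumes $T$ infinite, which you acknowledge and which is the intended setting.
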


\begin{proof}
We show by an ordinal induction on $i\ge 0$ that the rays in the set $\script{R}^{\le i}:=\{R_t\colon t\in T^{\le i}\}$ belong to the same sole end of $H^{\le i}:=H[T^{\le i}\times\N]$.
For $i$ equal to the height of $T$, this implies the statement of the lemma.
The induction starts because $\script{R}^{\le 0}$ consists of the ray~$H^{\le 0}=R_r$ for $r$ the root of~$T$.
Now suppose that $i>0$.
We have to show that every horizontal ray $R_t$ with $t\in T^i$ is equivalent in $H^{\le i}$ to some other horizontal ray $R_{s}$ with $s\in T^{<i}$.
For this, let any horizontal ray~$R_t$ with $t\in T^i$ be given.
Since the vertex set of $H^{<i}$ is partitioned by $\script{R}^{<i}$ into vertex sets of rays that belong to the same sole end of $H^{<i}$ by the induction hypothesis, it suffices to show that $R_t$ can be extended to a comb in $H^{\le i}$ attached to $H^{<i}$.
If $t$ is a successor in $T$ with predecessor~$t'$, then such a comb arises from $R_t$ by adding the edges $(t,n)(t',n)$ for all $n\in\N$.
And if $t$ is a limit in~$T$ with down-neighbours $t_0<_T t_1<_T t_2<_T\ldots$ in $G$, then such a comb arises from $R_t$ by adding the edges $(t,n)(t_n,n)$ for all $n\in\N$.
\end{proof}

\begin{lemma}
\label{lem_ray_inflation_components}
Let $G$ be a sparse $T$-graph for $T$ an order tree of height at most $\omega_1$, let $0 \leq i < \omega_1 $ and let $H$ be the ray inflation of $G$.
Then the map 
$T^i\ni t\mapsto H[\,\upcl{t}\times\N\,]$ is a bijection between $T^i$ and the components of $H-(T^{<i}\times\N)$.
\end{lemma}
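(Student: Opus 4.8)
The lemma states that for a sparse $T$-graph $G$ with $T$ of height at most $\omega_1$, and its ray inflation $H = G \rayinf \N$, the map sending $t \in T^i$ to $H[\,\upcl{t} \times \N\,]$ is a bijection between level $T^i$ and the components of $H - (T^{<i} \times \N)$.

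Let me think about what $H - (T^{<i} \times \N)$ looks like. We remove all vertices $(s, n)$ with $s$ at height less than $i$. What remains is $H[\,T^{\geq i} \times \N\,]$ where $T^{\geq i} = \bigcup_{j \geq i} T^j$. Note that $T^{\geq i} = \bigcup_{t \in T^i} \upcl{t}$ — the points at height $\geq i$ are exactly those lying above (or at) some point of level $i$.

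**The plan**

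First I would establish that the sets $\upcl{t}$ for $t \in T^i$ partition $T^{\geq i}$. For each $s$ with height $\geq i$, the down-closure $\lceil s \rceil$ is well-ordered, so there is a unique element at each height below the height of $s$; in particular there is a unique $t \in T^i$ with $t \leq s$, giving $s \in \upcl{t}$. Distinct $t, t' \in T^i$ give disjoint $\upcl{t}, \upcl{t'}$ since a common element above both would force $t, t'$ comparable, but elements of the same level are incomparable. This shows the sets $\upcl{t} \times \N$ partition the vertex set of $H - (T^{<i} \times \N)$ and that the map is injective (the sets are distinct and nonempty).

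**Showing each piece is a component**

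Next I would show each $H[\,\upcl{t} \times \N\,]$ is connected. It contains the horizontal ray $R_t$, and every vertex $(s,n)$ with $s > t$ connects downward: by normality and the $T$-graph structure, $s$ has a down-neighbour $s'$ (its predecessor if successor, or a cofinal sequence member if limit), and the inflation edges of type (2)/(3) link $(s,n)$ to $(s', n)$ with $s' \in \upcl{t}$ still. Inducting on the height of $s$ within $\upcl{t}$ connects everything to $R_t$. The main obstacle is the limit case: when $s$ is a limit, I must confirm its down-neighbours $s_0 <_T s_1 < \cdots$ remain in $\upcl{t}$, which holds because they are cofinal in $\mathring{\lceil s \rceil} \supseteq \lceil t \rceil \setminus\{t\}$ — but crucially, the down-neighbours might initially lie below $t$; however, since they are cofinal in $\mathring{\lceil s \rceil}$ and $t < s$, cofinally many exceed $t$, so connectivity to $R_t$ still follows through those.

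**No edges between pieces**

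Finally I would verify no edge of $H$ joins $\upcl{t} \times \N$ to $\upcl{t'} \times \N$ for distinct $t, t'$. Every edge of $H$ is either horizontal (type 1, within a single $R_s$) or a vertical inflation edge (types 2, 3) joining $(s,n)$ to $(s', n)$ with $s' <_T s$ comparable. Since both endpoints lie in $T^{\geq i} \times \N$, and comparable elements lie on a common branch hence above a common level-$i$ point, both endpoints lie in the same $\upcl{t}$. Thus each $H[\,\upcl{t} \times \N\,]$ is a union of components; combined with connectivity, it is exactly one component, and the map is a bijection. I expect the limit-case connectivity argument to be the only genuinely delicate point.
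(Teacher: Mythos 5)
Your proof is correct, but it takes a genuinely different route from the paper's. The paper contracts each horizontal ray $R_t$ back to a single vertex to recover $G$ as a contraction minor of $H$, invokes the standard fact that for any $T$-graph the map $T^i\ni t\mapsto G[\,\upcl{t}\,]$ is a bijection between $T^i$ and the components of $G-T^{<i}$, and then notes that the components of $H-(T^{<i}\times\N)$ are exactly the uncontractions of the components of $G-T^{<i}$. You instead verify everything directly in $H$: the sets $\upcl{t}$ ($t\in T^i$) partition $T^{\geq i}$ because each well-ordered down-closure $\downcl{s}$ meets level $i$ in a unique point; every edge of $H$ joins vertices over comparable tree nodes and hence respects this partition; and each piece $H[\,\upcl{t}\times\N\,]$ is connected by transfinite induction on height, with the limit case handled correctly by observing that the down-neighbour sequence of a limit $s>t$ is cofinal in $\mathring{\downcl{s}}$ and therefore eventually lies at or above $t$, so one such edge together with the horizontal ray $R_s$ suffices. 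Your argument buys self-containedness --- it effectively re-proves, in the inflated setting, the ``well known'' fact the paper cites, and makes explicit the one delicate point (down-neighbours of a limit may drop below $t$, but only finitely many of them do any harm); the paper's argument buys brevity at the cost of leaning on the cited fact and on the unstated but routine claim that uncontracting connected branch sets preserves the component structure.
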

\begin{proof}
The $T$-graph $G$ is the contraction minor of $H$ that arises by contracting every horizontal ray $R_t$ to a single vertex.
For $T$-graphs such as $G$ it is well known, and straightforward to show, that the map $T^i\ni t\mapsto G[\,\upcl{t}\,]$ is a bijection between $T^i$ and the components of $G-T^{<i}$.
Since every component of $H-(T^{<i}\times\N)$ arises from a component of $G-T^{<i}$ by uncontracting every vertex of that component to a horizontal ray, the claim follows.
\end{proof}

\subsection{An Aronszajn tree of rays}

Our  counterexample to $\HC(\aleph_1)$ is based on the ray inflation of an Aronszajn tree.
Diestel, Leader and Todor\v{c}evi\'c showed in~\cite{DiestelLeaderNST} that:

\begin{prop}
\label{thm_dl_Aronszajn_with_property}
There exist an Aronszajn tree $T$ and a sparse $T$-graph $G$ with the following property:
\begin{fleqn}%
\begin{equation*}%
\tag{$\star$}%
\hspace{2\parindent}\begin{aligned}\label{dlp}%
    \parbox{\textwidth-5\parindent}{For every $t\in T$ there is a finite $S_t\subset\mathring{\downcl{t}}$ such that every $t'>t$ has all its\\down-neighbours below $t$ inside $S_t$.}%
\end{aligned}%
\end{equation*}%
\end{fleqn}%
\end{prop}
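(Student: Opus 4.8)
The plan is to reduce the statement to a purely combinatorial coherence property of a ladder system on an Aronszajn tree, and then to realise that property using a coherent sequence of finite-to-one functions.

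First I would record what a sparse $T$-graph amounts to when $T$ is Aronszajn. Since all branches of $T$ are countable, every node $t$ of limit height has $\cf\big(\mathring{\downcl{t}}\big)=\omega$, while every successor node has its predecessor as unique down-neighbour. Thus a sparse $T$-graph structure on a fixed Aronszajn tree $T$ is exactly a choice, for each limit node $t$, of a cofinal $\omega$-sequence (a \emph{ladder}) $L_t\subset\mathring{\downcl{t}}$; the successor nodes carry no freedom at all. I would then unwind property $(\star)$ in these terms. Fix $t$ and take any $t'>t$. If $t'$ is a successor with predecessor $p$, then $t\le p$, so $p$ is never strictly below $t$; hence successors impose no constraint and $(\star)$ concerns only limit nodes $t'>t$. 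For such $t'$ the elements of $L_{t'}$ lying below $t$ automatically lie on the chain $\mathring{\downcl{t}}$ (as $t'>t$), and $(\star)$ asks precisely that they be contained in one finite set $S_t$ independent of $t'$. In short, $(\star)$ says that below any fixed $t$ the ladders of all the (possibly uncountably many) nodes above $t$ agree up to a common finite set: a coherence condition.

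To produce such a coherent ladder system I would start from a coherent sequence of finite-to-one functions. In \ZFC\ there is a sequence $(e_\alpha\colon\alpha<\omega_1)$ with each $e_\alpha\colon\alpha\to\N$ injective and finite-to-one and with $\{\xi<\alpha\colon e_\alpha(\xi)\neq e_\beta(\xi)\}$ finite whenever $\alpha<\beta$ (the classical construction underlying special Aronszajn trees). Ordering $T:=\{e_\beta\restriction\alpha\colon\alpha\le\beta<\omega_1\}$ by inclusion yields an Aronszajn tree: finite-to-one-ness forbids an uncountable branch (it would give a finite-to-one map $\omega_1\to\N$), and coherence makes each level countable, as the nodes at height $\alpha$ are finite modifications of $e_\alpha$. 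A node $t=e_\beta\restriction\alpha$ sits at height $\alpha$, with $\mathring{\downcl{t}}=\{e_\beta\restriction\gamma\colon\gamma<\alpha\}$. For each limit node $t=e_\beta\restriction\alpha$ I would then define $L_t$ by a canonical cofinal $\omega$-sequence extracted from $e_\beta$ (for instance along a walk down a fixed $C$-sequence), chosen so that it depends on $e_\beta$ only up to finite modifications below any given level. With such a recipe $(\star)$ should follow from coherence: for $t=e_\beta\restriction\alpha_0$ and any limit $t'=e_{\beta'}\restriction\alpha'>t$, the functions $e_{\beta'}\restriction\alpha_0$ and $e_\beta\restriction\alpha_0$ differ in only finitely many places (both agree mod finite with $e_{\alpha_0}$), so $L_{t'}\cap\mathring{\downcl{t}}$ can deviate from a fixed sequence only within a finite set of positions.

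The main obstacle is exactly this last uniformity: guaranteeing that one finite $S_t$ works simultaneously for all uncountably many $t'>t$. A naive ladder recipe makes $L_{t'}\cap\mathring{\downcl{t}}$ finite for each individual $t'$, yet a priori these finite sets could range over infinitely many positions as $t'$ varies. Controlling them requires that the ladder of $t'$ below level $\alpha_0$ be determined, up to finitely many errors bounded \emph{independently of $t'$}, by data living strictly below $\alpha_0$; this is precisely the strength supplied by the coherent sequence, and is where the careful choice of the $L_t$ enters. Concretely, I would verify $(\star)$ by an induction on the height $\alpha_0$ of $t$, together with a pressing-down/reflection argument showing that only finitely many levels below $\alpha_0$ are ever used by the ladders of nodes above $t$, and then take $S_t$ to be the finitely many corresponding nodes on $\mathring{\downcl{t}}$. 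An alternative, equally viable route is to build $T$ and the ladders together by a single transfinite recursion on levels, carrying $(\star)$ as an inductive invariant and only continuing a branch through a limit level once a ladder coherent with those already placed has been secured; the delicate point is identical, namely maintaining the uniform finiteness while keeping each ladder cofinal of type $\omega$.
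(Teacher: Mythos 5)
Your reduction of $(\star)$ to a uniform coherence property of a ladder system on an Aronszajn tree is correct (successor nodes indeed impose no constraint, so only the ladders $L_{t'}$ of limit nodes $t'>t$ matter). But the proposal stops exactly where the proposition actually lives: you never specify the ladder recipe, and the verification of $(\star)$ is deferred to a ``walk down a fixed $C$-sequence'' plus a ``pressing-down/reflection argument'' that is not carried out. The obstacle you yourself flag --- that the finite sets $L_{t'}\cap\mathring{\downcl{t}}$ could range over infinitely many positions as $t'$ varies --- is not resolved by coherence of the $e_\alpha$'s alone: for $t$ at height $\alpha_0$, the restrictions $e_{\beta'}\restriction\alpha_0$ of the uncountably many $\beta'$ with $e_{\beta'}\restriction\alpha_0 \supseteq$ (a finite modification determining) $t$ differ from $e_{\alpha_0}$ on finite sets that are \emph{not} uniformly bounded over $\beta'$; they typically sweep out infinitely many coordinates below $\alpha_0$. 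So any ladder read off from $e_{\beta'}$ in a ``finite-modification-invariant'' way still leaves you needing a separate uniformity argument, and Fodor-type pressing down gives stationary-set information, not the finite bound required here. As written, the proof has a genuine gap at its central step.

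For comparison, the paper's construction avoids this entirely by using the antichain structure rather than the coherent functions: take a special Aronszajn tree with an antichain partition $\set{U_n}:{n\in\N}$, and for each limit $t'$ build its ladder greedily from the root, always stepping to the unique point of the least-indexed antichain meeting the current interval $(t'_{n-1},t')$. The point is that the indices used along any one ladder are strictly increasing (an antichain meets a chain at most once), while the fixed node $t$ lies in some $U_k$; hence every $t'>t$ must, within a number of steps bounded in terms of $k$ and $t$ only, pick a point in $[t,t')$, after which no further down-neighbours lie below $t$. Moreover, as long as a ladder stays below $t$ it follows the \emph{same} canonical sequence towards $t$, so a single finite initial segment of that sequence serves as $S_t$. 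If you want to salvage your approach, you would need to extract from the $e_\alpha$'s (or from walks) a ladder system with exactly this ``bounded exit time below $t$, uniform in $t'$'' property; that is a nontrivial additional argument, not a consequence of coherence as stated.
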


\begin{proof} We sketch the construction from \cite[Theorem~6.2]{DiestelLeaderNST} for convenience of the reader.
Let $T$ be an Aronszajn tree with an antichain partition $\set{U_n}:{n \in \N}$ (the standard Aronszajn tree constructions yield such an antichain partition).

Given a limit $t \in T$, choose its down-neighbours $t_0 < t_1 < t_2 < \ldots$ inductively, starting with~$t_0$ the root of $T$. If $t_{n-1} < t$ has already been defined, consider the least $i  \in \N$ such that the antichain~$U_i$ meets the interval $(t_{n-1},t)$, and let $t_n$ be the (unique) point in $U_i \cap (t_{n-1},t)$. It is easy to check that the $T$-graph $G$  is as desired.
\end{proof}

\begin{lemma}
\label{lem_dlp_to_dlmp}
If $T$ is an Aronszajn tree and $G$ is any $T$-graph with property~\emph{(\ref{dlp})}, then the ray inflation $H=G \rayinf \N$ of $G$ has the following property:
\begin{fleqn}%
\begin{equation*}%
\tag{$\star\star$}%
\hspace{2\parindent}\begin{aligned}\label{dlmp}%
    \parbox{\textwidth-5\parindent}{For every $t\in T$ there is a finite $S_t\subset\mathring{\downcl{t}}$ such that every $(t',n)\in H$ with $t'>_T t$\\and $n\in\N$ satisfies $N_H(\,(t',n)\,)\cap (\mathring{\downcl{t}}\times\N)\subset S_t\times\vert S_t\vert$.}%
\end{aligned}%
\end{equation*}%
\end{fleqn}%
\end{lemma}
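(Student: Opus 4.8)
The plan is to reuse verbatim the finite sets $S_t\subset\mathring{\downcl{t}}$ supplied by property~(\ref{dlp}) and to check that they already witness property~(\ref{dlmp}). The whole argument reduces to tracing, for a fixed vertex $(t',n)$ with $t'>_T t$, which of its neighbours in $H$ can possibly lie in $\mathring{\downcl{t}}\times\N$. The key observation is that only the single \emph{downward} tree-edge out of $(t',n)$ is relevant, and that this edge is exactly what property~(\ref{dlp}) controls.

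First I would enumerate the neighbours of $(t',n)$ according to the three edge types of Definition~\ref{def_rayinflation}. The horizontal neighbours $(t',n\pm1)$ keep the first coordinate $t'$, and every edge of types~(2)–(3) leaving $(t',n)$ in the \emph{upward} direction leads to a vertex whose first coordinate lies strictly above $t'$ in~$T$. Since $t<_T t'$, none of these first coordinates lies in $\mathring{\downcl{t}}$. Hence the only candidate neighbour in $\mathring{\downcl{t}}\times\N$ is the unique downward tree-neighbour of $(t',n)$.

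Next I would treat the two cases for this downward neighbour. If $t'$ is a successor with predecessor~$p$, the downward neighbour is $(p,n)$; but $p$ is the immediate predecessor of $t'$ while $t<_T t'$, so $t\le_T p$ (otherwise $t$ would lie strictly between $p$ and $t'$), and therefore $p\notin\mathring{\downcl{t}}$, so the requirement holds vacuously. If $t'$ is a limit with down-neighbours $t'_0<_T t'_1<_T\cdots$ in $G$, the downward neighbour of $(t',n)$ is $(t'_n,n)$, which lies in $\mathring{\downcl{t}}\times\N$ precisely when $t'_n<_T t$. Here property~(\ref{dlp}) enters: any such $t'_n<_T t$ must already belong to the finite set $S_t$. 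Because the sequence $(t'_m)$ is strictly increasing in the tree order, the indices $n$ with $t'_n<_T t$ form an initial segment, and as the corresponding $t'_n$ are distinct members of $S_t$ there can be at most $|S_t|$ of them. Thus $n<|S_t|$ whenever $(t'_n,n)\in\mathring{\downcl{t}}\times\N$, and combining $t'_n\in S_t$ with $n<|S_t|$ gives $(t'_n,n)\in S_t\times|S_t|$, exactly as required.

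I do not anticipate a serious obstacle: the content is the bookkeeping of neighbours in the ray inflation together with the translation of~(\ref{dlp}) into a bound on second coordinates. The one point that genuinely needs care — and which is really the crux — is the passage from ``finitely many down-neighbours below $t$'' to ``the relevant heights are bounded by $|S_t|$''. This works only because the down-neighbours of a limit are listed in increasing tree-order, so the ones below $t$ occupy precisely the initial indices $0,1,\dots,k-1$ with $k\le|S_t|$, which is what lets the height index of the offending neighbour be matched against $|S_t|$.
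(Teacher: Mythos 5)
Your proof is correct and follows essentially the same route as the paper's: reuse the sets $S_t$ from~(\ref{dlp}), observe that only the unique downward tree-edge of $(t',n)$ can land in $\mathring{\downcl{t}}\times\N$, dismiss the successor case as vacuous, and in the limit case use that the down-neighbours below $t$ form an initial segment of the increasing enumeration, hence occupy indices $<\vert S_t\vert$. If anything, your write-up is slightly more careful than the paper's in enumerating the edge types and in noting explicitly that for a fixed $(t',n)$ the only candidate neighbour is $(t'_n,n)$.
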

\begin{proof}
Given $t\in T$, and given $S_t$ by property~(\ref{dlp}), we show that every $(t',n)\in H$ with $t'>_T t$ and $n\in\N$ satisfies $N:=N_H(\,(t',n)\,)\cap (\mathring{\downcl{t}}\times\N)\subset S_t\times\vert S_t\vert$.
If $t'$ is a successor, then $N$ is empty, so $N\subset S_t\times\vert S_t\vert$.
Otherwise $t'$ is a limit and has down-neighbours $t'_0<_T t'_1<_T t'_2\ldots$ in $G$.
Then $\{t'_n\colon n\in\N\}\cap \mathring{\downcl{t}}=\{t'_0,\ldots,t'_k\}\subset S_t$ for $k<\vert S_t\vert$, so $N=\{\,(t'_0,0),\ldots,(t_k',k)\,\}\subset S_t\times\vert S_t\vert$.
\end{proof}

\begin{theorem}
\label{thm_ATcounterexample}
Let $T$ be an Aronszajn tree and let $G$ be a $T$-graph with property~\emph{(\ref{dlp})} as in Proposition~\ref{thm_dl_Aronszajn_with_property}.
Then the ray inflation $G \rayinf \N$  of  $G$ witnesses that $\HC(\aleph_1)$ fails.
\end{theorem}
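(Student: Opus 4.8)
The plan is to show two things about the ray inflation $H=G\rayinf\N$: first, that it is one-ended of degree exactly $\aleph_1$, so that it does carry an end $\eps$ of the relevant uncountable degree; and second, that $\eps$ contains no $\aleph_1$-star of rays, which by the discussion preceding the statement of our results suffices to refute $\HC(\aleph_1)$ (any ray graph for an end of degree $\aleph_1$ would contain a vertex of degree $\aleph_1$, hence an $\aleph_1$-star of rays). The first point is immediate from Lemma~\ref{lem_ray_inflation_endstructure}: since $T$ is an Aronszajn tree we have $\vert T\vert=\aleph_1$, so all the horizontal rays $R_t$ lie in a single end $\eps$ with $\deg(\eps)=\vert T\vert=\aleph_1$. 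The whole weight of the argument therefore rests on the second point.

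To show there is no $\aleph_1$-star of rays, I would argue by contradiction. Suppose some central ray $R$ together with $\aleph_1$ leaf rays $(R_i\colon i<\omega_1)$, all pairwise disjoint and each sending infinitely many disjoint paths to $R$ that meet pairwise only in $R$, lives in $\eps$. Each leaf ray $R_i$ is a ray in $H$; the key structural fact I want to exploit is that in $H$ a ray can only ``climb'' the tree $T$ in a controlled way. More precisely, I would first show that every ray $S$ in $H$ eventually stays inside $H[\,\upcl{t}\times\N\,]$ for some node $t$, equivalently that the set of $T$-coordinates visited cofinally by $S$ has a well-defined ``trace'' in $T$; this follows from Lemma~\ref{lem_ray_inflation_components}, since the components of $H-(T^{<i}\times\N)$ are exactly the $H[\,\upcl{t}\times\N\,]$ with $t\in T^i$, so a tail of any ray is confined to one such component at each level. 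Running this over all levels $i<\omega_1$ and using that branches of $T$ are countable while $\omega_1$ is regular, one extracts for each leaf ray $R_i$ a single node $t_i\in T$ (or a point on a branch) that records where $R_i$ ultimately goes.

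The heart of the matter — and where property~(\ref{dlmp}) from Lemma~\ref{lem_dlp_to_dlmp} enters — is that the central ray $R$ cannot absorb infinitely many disjoint paths from $\aleph_1$ different leaf rays. Because $R$ is itself a ray in the Aronszajn inflation, it too has a bounded ``footprint'': there is a node $t^*$ such that a tail of $R$ avoids $\mathring{\downcl{t^*}}\times\N$, while the finitely many vertices of $R$ that do lie in $\mathring{\downcl{t^*}}\times\N$ (or rather a suitable initial segment) are confined, via property~(\ref{dlmp}), to the finite set $S_{t^*}\times\vert S_{t^*}\vert$. The crux is that any leaf ray $R_i$ that eventually sits strictly above $t^*$ can only reach $R$ through neighbours lying in $S_{t^*}\times\vert S_{t^*}\vert$, a \emph{finite} set, so it cannot send infinitely many disjoint paths to $R$ without reusing vertices. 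Since an Aronszajn tree has no uncountable branch, at most countably many of the $t_i$ can fail to lie strictly above $t^*$ (those along the finitely many relevant initial segments), leaving $\aleph_1$ leaf rays stranded above $t^*$ — each of which therefore fails to send infinitely many disjoint paths to $R$, the desired contradiction.

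The main obstacle I anticipate is making the informal notion of a ray's ``trace'' or ``footprint'' in the tree fully rigorous and then correctly choosing the separating node $t^*$ so that property~(\ref{dlmp}) bites: one must argue that for $\aleph_1$ of the leaf rays their eventual location lies strictly above a \emph{single} node $t^*$ associated with $R$, and this is precisely where the Aronszajn property (countable branches, countable levels) combines with the regularity of $\omega_1$ to defeat any attempt to spread the paths out. I would be careful to handle the bookkeeping of ``eventual'' behaviour (passing to tails of rays, and noting that deleting finitely many vertices from $R$ destroys all but finitely many of any disjoint path family) so that the finiteness in~(\ref{dlmp}) genuinely forces the contradiction rather than merely bounding one level's worth of connections.
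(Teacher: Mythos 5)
Your overall strategy matches the paper's: one-endedness of degree $\aleph_1$ via Lemma~\ref{lem_ray_inflation_endstructure}, reduction to excluding an $\aleph_1$-star of rays, and a contradiction obtained from Lemma~\ref{lem_ray_inflation_components} together with property~(\ref{dlmp}). But the key step --- locating the star relative to the tree so that (\ref{dlmp}) applies --- is carried out incorrectly. First, your pigeonhole goes the wrong way round. You fix a single node $t^*$ and claim that, because an Aronszajn tree has no uncountable branch, ``at most countably many of the $t_i$ can fail to lie strictly above $t^*$''. This is false: the nodes of $T$ \emph{incomparable} with $t^*$ typically carry uncountably many points above them, so uncountably many leaf rays may live in subtrees disjoint from $\upcl{t^*}$, where $(\star\star)$ for $t^*$ says nothing. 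The correct move is the opposite one and uses the countability of \emph{levels}, not of branches: since the centre ray $R$ is countable, $V(R)\subset T^{<\sigma}\times\N$ for some $\sigma<\omega_1$; the set $T^{<\sigma}\times\N$ is countable, so all but countably many components of $S-R$ avoid it and hence, by Lemma~\ref{lem_ray_inflation_components}, each lies wholly inside $H[\,\upcl{t}\times\N\,]$ for some $t\in T^\sigma$; as $T^\sigma$ is countable, uncountably many sit above one and the same $t$ (even one would suffice). Relatedly, you want a tail of $R$ to \emph{avoid} $\mathring{\downcl{t^*}}\times\N$ and you apply $(\star\star)$ to vertices \emph{of} $R$; but $(\star\star)$ constrains the neighbours of vertices strictly \emph{above} $t$, and what is needed is that the attachment points on $R$ lie \emph{inside} $\mathring{\downcl{t}}\times\N$ --- which holds automatically once $V(R)\subset T^{<\sigma}\times\N$ and the component sits above $t\in T^\sigma$, since any $H$-neighbour of a vertex above $t$ whose tree coordinate has height $<\sigma$ must lie below $t$.

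Second, you argue about the leaf rays $R_i$ where you must argue about the whole components $D_i$ of $S-R$, including the connecting paths. A leaf ray may sit high above $t$ while its paths to $R$ descend through $T^{<\sigma}\times\N$ or through parts of the tree incomparable with $t$ before attaching to $R$; then the vertices adjacent to $R$ are not above $t$ and $(\star\star)$ does not bite. This is exactly what the pruning step rules out: once $D_i\subset H[\,\upcl{t}\times\N\,]-(\{t\}\times\N)$, every vertex of $D_i$ has tree coordinate strictly above $t$, so the infinitely many endpoints on $R$ of the disjoint $R_i$--$R$ paths give $D_i$ an infinite neighbourhood in $\mathring{\downcl{t}}\times\N$, contradicting the finiteness of $S_t\times\vert S_t\vert$ in $(\star\star)$. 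With these two corrections your sketch becomes the paper's proof; as written, the contradiction is not reached.
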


\begin{proof}
Suppose for a contradiction that for every end $\eps$ of $H=G \rayinf \N$ there is a set $\script{R}\subset\eps$ of disjoint rays with $\vert\script{R}\vert=\deg\p{\eps}$ such that some ray graph of $\script{R}$ in $G$ is connected.
By Lemma~\ref{lem_ray_inflation_endstructure}, all the horizontal rays $R_t$ ($t\in T$) of $G \rayinf \N$ belong to the same sole end $\eps$ with $\deg\p{\eps}=\vert T\vert=\aleph_1$.
Hence, by our assumption and Lemma~\ref{lem_fundamentalRayGraphs} we find an $\aleph_1$-star of rays in~$G \rayinf \N$ which we denote by~$S$.
We denote the centre ray of $S$ by $R$, and we denote the leaf rays of $S$ by $R_i$ ($i<\omega_1$).

Now, let $\sigma<\omega_1$ be minimal such that $V(R)\subset T^{<\sigma}\times\N$.
Since $T^{<\sigma}\times\N$ is countable, we may assume without loss of generality that $S$ meets $T^{<\sigma}\times\N$ precisely in~$R$.
Then every component of $S-R$ is contained in a component of $H-(T^{<\sigma}\times\N)$.
By Lemma~\ref{lem_ray_inflation_components}, the map $T^\sigma\ni t\mapsto H[\,\upcl{t}\times\N\,]$ is a bijection between $T^\sigma$ and the components of $H-(T^{<\sigma}\times\N)$.
Thus, we obtain a map $\omega_1\to T^\sigma$, $i\mapsto t_i$ such that the component $D_i$ of $S-R$ with $D_i\supset R_i$ is contained in the component $C_i:=H[\,\upcl{t_i}\times\N\,]$ of $H-(T^{<\sigma}\times\N)$.
Since the level $T^\sigma$ of the Aronszajn tree $T$ is countable, some $t\in T^\sigma$ is the image $t_i$ of uncountably many indices $i<\omega_1$, and we abbreviate $C_i=:C$ for these indices.
Then for uncountably many indices $i<\omega_1$ of these uncountably many indices their components $D_i$ are contained in $C-(\{t\}\times\N)$ entirely.
But then all these components $D_i$ have infinite neighbourhood in $\mathring{\downcl{t}}\times\N$, contradicting that $H$ has property~(\ref{dlmp}) by Lemma~\ref{lem_dlp_to_dlmp}.
\end{proof}

We remark that  the ray inflation $G \rayinf \N$ of any (sparse) $\omega_1$-graph $G$ does contain an $\aleph_1$-star of rays.  
Indeed, by \cite[Proposition~3.5]{DiestelLeaderNST} applied to the minor $G$ of $G \rayinf \N$, the ray inflation $G \rayinf \N$ contains a $K^{\aleph_1}$ minor, and so in fact a subdivision of $K^{\aleph_1}$ by a well-known result of Jung~\cite{jung1967zusammenzuge}.
In particular, the ray inflations giving counterexamples to $\HC(\aleph_1)$ must be based on Aronszajn trees.

We remark that, based on this example, one can  construct counterexamples to Halin's conjecture for all $\kappa$ with $\cf\p{\kappa} = \aleph_1$,  see Theorem~\ref{thm_cofinalityupwards}.

\section{On \texorpdfstring{$(\lambda,\kappa)$}{(lambda,kappa)}-graphs II -- regular trees with  tops}
\label{sec_lambdakappaII}

Earlier, in Section~\ref{sec_lambdakappa} we investigated conditions under which $(\lambda,\kappa)$-graphs possess $(\aleph_0,\kappa)$-subgraphs, 
and used these results to prove a number of positive instances of Halin's conjecture in Section~\ref{sec_affirm}.
However, there exist $(\lambda,\kappa)$-graphs without $(\aleph_0,\kappa)$-subgraphs, and the question arises whether these can be turned into counterexamples for $\HC(\kappa)$.
And indeed, our main result in this section, Theorem~\ref{thm_MaxRewriteStefan}, states that there is such a class of $(\lambda,\kappa)$-graphs, 
which we call $(\lambda,\kappa)$-graphs of type $T_\lambda$, that achieve precisely this,
see Theorem~\ref{thm_scalecounterexamples} below.

Given a cardinal $\lambda \ge 2$, either finite or of countable cofinality, let $(T_\lambda,\leq)$ be the order tree where the nodes of $T_\lambda$ are all sequences of elements of $\lambda$ of length $\leq \omega$ including the empty sequence, and $t \leq t'$ if $t$ is an initial segment of $t'$. Then $T_\lambda$ is an order tree of height $\omega+1$ in which every point of finite height has exactly $\lambda$ successors and above every branch of $T^{<\omega}_
\lambda$ there
is exactly one point in $T^\omega_\lambda$, represented by a countable sequence of ordinals in $\lambda$. 
Since $\lambda \ge 2$ is finite or of countable cofinality, it follows from K\"onig's Theorem \cite[Theorem~5.10]{jech2013set} that $T_\lambda$ has strictly more  than $\lambda$ many branches, that is to say we have $|T^\omega_\lambda| \geq \max \{\lambda^+,2^{\aleph_0}\}$.

The down-closure of any $\kappa$-sized
subset $X$ of $T^\omega_\lambda$ for $\kappa > \lambda$
is a \emph{$\lambda$-tree with $\kappa$ tops}, the tops themselves being the points
in $X$. 
Now if $T$ is a $\lambda$-tree with $\kappa$ tops, then any $T$-graph clearly contains
a $(\lambda,\kappa)$-graph with bipartition classes $T^{<\omega}$ and $T^{\omega}$; we shall call any such graph a $(\lambda,\kappa)$-graph of \emph{type $T_\lambda$}.

We remark that $\lambda$-trees with $\kappa$ tops form a generalisation of the so-called \emph{binary trees with tops}, studied in more detail in \cite{BowlerGeschkePitzNST,DiestelLeaderNST}, where it  was shown that, consistently, every $(\aleph_0,\aleph_1)$-graph contains an
$(\aleph_0,\aleph_1)$-subgraph of type $T_2$  \cite[Theorem~1.1]{BowlerGeschkePitzNST}, a statement which does not hold under \CH\ \cite[Proposition~8.2]{DiestelLeaderNST}.

Our main result in this section, which forms the basis for \ref{item_ZFCcounterexample} in Theorem~\ref{thm_mainresult}, is the following:

\begin{theorem}\label{thm_MaxRewriteStefan}
For any singular cardinal $\lambda$ of countable cofinality there is a  $(\lambda,\lambda^+)$-graph of type~$T_\lambda$ that does not have an $(\aleph_0,\lambda^+)$-subgraph.
\end{theorem}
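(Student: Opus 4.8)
The plan is to realize the desired graph as a $T$-graph on a $\lambda$-tree with $\lambda^+$ tops whose tops are indexed by a \emph{scale} from Shelah's pcf-theory. Fix $\lambda$ singular with $\cf(\lambda)=\omega$. The single deep ingredient I would invoke is the ZFC existence of regular uncountable cardinals $\lambda_n\uparrow\lambda$ together with a scale $\set{f_\alpha}:{\alpha<\lambda^+}$ in $\prod_n\lambda_n$ (see~\cite{jech2013set}): each $f_\alpha\in\prod_n\lambda_n$, the sequence is strictly increasing modulo finite (so $f_\alpha(n)<f_\beta(n)$ for all but finitely many $n$ whenever $\alpha<\beta$), and it is cofinal, i.e.\ for every $g\in\prod_n\lambda_n$ there is $\alpha$ with $g(n)<f_\alpha(n)$ for all but finitely many $n$. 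Since $\lambda_n<\lambda$, every $f_\alpha$ is a branch of $T_\lambda$, and the $f_\alpha$ are pairwise distinct.

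For the construction I would set $X_0=\set{f_\alpha}:{\alpha<\lambda^+}\subseteq T_\lambda^\omega$ and pad it with a set $X_1\subseteq T_\lambda^\omega$ of $\lambda$ further branches chosen so that every finite node of $T_\lambda$ lies below some branch of $X_1$ (possible as $|\lambda^{<\omega}|=\lambda$). With $X=X_0\cup X_1$ we have $|X|=\lambda^+$, and the down-closure $T=\downcl{X}$ satisfies $T^{<\omega}=\lambda^{<\omega}$ of size $\lambda$ and $T^\omega=X$. I then take the $T$-graph in which each top is joined to all of its finite initial segments; this yields a $(\lambda,\lambda^+)$-graph $(A,B)$ of type $T_\lambda$ with $A=\lambda^{<\omega}$, $B=X$, and $b$ adjacent to $s$ exactly when $s<_T b$. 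The role of $X_1$ is only to force $|A|=\lambda$ exactly; being of size $\lambda<\lambda^+$ it is negligible for everything that follows.

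The heart of the proof is a domination argument. Suppose towards a contradiction that $(A',B')$ were an $(\aleph_0,\lambda^+)$-subgraph, so that $A'$ is countable and each $b\in B'$ has infinitely many initial segments in $A'$. As $|X_1|=\lambda$, the index set $I=\set{\alpha}:{f_\alpha\in B'}$ has size $\lambda^+$. I would define $g\in\prod_n\lambda_n$ by $g(n)=\sup\set{s(n)}:{s\in A',\ n<|s|,\ s(n)<\lambda_n}+1$; since $A'$ is countable and each $\lambda_n$ is regular uncountable, this supremum stays below $\lambda_n$, so $g$ is well defined. By cofinality of the scale pick $\gamma$ with $g<^{*}f_\gamma$, whence $g<^{*}f_\beta$ for every $\beta>\gamma$, say $f_\beta(n)>g(n)$ for all $n\ge N_\beta$. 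Then $f_\beta$ can have no initial segment $f_\beta\restriction n\in A'$ with $n>N_\beta$: choosing $m$ with $N_\beta\le m<n$, the value $(f_\beta\restriction n)(m)=f_\beta(m)<\lambda_m$ would contribute to the supremum defining $g(m)$, giving $g(m)>f_\beta(m)$, while $m\ge N_\beta$ forces $f_\beta(m)>g(m)$, a contradiction. Hence $f_\beta$ has only the finitely many initial segments $f_\beta\restriction n$ with $n\le N_\beta$ in $A'$, so $f_\beta\notin B'$. Therefore $I\subseteq\gamma+1$ has size at most $\lambda$, contradicting $|I|=\lambda^+$.

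The main obstacle is really just the first step: the ZFC existence of a scale of length $\lambda^+$ in a product $\prod_n\lambda_n$ with $\lambda_n$ regular and cofinal in $\lambda$. This is precisely where pcf-theory is needed, and it is what upgrades the result from a consistency statement to a theorem of ZFC. Once the scale is available, the only subtlety in the elementary part is bookkeeping: the padding branches in $X_1$ inject arbitrary values $\ge\lambda_n$ into $A$, so the supremum defining the dominating function $g$ must be restricted to the coordinates $s(n)<\lambda_n$ arising from genuine scale initial segments; this restriction is exactly what keeps $g$ inside $\prod_n\lambda_n$ while still letting it dominate all sufficiently late scale functions.
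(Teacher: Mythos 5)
Your proposal is correct and follows essentially the same route as the paper: invoke Shelah's ZFC theorem producing a $\lambda^+$-scale in a product $\prod_n\lambda_n$ of regular cardinals cofinal in $\lambda$, realise the scale functions as tops of $T_\lambda$, and defeat any putative $(\aleph_0,\lambda^+)$-subgraph by a domination argument in which a single function $g\in\prod_n\lambda_n$ built from the countable side eventually bounds all but $\leq\lambda$ of the scale functions. The only differences from the paper's Proposition~\ref{ExamplesFromScales} are cosmetic: you work modulo the ideal of finite sets rather than an arbitrary ideal containing the finite sets, and you pad the tops to normalise $|T^{<\omega}|=\lambda$, which you correctly account for in the definition of $g$.
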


Before we proceed to the proof, we also state two consistent  results about the number of branches of a certain pair of such trees. 
For this, recall that \GCH\ implies that every $\lambda$-tree $T_\lambda$ from above for $\lambda$ of countable cofinality has precisely $\lambda^+$ branches \cite[Theorem~5.15(ii)]{jech2013set}, in which case every $(\lambda,\kappa)$-graph of type $T_\lambda$ satisfies $\kappa  =  \lambda^+$. 

Of course, one way to increase the number of branches of $T_\lambda$ is to work in a model of \ZFC\ where $2^{\aleph_0}$ is large.

It turns out, however, that for at least two trees, namely $T_{\aleph_\omega}$ and $T_\mu$ where $\mu = \aleph_\mu$ denotes the least $\aleph$ fixed point (which is a singular cardinal of countably cofinality, namely $\mu  = \sup \{\aleph_0,\aleph_{\aleph_0},\aleph_{\aleph_{\aleph_0}},\ldots \}$), it is known that the number of branches can consistently be much larger despite the continuum being small, i.e.\  $2^{\aleph_0}=\aleph_1$. This gives rise to  our second result in this section, forming the basis for \ref{item_consistentcounterexamples} in Theorem~\ref{thm_mainresult}:

\begin{theorem}
\label{thm_MaxRewriteStefan2}
\begin{enumerate}
    \item For any countable ordinal $\alpha$ with $\omega < \alpha < \omega_1$, it is consistent that we have  \CH\ and there is an  $(\aleph_\omega,\aleph_{\alpha})$-graph of type $T_{\aleph_\omega}$ that does not have an $(\aleph_0,\aleph_{\alpha})$-subgraph.
    \item Let $\mu=\aleph_\mu$ denote the first fixed point of the $\aleph$-function. 
 Then for every $\kappa>\mu$ it is consistent that we have  \CH\ and there is a
 $(\mu,\kappa)$-graph of type $T_\mu$ that contains no  $(\aleph_0,\kappa)$-subgraph.
\end{enumerate}
\end{theorem}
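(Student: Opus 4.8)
The plan is to separate a single combinatorial lemma, provable from CH alone, from two set-theoretic inputs about the number of branches of $T_{\aleph_\omega}$ and $T_\mu$ that are already available in the literature. The lemma I would prove is the following: \emph{assume CH; then for every singular $\lambda$ of countable cofinality and every set $X\subseteq T_\lambda^\omega$ of $\kappa>\aleph_1$ tops, the associated $(\lambda,\kappa)$-graph of type $T_\lambda$ has no $(\aleph_0,\kappa)$-subgraph.} Granting this, each part reduces to producing, consistently with CH, enough branches: part~(1) needs $\aleph_\omega^{\aleph_0}\ge\aleph_\alpha$ branches in $T_{\aleph_\omega}$ and part~(2) needs $\mu^{\aleph_0}\ge\kappa$ branches in $T_\mu$, and in both cases $\kappa>\aleph_1$ holds automatically. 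Having secured the branches I would pick any $\kappa$ of them as the tops $X$, arranging the choice so that the down-closure of $X$ already exhausts $T_\lambda^{<\omega}$; this is possible because $\kappa>\lambda=\vert T_\lambda^{<\omega}\vert$, and it guarantees that the $A$-side has size exactly $\lambda$, so that $(A,X)$ is a genuine $(\lambda,\kappa)$-graph of type $T_\lambda$. The lemma then finishes the proof.

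For the lemma the argument is a counting argument on the branches of a countable tree. Write $A=T_\lambda^{<\omega}$, so that the neighbours of a top $b=(b_0,b_1,\dots)$ form a cofinal subset of its initial segments $b\restriction n$. Suppose $A'\subseteq A$ is countable and $D\subseteq X$ is a set of tops each with infinitely many neighbours in $A'$. For each $b\in D$ infinitely many segments $b\restriction n$ lie in $A'$, and since these have unbounded lengths the whole chain $\set{b\restriction n}:{n\in\N}$ lies in the down-closure $\overline{A'}$; thus $b$ determines a cofinal $\omega$-branch of the countable tree $\overline{A'}$, and distinct tops clearly determine distinct branches. As a countable tree has at most $2^{\aleph_0}$ branches of length $\omega$, CH yields $\vert D\vert\le 2^{\aleph_0}=\aleph_1<\kappa$, so no $(\aleph_0,\kappa)$-subgraph exists. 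This is precisely where CH does all the work: in ZFC the countable tree $\overline{A'}$ may itself have $2^{\aleph_0}\ge\kappa$ branches, which is why the unconditional Theorem~\ref{thm_MaxRewriteStefan} cannot argue this way and must instead select its tops carefully along a scale.

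It remains to quote the two consistency inputs, which I would not reprove. For part~(1) I would cite Magidor's theorem that, modulo large cardinals, it is consistent that GCH holds below $\aleph_\omega$ --- so in particular CH holds and $\aleph_\omega$ is strong limit --- while $2^{\aleph_\omega}=\aleph_\delta$ for a prescribed successor ordinal $\delta<\omega_1$; taking $\delta\ge\alpha$ gives a model of CH in which $\aleph_\omega^{\aleph_0}=2^{\aleph_\omega}\ge\aleph_\alpha$, and the bound $\alpha<\omega_1$ is exactly the range these constructions cover with a small continuum. For part~(2) I would invoke the analogous but more flexible statement at the first fixed point $\mu=\aleph_\mu$: since $\mu$ is a fixed point, the pcf bounds that constrain $\aleph_\omega^{\aleph_0}$ no longer apply, and it is consistent (again modulo large cardinals, via Gitik--Magidor type forcing) that CH holds while $\mu^{\aleph_0}\ge\kappa$ for any prescribed $\kappa$. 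In each resulting model the tree has at least the required number of branches, and the combinatorial lemma applies. I expect the main obstacle to be not the combinatorics --- which CH renders routine --- but the careful sourcing and statement of these independence results together with the cofinality bookkeeping (confirming $\vert A\vert=\lambda$, and that ``infinitely many neighbours in $A'$'' genuinely forces a branch of $\overline{A'}$); the substantive set-theoretic difficulty is entirely absorbed into the cited models.
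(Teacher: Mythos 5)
Your proposal is correct and follows essentially the same route as the paper: both rest on the observation that under \CH\ a countable down-closed subtree has at most $2^{\aleph_0}=\aleph_1<\kappa$ branches, so no countable $A$-side can serve $\kappa$ tops with infinitely many neighbours, combined with the Gitik--Magidor (resp.\ Shelah) consistency results giving $T_{\aleph_\omega}$ (resp.\ $T_\mu$) sufficiently many branches while \CH\ holds. Your extra bookkeeping (isolating the \CH-lemma, arranging the down-closure of the chosen tops to exhaust $T_\lambda^{<\omega}$) is a harmless elaboration of what the paper does implicitly.
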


Given the 
result that the trees $T_{\aleph_\omega}$ and $T_\mu$ can consistently have as many branches as needed, the proof  of Theorem~\ref{thm_MaxRewriteStefan2} is remarkably simple.

\begin{proof}
(1) Assuming large cardinals, Gitik and Magidor \cite{GitikMagidor} showed that $T_{\aleph_\omega}$ can consistently have any number of branches of the form $\aleph_{\omega+\alpha+1}$ where $\alpha<\omega_1$, while simultaneously having \GCH\ below $\aleph_\omega$ (i.e.\ $2^{\aleph_n} = \aleph_{n+1}$ for all $n<\omega$). 
Now let $\omega < \alpha < \omega_1$ and consider any $(\aleph_\omega,\aleph_{\alpha})$-graph of type $T_{\aleph_\omega}$ in such a model, and suppose for a contradiction that it contains an $(\aleph_0,\aleph_{\alpha})$-subgraph $(A,B)$. Without loss of generality, $B$ is a set of tops of $T_{\aleph_\omega}$, and $A \subset T_{\aleph_\omega}$ is a down-closed subtree. But a countable tree contains at most $2^{\aleph_0}  =\aleph_1$ many branches, contradicting that every top in $ B$ has infinitely many neighbours in $A$.

(2) Assuming even larger cardinals, Shelah showed that $T_{\mu}$ can consistently have arbitrarily many branches, while simultaneously having \GCH\ below $\mu$. See again~\cite{GitikMagidor}. Now given $\kappa > \mu$, consider any $(\mu,\kappa)$-graph of type $T_{\mu}$ in such a suitable model in which $|T_{\mu}^\omega| \geq \kappa$ is sufficiently large. As before, such a graph cannot contain an $(\aleph_0,\kappa)$-subgraph.
\end{proof}

Note that by a similar argument, one can obtain a simple proof of Theorem~\ref{thm_MaxRewriteStefan} under \GCH. Perhaps remarkably, however, this  assertion holds already in ZFC. 
Our examples rely on the notion of a \emph{scale} (see also \cite[Chapter~24]{jech2013set}) that have been developed for Shelah's pcf-theory \cite{CardinalArithmetic}. 
Recall that an \emph{ideal} on the natural numbers $\N$ is a proper subset of~$\mathcal{P}(\N)$ that contains the empty set, is closed under finite unions and is closed under taking subsets of its elements.
Thus, it is the dual notion of a filter~\cite[Chapter~7]{jech2013set}.

Given an ideal $I$ on $\N$ and two sequences $f,g \colon \N \to \lambda$ of ordinals, we write 
$f<_Ig$ if 
$$\{n\in\N\colon f(n)\geq g(n)\}\in I.$$

\begin{defn}[Scales]
Let $\lambda$ be a singular cardinal of countable cofinality and $\kappa > \lambda$ regular. 
A~\emph{$\kappa$-scale} for $T_\lambda$ is a well-ordered collection $X =(f_\alpha)_{\alpha<\kappa}$ of tops of $T_\lambda$ for which there are 
\begin{itemize} 
    \item a strictly increasing sequence $(\lambda_n)_{n\in\N}$ of uncountable regular cardinals  with supremum $\lambda$ satisfying $f_\alpha(n) < \lambda_n$ for all $\alpha < \kappa$, and
    \item an ideal $I$ on $\N$ containing all finite sets, 
\end{itemize}
such that 
\begin{enumerate}
    \item for all $\alpha,\beta<\kappa$ with $\alpha<\beta$ we have $f_\alpha<_If_\beta$ and
    \item for all $g\in\prod_{n\in\N}\lambda_n$ there is $\alpha<\kappa$ such that $g<_If_\alpha$.
\end{enumerate}
\end{defn}

\begin{prop}\label{ExamplesFromScales}
Let $\lambda$ be a singular cardinal of countable cofinality. Given any $\kappa$-scale $X$, the corresponding tree $T_\lambda$ with tops $X$ gives rise to a $(\lambda,\kappa)$-graph that has no $(\aleph_0,\kappa)$-subgraph.
\end{prop}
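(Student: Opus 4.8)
The plan is to realise the graph concretely and then prove the sharper combinatorial fact that any fixed \emph{countable} set of finite nodes can have infinitely many neighbours among only \emph{boundedly} many tops of the scale; since an initial segment of $\kappa$ has size $<\kappa$, this rules out an $(\aleph_0,\kappa)$-subgraph.

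First I would fix the graph. Let $A:=T^{<\omega}$ be the finite nodes of the down-closure of $X$, and $B:=X=(f_\alpha)_{\alpha<\kappa}$ the tops, joining each $f_\alpha$ to its finite initial segments $f_\alpha\restriction n$ (or to whatever cofinal, hence infinite, subset of them the chosen $T$-graph prescribes). Each top then has infinitely many neighbours, and since every finite node lies in $\bigcup_{n\in\N}\prod_{m<n}\lambda_m$ with $\big|\prod_{m<n}\lambda_m\big|=\lambda_{n-1}<\lambda$, we get $|A|\le\sum_{n}\lambda_n=\lambda$; so this is a $(\lambda,\kappa)$-graph of type $T_\lambda$. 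Now suppose towards a contradiction that it has an $(\aleph_0,\kappa)$-subgraph. Respecting the bipartition (as we may by our convention on subgraphs), this is a countable $A'\subseteq A$ together with a $\kappa$-sized $B'\subseteq X$ such that every $f\in B'$ has infinitely many neighbours, and hence infinitely many initial segments, in $A'$.

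The heart of the argument is to compress the countable set $A'$ into a single dominating function. I would define $g\colon\N\to\lambda$ by $g(n):=\sup\{\,s(n)+1:s\in A',\ |s|>n\,\}$ (with $\sup\emptyset=0$). As $A'$ is countable and each such $s(n)<\lambda_n$, and crucially because $\lambda_n$ is \emph{regular and uncountable}, this countable supremum stays below $\lambda_n$, so $g(n)<\lambda_n$ and $g\in\prod_n\lambda_n$. The key observation is that every $f\in B'$ satisfies $f(n)<g(n)$ for \emph{all} $n$: given $n$, choose some $s=f\restriction k\in A'$ with $k>n$ (possible since infinitely many initial segments of $f$ lie in $A'$); then $s(n)=f(n)$ and $|s|>n$, whence $g(n)\ge f(n)+1$. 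In particular $\{n:f(n)\ge g(n)\}=\emptyset\in I$, so $f<_I g$ for every $f\in B'$.

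Finally I would invoke the scale. By property~(2) there is $\alpha_0<\kappa$ with $g<_I f_{\alpha_0}$. If some $f_\alpha\in B'$ had $\alpha>\alpha_0$, then property~(1) gives $f_{\alpha_0}<_I f_\alpha$, and transitivity of $<_I$ (valid since $\{n:u(n)\ge w(n)\}\subseteq\{n:u(n)\ge v(n)\}\cup\{n:v(n)\ge w(n)\}$, and $I$ is closed under unions) yields $g<_I f_\alpha$; together with $f_\alpha<_I g$ from the previous step this forces $f_\alpha<_I f_\alpha$, i.e.\ $\N\in I$, contradicting that $I$ is a proper ideal. Hence $B'\subseteq\{f_\alpha:\alpha\le\alpha_0\}$, so $|B'|\le|\alpha_0|+1<\kappa$, contradicting $|B'|=\kappa$. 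I expect the main obstacle to be the pointwise-domination step of the middle paragraph: the whole argument hinges on converting the purely combinatorial statement ``$f$ has infinitely many initial segments in the countable set $A'$'' into the uniform bound $f<_I g$, and this conversion relies essentially on the $\lambda_n$ being regular and uncountable (so that a countable supremum remains below $\lambda_n$) together with the fact that, in a scale, domination is measured modulo an ideal $I$ with $\N\notin I$.
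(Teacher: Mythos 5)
Your proof is correct and follows essentially the same route as the paper's: both compress the countable side into a single function $g\in\prod_n\lambda_n$ (using that the $\lambda_n$ are uncountable and regular), then use scale properties (1) and (2) together with properness of the ideal $I$ to conclude that every top with infinitely many neighbours below $g$ must have index at most some fixed $\alpha_0<\kappa$. The only cosmetic difference is that the paper first isolates the intermediate claim that a countable subtree has fewer than $\kappa$ branches in $X$, whereas you bound the indices of $B'$ directly; the substance is identical.
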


\begin{proof}
Let $X=(f_\alpha)_{\alpha<\kappa}$ be a $\kappa$-scale on $T_\lambda$. Let $T$ be the corresponding $\lambda$-tree with tops $X$.

Suppose $S\subseteq T$ is a countable subtree of $T$.
Then there is a function
$g \in \prod_{n\in\N}\lambda_n$ such that for all
$t\in S$ and all $n$ in the domain of $t$ we have
$t(n)<g(n)$.
Let $\alpha<\kappa$ be such that $g<_If_\alpha$.
Let $\beta<\kappa$ be such that $\alpha<\beta$.
Then $g<_If_\beta$.
Consider the set
$$A=\{n\in\N\colon g(n)\geq f_\beta(n)\}\in I.$$
Since $I$ is an ideal, we have $A\subsetneq \N$.
Hence, for some $n\in\N$ and all $t\in S$
with $n$ in the domain of~$t$,
$f_\beta(n)>t(n)$.
It follows that $f_\beta$ is not a branch of $S$.
This shows that $S$ has at most $|\alpha|<\kappa$
branches in $X$.

Now let $G'$ be any $T$-graph and $G \subset G'$ the corresponding $(\lambda,\kappa)$-graph with bipartition classes $T^{<\omega}$ and $X$. 
If $H$ is an $(\aleph_0,\kappa)$-subgraph $(C,D)$ of $G$, then without loss of generality $D \subset X$ and $C \subset  T^{<\omega}$, and we can choose a countable subtree $S$ of $T$ such that $C\subseteq S$.
Let $g$ and $\alpha$ be as in the argument above for the countable tree $S$.

Since $H$ is an $(\aleph_0,\kappa)$-graph, there is $\beta<\kappa$ such that $f_\beta\in D$ and $\beta>\alpha$.
As before, for some $n\in\N$ and all $t\in S$,
whenever $n$ is in the domain of $t$, then $f_\beta(n)>t(n)$.  It follows that for no $m\in\N$ with $m>n$, we have $f_\beta\restriction m\in S$.  Hence $f_\beta$ only has finitely many neighbours in $C$, contradicting the assumption that $H$ is an $(\aleph_0,\kappa)$-graph.
\end{proof}

\begin{proof}[Proof of Theorem~\ref{thm_MaxRewriteStefan}]
The assertion follows from Proposition~\ref{ExamplesFromScales} together with the well-known result by Shelah that for any singular cardinal $\lambda$ of countable cofinality, there is a $\lambda^+$-scale on $T_\lambda$, see \cite[Theorem~24.8]{jech2013set}.
\end{proof}

\section{More counterexamples to Halin's conjecture}
\label{sec_counterII}

We are now ready to turn the $(\lambda,\kappa)$-graphs of type $T_\lambda$ from the previous section into counterexamples for $\HC(\kappa)$.

\begin{theorem}
\label{thm_scalecounterexamples}
Let $T$ be a $\lambda$-tree with $\kappa$ tops $X$, and $G$ any sparse $T$-graph such that the corresponding  $(\lambda,\kappa)$-graph 
on $(T^{<\omega},X)$ has  no $(\aleph_0,\kappa)$-subgraph. Then the ray inflation $G \rayinf \N$  of  $G$ witnesses that $\HC(\kappa)$  fails.
\end{theorem}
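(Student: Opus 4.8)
The plan is to assume, for contradiction, that $\HC(\kappa)$ holds for the ray inflation $H=G\rayinf\N$, and to distil from a witnessing star of rays an $(\aleph_0,\kappa)$-subgraph of the $(\lambda,\kappa)$-graph on $(T^{<\omega},X)$, which contradicts the hypothesis. By Lemma~\ref{lem_ray_inflation_endstructure} the horizontal rays $R_t$ ($t\in T$) all belong to the unique end $\eps$ of $H$, and $\deg(\eps)=\vert T\vert=\kappa$. Under the assumption $\HC(\kappa)$, Lemma~\ref{lem_fundamentalRayGraphs} would hand us a $\kappa$-star of rays in $\eps$ (and, when $\kappa$ is singular, possibly only a $(\kappa,s)$-star). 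I would treat the $\kappa$-star first, since it already carries the whole idea. Throughout, write $\pi\colon T\times\N\to T$ for the projection onto the first coordinate.

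\textbf{Key reduction.} Let $R^*$ be the centre ray and $(R_i\colon i<\kappa)$ the leaf rays. The finite part $T^{<\omega}\times\N$ has size only $\lambda<\kappa$, so, the leaf rays being pairwise disjoint, at most $\lambda$ of them meet $T^{<\omega}\times\N$; discarding these still leaves $\kappa$ leaf rays avoiding $T^{<\omega}\times\N$ entirely. As distinct tops are pairwise incomparable and no top is a down-neighbour of another, the graph $H[\,X\times\N\,]$ has the horizontal rays $R_b$ ($b\in X$) as its components, so each surviving leaf ray is a tail of some $R_{b_i}$ with the $b_i$ pairwise distinct tops. Put $U:=V(R^*)$, a countable vertex set. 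Each surviving leaf ray $R_i$ together with its infinite family $\mathcal{P}_i$ of disjoint $R_i$--$R^*$ paths is an $\eps$--$U$ comb, and these $\kappa$ combs are internally disjoint since the star rays are disjoint and the paths independent.

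\textbf{Heart of the argument.} Fix such a comb with spine in the top column $b_i$. The only edges leaving the column $\{b_i\}\times\N$ are the down-edges $(b_i,n)(b_{i,n},n)$, where $b_{i,n}$ is the $n$-th down-neighbour of $b_i$ in $G$; since only finitely many teeth can lie in the column (the spine misses $U$), each path $P\in\mathcal{P}_i$ has a first vertex $z_P:=(b_{i,n},n)\in T^{<\omega}\times\N$ outside the column. Distinct, hence vertex-disjoint, paths have distinct exit vertices, so the comb uses infinitely many down-neighbours of $b_i$. Now split the paths according to whether $z_P\in U$ or $z_P\notin U$: if $z_P\in U$ then, the interior of $P$ missing $U$, $z_P$ is the tooth of $P$, whence $b_{i,n}\in A^*:=\pi(U)\cap T^{<\omega}$, a \emph{countable} subset of $T^{<\omega}$; if $z_P\notin U$ then $z_P$ lies in the comb's interior, and such interior exit vertices are pairwise distinct across all combs, so there are at most $\vert T^{<\omega}\times\N\vert=\lambda$ of them. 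Call a comb of the \emph{first kind} if infinitely many of its paths satisfy $z_P\in U$, and of the \emph{second kind} otherwise; every comb is of one kind. The combs of the second kind contribute pairwise distinct interior exit vertices, so there are at most $\lambda<\kappa$ of them; as the total is $\kappa$ and, for infinite cardinals, the union of fewer-than-$\kappa$-many with the rest has size $\kappa$, there are $\kappa$ combs of the first kind. For each of these $\kappa$ tops $b_i$, infinitely many down-neighbours lie in the countable set $A^*$ — precisely an $(\aleph_0,\kappa)$-subgraph of the $(\lambda,\kappa)$-graph on $(T^{<\omega},X)$, the desired contradiction.

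\textbf{Main obstacle.} The remaining case, needed only for singular $\kappa$, is a $(\kappa,s)$-star, whose ray graph is a frayed star with centre $R^*$, distributor rays $(D_j\colon j<\cf(\kappa))$ and leaf rays attached to the distributors. When $\cf(\kappa)=\omega$ this remains tractable: enlarging the core to $U:=V(R^*)\cup\bigcup_{j<\omega}V(D_j)$ keeps it countable, every leaf ray attaches to $U$, and the argument above applies verbatim. The genuine difficulty is a frayed star with $\cf(\kappa)>\omega$; then the natural core has size $\cf(\kappa)$, and running the counting per distributor (choosing $s$ with all terms $>\lambda$) yields only $\cf(\kappa)$ disjoint $(\aleph_0,\kappa_j)$-subgraphs, i.e.\ a $(\cf(\kappa),\kappa)$-subgraph, which — since $\cf(\cf(\kappa))=\cf(\kappa)$ blocks Lemma~\ref{lem_reducing} — cannot be reduced to an $(\aleph_0,\kappa)$-subgraph and so gives no contradiction. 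I expect this to be the crux, and I would resolve it not by refuting frayed stars directly but by reducing the singular instances to a regular cardinal via the cofinality-lifting Theorem~\ref{thm_cofinalityupwards}, so that Theorem~\ref{thm_scalecounterexamples} is effectively invoked with $\kappa$ regular, where Lemma~\ref{lem_fundamentalRayGraphs} guarantees a genuine $\kappa$-star and the frayed-star case never arises.
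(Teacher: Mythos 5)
Your treatment of the $\kappa$-star case is correct and is in essence the paper's own counting argument, run in the opposite direction: the paper projects the centre ray down to a countable down-closed subtree $\bar R$ of $T$, uses the hypothesis to ensure that $\kappa$ many of the relevant tops have almost all their down-neighbours outside $\bar R$, and then derives the contradiction by packing $\kappa$ internal path vertices into the $\lambda$-sized set $(T^{<\omega}\setminus\bar R)\times\N$; you instead bound by $\lambda$ the number of combs whose first exit vertices are internal and read off an $(\aleph_0,\kappa)$-subgraph from the remaining $\kappa$ combs. These are the same computation. Your handling of $\cf(\kappa)=\omega$ by enlarging the core to include the countably many distributor rays is also fine.

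The gap is your resolution of the case of singular $\kappa$ with $\cf(\kappa)>\omega$, which you correctly identify as the crux but then dispose of incorrectly. Deferring to Theorem~\ref{thm_cofinalityupwards} fails for two reasons. First, that theorem manufactures a counterexample for $\HC(\kappa)$ only from a counterexample for $\HC(\cf(\kappa))$ at the regular cardinal $\cf(\kappa)$, and no such counterexample need exist: Theorem~\ref{thm_scalecounterexamples} is invoked (via Theorem~\ref{thm_MaxRewriteStefan2} and Corollary~\ref{cor_theexamples}) for every $\kappa$ above the first $\aleph$-fixed point, including $\kappa$ with $\cf(\kappa)=\aleph_2$ --- and $\HC(\aleph_2)$ is a \ZFC\ theorem, so the lifting has nothing to lift. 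Second, even where it does apply, Theorem~\ref{thm_cofinalityupwards} exhibits a different witnessing graph $\hat G$, whereas the statement to be proved is that this particular ray inflation $G\rayinf\N$ witnesses the failure. The move you are missing is internal to the $(\kappa,s)$-star: choosing $s$ with all terms $\kappa_i>\lambda$ regular, a \emph{single} distributor ray together with its $\kappa_i$ leaf rays is already a genuine $\kappa'$-star of rays for a regular $\kappa'$ with $\lambda<\kappa'\le\kappa$, and your exit-vertex count applied to it alone produces an $(\aleph_0,\kappa')$-subgraph. There is no need to amalgamate the $\cf(\kappa)$ many pieces into an $(\aleph_0,\kappa)$-subgraph; one only needs that the trees in question admit no $(\aleph_0,\kappa')$-subgraph for any uncountable regular $\kappa'>\lambda$ with $\kappa'\leq\kappa$, which is exactly what the scale construction and the consistency models deliver (a countable subtree carries at most $\max\{|\alpha|,2^{\aleph_0}\}<\kappa'$ tops). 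This is how the paper reduces the singular case to a regular star and closes the argument.
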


\begin{proof}
Suppose for a contradiction that $\HC(\kappa)$ holds.
By Lemma~\ref{lem_ray_inflation_endstructure}, the ray inflation $H=G \rayinf \N$ has only one end $\eps$, and $\deg (\eps)=\vert T\vert=\kappa$.
Then by Lemma~\ref{lem_fundamentalRayGraphs} we find a $\kappa'$-star $S$ of rays in~$H$  with $\lambda< \kappa' \leq \kappa$ where $\kappa'$ is regular. For ease of notation, let us assume $\kappa = \kappa'$.
Denote the centre ray of $S$ by~$R$, and the leaf rays of $S$ by $R_i$ ($i<\kappa$).

Since $T^{<\omega}\times\N$ has size $\lambda<\kappa$, we may assume without loss of generality that each leaf ray $R_i$ is a tail of a horizontal ray $R_{t(i)}\subset H$ for a top $t(i)\in T^\omega$.
Then the map $i\mapsto t(i)$ is injective.
Next, we consider the `down-closed projection' of the center ray $R$ to $T$, namely \[\bar{R}:=\downcl{\{\,t\in T\mid \text{the horizontal ray }R_t\text{ meets }R\,\}},\]
a countable subtree of $T$.
We claim that the $\lambda$-set $X:=(T^{<\omega}\setminus\bar{R})\times\N$ contains $\kappa>\lambda$ many internal vertices of paths in the path system of~$S$, causing a contradiction.
By the choice of $T$, fewer than $\kappa$ many tops $t\in T^\omega$ satisfy $N_G(t)\subset \bar{R}$.
Hence we may assume without loss of generality that each leaf ray $R_i$ is a tail of a horizontal ray $R_{t(i)}$ with $N_G(t(i))\not\subset\bar{R}$ and, in particular, $N_G(t(i))\cap\bar{R}$ finite.
But then for every $i<\kappa$ all but finitely many vertices of the neighbourhood $N_H(R_{t(i)})$ are contained in~$X$.
Thus, all paths of the path system of~$S$ must have an internal vertex in~$X$, as desired.
\end{proof}

\begin{cor}\label{cor_theexamples}\leavevmode
\begin{enumerate}
 \item $\HC(\kappa)$ fails for all $\kappa \in \set{\mu^+}:{\cf\p{\mu} = \omega}$.
 \item  For every $\kappa \in \set{\aleph_{\alpha}}:{\omega<\alpha<\omega_1}$ it  is consistent that $\HC(\kappa)$ fails.
 \item Let $\mu$ denote the first fixed point of the $\aleph$-function. 
 Then for every  $\kappa>\mu$ it is consistent that $\HC(\kappa)$  fails.
\end{enumerate}
\end{cor}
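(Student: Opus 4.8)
The plan is to read all three items off Theorem~\ref{thm_scalecounterexamples}, which turns any sparse $T$-graph $G$ for a $\lambda$-tree $T$ with $\kappa$ tops $X$ into a witness that $\HC(\kappa)$ fails, as soon as the induced $(\lambda,\kappa)$-graph on $(T^{<\omega},X)$ has no $(\aleph_0,\kappa)$-subgraph. Thus for each prescribed $\kappa$ it suffices to produce a tree $T$ with $\kappa$ tops whose associated bipartite graph omits an $(\aleph_0,\kappa)$-subgraph, and exactly such trees are delivered by Theorems~\ref{thm_MaxRewriteStefan} and~\ref{thm_MaxRewriteStefan2}. Concretely: for item~(1) I would set $\lambda=\mu$ (singular of countable cofinality) and $\kappa=\mu^+$ and quote Theorem~\ref{thm_MaxRewriteStefan}; for item~(2) I would work in a model of $\ZFC+\CH$ furnished by Theorem~\ref{thm_MaxRewriteStefan2}(1), using the tree $T_{\aleph_\omega}$ equipped with $\aleph_\alpha$ tops; and for item~(3) in a model as in Theorem~\ref{thm_MaxRewriteStefan2}(2), using the fixed-point tree $T_\mu$ carrying $\kappa$ tops. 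Note that $\kappa$ may be singular in items~(2) and~(3), but Theorem~\ref{thm_scalecounterexamples} imposes no regularity on the number of tops, so this causes no trouble.

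The single point requiring attention is a hypothesis mismatch: Theorems~\ref{thm_MaxRewriteStefan} and~\ref{thm_MaxRewriteStefan2} merely assert the existence of \emph{some} $(\lambda,\kappa)$-graph of type~$T_\lambda$ without an $(\aleph_0,\kappa)$-subgraph, whereas Theorem~\ref{thm_scalecounterexamples} asks for a \emph{sparse} $T$-graph. I would bridge this by extracting from the given graph only its underlying $\lambda$-tree $T$ together with the $\kappa$-sized set $X$ of tops, and then building a sparse $T$-graph $G$ directly: each finite-height node, being a successor, takes its unique predecessor as sole down-neighbour, and each top $t\in X$ (a node of height $\omega$) is assigned any cofinal $\omega$-sequence in $\mathring{\downcl{t}}$. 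Such a $G$ is sparse and has height $\omega+1\le\omega_1$, so its ray inflation $G\rayinf\N$ is defined and Theorem~\ref{thm_scalecounterexamples} is applicable the moment we know its bipartite graph still omits an $(\aleph_0,\kappa)$-subgraph.

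This last verification is where I expect the only real work, and I would argue that the omission property depends solely on the pair $(T,X)$, not on the particular $T$-graph chosen. The reason is that in any $T$-graph the neighbours of a top $t$ lie among the finite initial segments of $t$; hence a top can have infinitely many neighbours inside a set $A\subseteq T^{<\omega}$ only if $A$ already contains infinitely many of those initial segments. But both arguments underlying Theorems~\ref{thm_MaxRewriteStefan} and~\ref{thm_MaxRewriteStefan2} — the scale argument of Proposition~\ref{ExamplesFromScales} and the branch-counting argument — establish precisely that for any countable $A$ fewer than $\kappa$ tops have infinitely many initial segments in $A$. Applied to our sparse $G$ this rules out an $(\aleph_0,\kappa)$-subgraph regardless of which cofinal down-neighbourhoods were picked, so $G$ inherits the property and Theorem~\ref{thm_scalecounterexamples} yields the desired counterexample in each of the three cases. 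The main obstacle is therefore not a fresh construction but this monotonicity check; once it is in place, all three items follow by plugging in the respective trees.
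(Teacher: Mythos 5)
Your route for items (2), (3) and for the singular-$\mu$ part of item (1) is exactly the paper's: the paper's proof of this corollary is the one-line combination of Theorem~\ref{thm_scalecounterexamples} with Theorems~\ref{thm_MaxRewriteStefan} and~\ref{thm_MaxRewriteStefan2}. Your ``monotonicity check'' for the sparseness hypothesis is a legitimate point that the paper leaves implicit, and your resolution is correct: both Proposition~\ref{ExamplesFromScales} and the branch-counting argument in Theorem~\ref{thm_MaxRewriteStefan2} show that the absence of an $(\aleph_0,\kappa)$-subgraph depends only on the pair $(T,X)$ and holds for \emph{every} $T$-graph on that tree, so one may freely pass to a sparse one.

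There is, however, a genuine gap in item (1). The set $\set{\mu^+}:{\cf\p{\mu}=\omega}$ contains $\mu=\aleph_0$, so item (1) asserts in particular that $\HC(\aleph_1)$ fails, and your argument silently assumes $\mu$ is singular (``I would set $\lambda=\mu$ (singular of countable cofinality)''), which excludes exactly this case. The exclusion is not cosmetic: the entire $T_\lambda$-with-tops machinery cannot produce the $\aleph_1$ counterexample. Theorem~\ref{thm_MaxRewriteStefan} is stated only for \emph{singular} $\lambda$ of countable cofinality, and for good reason --- a scale requires a strictly increasing sequence of \emph{uncountable regular} cardinals with supremum $\lambda$, which cannot exist for $\lambda=\aleph_0$. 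More fundamentally, a $(\lambda,\kappa)$-graph with $\lambda=\aleph_0$ \emph{is} an $(\aleph_0,\kappa)$-graph, hence is its own $(\aleph_0,\kappa)$-subgraph, so no tree with $\aleph_1$ tops can ever satisfy the hypothesis of Theorem~\ref{thm_scalecounterexamples}. The paper closes this case by citing the entirely separate construction of Theorem~\ref{thm_ATcounterexample}, the ray inflation of an Aronszajn tree with property~(\ref{dlp}) from Section~\ref{sec_counter}. Your proof needs to invoke that result (or reproduce an equivalent argument) for $\kappa=\aleph_1$; as written, the most delicate instance of item (1) is not covered.
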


\begin{proof}
Assertion (1) for  $\kappa  = \aleph_1$ is Theorem~\ref{thm_ATcounterexample}. For the remaining cardinals in (1), it follows from Theorem~\ref{thm_scalecounterexamples} together with Theorem~\ref{thm_MaxRewriteStefan}. Assertions (2) and (3) follow from Theorem~\ref{thm_scalecounterexamples} in combination with Theorem~\ref{thm_MaxRewriteStefan2}.
\end{proof}

\section{Lifting counterexamples to singular cardinals}
\label{sec_lifting}

\begin{theorem}
\label{thm_cofinalityupwards}
If $\HC(\kappa)$ fails for some regular cardinal $\kappa$, then $\HC(\lambda)$ fails for all cardinals $\lambda$ with $\cf(\lambda)=\kappa$.
\end{theorem}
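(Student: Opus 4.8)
The plan is to transform a counterexample witnessing the failure of $\HC(\kappa)$ into one for $\HC(\lambda)$ by \emph{inflating} its rays into fans of rays of carefully chosen sizes, and then to derive a contradiction from a hypothetical ray graph for the new end by collapsing these fans back down. We may assume $\lambda>\kappa$ and $\lambda$ singular, since the case $\lambda=\kappa$ is exactly the hypothesis. First I would put the given counterexample into a convenient normal form. As $\kappa$ is regular and uncountable (note $\HC(\aleph_0)$ holds) and $\HC(\kappa)$ fails, there is an end $\eps$ of degree $\kappa$ in some graph admitting no connected ray graph on a degree-witnessing set; since a $\kappa$-star of rays would itself be such a ray graph, $\eps$ contains no $\kappa$-star of rays. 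Fixing $\kappa$ disjoint rays $(R_\xi\colon\xi<\kappa)$ in $\eps$ together with, for each $\xi$, an infinite family of pairwise disjoint $R_\xi$--$R_0$ paths, I pass to the subgraph $G$ spanned by all these rays and paths. Then $\vert V(G)\vert=\kappa$, the $R_\xi$ still lie in a common end $\eps$ of $G$ of degree $\kappa$, and $G$ still contains no $\kappa$-star of rays, as any such star would survive back into the original graph.

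Next comes the construction. Since $\cf(\lambda)=\kappa$, fix a strictly increasing sequence $(\lambda_\xi\colon\xi<\kappa)$ of cardinals with supremum $\lambda$. Build $G'$ from $G$ by attaching to each ray $R_\xi=r^\xi_0r^\xi_1\cdots$ a \emph{fan} $F_\xi$: a family of $\lambda_\xi$ pairwise disjoint new rays $B^{\xi,\alpha}=b^{\xi,\alpha}_0b^{\xi,\alpha}_1\cdots$ for $\alpha<\lambda_\xi$, each joined to $R_\xi$ by the infinitely many disjoint rungs $b^{\xi,\alpha}_nr^\xi_n$; distinct fans are vertex-disjoint and meet $G$ only along their base ray. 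Each $B^{\xi,\alpha}$ is then equivalent to $R_\xi$, so all these rays belong to one end $\eps'$ of $G'$ extending $\eps$; the $\sum_{\xi<\kappa}\lambda_\xi=\lambda$ many fan rays give $\deg(\eps')\ge\lambda$, while $\vert V(G')\vert=\kappa+\lambda=\lambda$ forces $\deg(\eps')=\lambda$. Crucially, $G$ is recovered from $G'$ by the contraction $\pi$ collapsing each fan onto its base, sending $b^{\xi,\alpha}_n\mapsto r^\xi_n$ and fixing $V(G)$ pointwise.

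Now suppose for contradiction that $\HC(\lambda)$ holds. As $\lambda$ is singular with $\cf(\lambda)=\kappa$, Lemma~\ref{lem_fundamentalRayGraphs}(2) applied with $s=(\lambda_\xi\colon\xi<\kappa)$ yields in $G'$ either a $\lambda$-star of rays or a $(\lambda,s)$-star of rays, all rays in $\eps'$. In either case I would extract a centre ray together with $\kappa$ leaf rays carrying internally disjoint path systems to the centre: in the $(\lambda,s)$-case the centre together with the $\kappa$ distributor rays, in the $\lambda$-case the centre together with $\kappa$ of the $\lambda$ leaves. The key selection principle is a cardinality count: the skeleton $G$ together with any \emph{finite} set of fans has fewer than $\lambda$ vertices and so supports fewer than $\lambda$ disjoint rays of $\eps'$, whereas the distributor degrees (resp.\ the $\lambda$ leaves) are cofinal in $\lambda$. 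Exploiting that the $\lambda_\xi$ are strictly increasing, this should let me choose the $\kappa$ extracted leaf rays so that they lie in $\kappa$ \emph{distinct} fans $F_{\eta_\xi}$. Collapsing by $\pi$ then sends these leaves to tails of the $\kappa$ distinct base rays $R_{\eta_\xi}$ and the centre to a ray in $\eps$, and recovering internally disjoint path systems among them would produce a $\kappa$-star of rays in $\eps\subset G$ --- the desired contradiction.

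The main obstacle is precisely this final collapsing step, and it splits into two coupled difficulties. The first, \emph{distinctness} of the projected leaves, is handled by the cofinality-of-degrees argument above, which forces the chosen rays cofinally across the fans rather than into boundedly many of them. The second, and genuinely harder, is that $\pi$ identifies the $\lambda_\zeta$ fan rays lying over a common vertex $r^\zeta_n$, so two internally disjoint paths of the $G'$-star may project to walks overlapping on a base ray; preserving internal disjointness of the path systems under $\pi$ is therefore the crux of the proof. I expect to resolve it by exploiting that each path is finite, so meets the fans in only finitely many rungs, that $\pi$ fixes the skeleton pointwise, and that the $\kappa$ chosen leaves sit in distinct fans: after discarding the countably many rungs at which the centre ray itself meets the fans, a thinning argument --- using the regularity of $\kappa$ and the countability of each base ray --- should select, inside each projected family, an infinite subfamily of genuinely disjoint $R_{\eta_\xi}$--$C$ paths meeting only in the projected centre, completing the $\kappa$-star of rays in $G$.
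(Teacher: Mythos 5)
Your overall architecture matches the paper's: normalise the counterexample to a $\kappa$-sized graph $G$, attach to each of $\kappa$ disjoint rays $X_i$ in $\eps$ a fan of $\lambda_i$ new rays joined by matchings, observe that the resulting end has degree $\lambda$, invoke Lemma~\ref{lem_fundamentalRayGraphs}(2), and collapse back down. However, the collapsing step --- which you correctly identify as the crux --- contains two genuine gaps beyond the one you acknowledge. First, in the $(\lambda,s)$-star case the rays you must relate back to $G$ are the $\kappa$ distributor rays, and your cardinality count does not touch them: there are only $\kappa$ of them and $|V(G)|=\kappa$, so nothing forces them to be tails of fan rays or to ``lie in'' distinct fans --- a distributor ray may weave arbitrarily through $G$ and several fans. (The count does force the $\lambda$ leaf rays of the $(\lambda,s)$-star into fans, but those are not the rays you extract.) The paper instead proves, for each fan $\hat X_i$ containing uncountably many leaf rays, that some distributor ray $R_{j(i)}$ is attached to uncountably many of them, and then --- since every path leaving $\hat X_i - X_i$ must pass through the countable ray $X_i$, and the paths are independent --- that $R_{j(i)}$ meets $X_i$ infinitely often; it then reroutes the path system inside $X_i\cup R_{j(i)}\cup\bigcup\script{P}_i\cup R$ so as to replace the distributor ray by the base ray $X_i\subset G$ itself. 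Some such bottleneck argument is needed; it is not a thinning or pigeonhole step.

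Second, your claim that the collapse sends the centre ray ``to a ray in $\eps$'' is false in general: the centre ray $R$ may pass through many fans, and its image under $\pi$ is a disconnected union of segments of various base rays, not a ray. The paper does not project $R$ at all; it first arranges that all paths of the final configuration end on $R$ in vertices of $G$, and then applies Lemma~\ref{lem_countablecore} to the countable set $U=V(R)\cap V(G)$ to manufacture a fresh centre ray inside $G$. Your third difficulty --- independence of the projected path families --- is resolved roughly as you suggest: the paper shows each projected path meets only finitely many others in its family (using that initial segments of the rays $X_j$ are finite), whence an infinite pairwise disjoint subfamily exists. But without the two repairs above, the projected configuration is not yet a $\kappa$-star of rays in $G$, so the contradiction is not reached.
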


Our proof strategy is roughly as follows.
Given $\lambda>\kappa$ we consider any graph $G$ with an end~$\eps$ witnessing that $\HC(\kappa)$ fails.
Then we obtain a counterexample $\hat{G}$ for $\HC(\lambda)$ from $G$ as follows.
We select any $\kappa$ many disjoint rays $X_i$ ($i<\kappa$) in $\eps$ and consider any sequence $s=(\lambda_i\colon i<\kappa)$ of ordinals $\lambda_i<\lambda$ with supremum~$\lambda$.
Then we obtain $\hat{G}$ from $G$ by adding $\kappa$ many disjoint $\lambda_i$-stars of rays all meeting $G$ precisely in their centre ray~$X_i$.
Next, in order to verify that $\hat{G}$ is a counterexample, we assume for a contradiction that $\HC(\lambda)$ holds.
Using $\HC(\lambda)$ and Lemma~\ref{lem_fundamentalRayGraphs} in~$\hat{G}$ we find either a $\lambda$-star of rays or a $(\lambda,s)$-star of rays, with all rays belonging to the end~$\hat{\eps}$ that includes~$\eps$.
A short argument shows that we cannot get a $\lambda$-star of rays.
Our aim then is to use the $(\lambda,s)$-star of rays in~$\hat{G}$ to find a $\kappa$-star of rays in~$G$ with all rays belonging to~$\eps$.
An obvious candidate is the $\kappa$-star of rays formed by the centre ray and the distributor rays of the $(\lambda,s)$-star of rays.
However, the
\begin{enumerate}
    \item distributor rays,
    \item paths from the distributor rays to the centre ray, and
    \item the centre ray
\end{enumerate}
need not be included in~$G$. In the remainder of the proof, we adjust our candidate in three steps $i=1,2,3$ so that ($i$) is included in~$G$ at the end of step~$i$.

\begin{proof}[Proof of Theorem~\ref{thm_cofinalityupwards}]
Suppose that $\HC(\kappa)$ fails for some regular cardinal~$\kappa$ and let $\lambda>\kappa$ be any other cardinal with $\cf(\lambda)=\kappa$.
Then $\kappa>\aleph_0$ and there are a graph~$G$ with $|G|= \kappa$ and an end $\eps$ of~$G$ of degree~$\kappa$ such that no degree-witnessing collection of disjoint rays in $\eps$ admits a connected ray graph.
Let $(X_i\colon i<\kappa)$ be any family of $\kappa$ disjoint rays in~$\eps$, and let $s=(\lambda_i\colon i<\kappa)$ be any $\kappa$-sequence of ordinals $\lambda_i<\lambda$ with supremum~$\lambda$.
Then we let $\hat{G}$ be the graph obtained from $G$ by adjoining $\lambda$ many new rays, as follows.
For each $i<\kappa$ and $\ell<\lambda_i$ we disjointly add a new ray $X_{(i,\ell)}$ and join the $n$th vertex of $X_{(i,\ell)}$ to the $n$th vertex of $X_i$ by an edge for all~$n\in\N$.
Then the end $\eps$ is included in a unique end $\hat{\eps}$ of $\hat{G}$, and $\hat{\eps}$ contains all new rays~$X_{(i,\ell)}$ so that it has degree~$\lambda$.
For each $i<\kappa$ we let $\hat{X}_i:=\hat{G}[X_i\cup\bigcup_{\ell<\lambda_i}X_{(i,\ell)}]$ be the $\lambda_i$-star of rays $X_i$ and~$X_{(i,\ell)}$.
We claim that $\hat{G}$ and $\hat{\eps}$ witness that $\HC(\lambda)$ fails.

Assume for a contradiction that $\HC(\lambda)$ holds.
Then, by Lemma~\ref{lem_fundamentalRayGraphs}, we find either a $\lambda$-star of rays in~$\hat{G}$ or a $(\lambda,s)$-star of rays in~$\hat{G}$, with all rays belonging to~$\hat{\eps}$.
Let $S\subset \hat{G}$ be one of the two possibilities with path system~$\script{P}$.
We denote the centre ray of $S$ by $R$, and we denote the leaf rays by $R_j$ ($j<\lambda$).
Since $|G| = \kappa<\lambda$, we may assume without loss of generality that every leaf ray~$R_j$ is a tail of a ray $X_{(i,\ell)}$.
Since $R$ is countable while $\cf(\lambda)=\kappa$ is uncountable, we may assume without loss of generality that every $\hat{X}_i$ that meets~$R$ avoids all the leaf rays of~$S$.
Furthermore, we may assume without loss of generality that each $\hat{X}_i$ either avoids all the leaf rays of~$S$ or includes uncountably many of them.
In summary, for all $i<\kappa$ the $\lambda_i$-star $\hat{X}_i$ of rays either avoids all the leaf rays of~$S$, 
or avoids $R$ while uncountably many leaf rays of~$S$ are tails of the rays~$X_{(i,\ell)}$.
Since every ray $X_i$ is countable, this means that $S$ cannot be a $\lambda$-star of rays.

Now that we know that $S$ must be a $(\lambda,s)$-star of rays, we denote its distributor rays by $R_i$ ($i<\kappa$), and we revise our notation for its leaf rays which we now denote by $R_{(i,\ell)}$ ($i<\kappa$ and $\ell<\lambda_i$) so that $R_i$ is neighboured precisely to the leaf rays $R_{(i,\ell)}$ with $\ell<\lambda_i$ and the centre ray~$R$.
Our goal is to find a $\kappa$-star of rays in $G$ with all rays belonging to~$\eps$.
For this, we consider the subset $I\subset\kappa$ of all $i<\kappa$ for which $\hat{X}_i$ avoids $R$ and uncountably many leaf rays of $S$ are tails of the rays $X_{(i,\ell)}$. Note that $I$ is a $\kappa$-set.
For each $i\in I$, the uncountably many relevant leaf rays are neighboured to at most countably many distributor rays: otherwise there exist a leaf ray $R_{(i',\ell')}\subset X_{(i,\ell)}$ and a distributor ray $R_j\subset \hat{G}-X_i$, implying $R_j\subset\hat{G}-X_i-X_{(i,\ell)}$, such that the infinitely many $R_{(i',\ell')}$--$R_j$ paths in~$\script{P}$ avoid~$X_i$, contradicting the fact that $X_i$ is equal to the neighbourhood of $X_{(i,\ell)}$ in~$\hat{G}$.
Thus, we may apply the pigeonhole principle to find, for each $i\in I$, a distributor ray $R_j$ with $j=:j(i)$ that is neighboured to uncountably many leaf rays of~$S$ which are tails of rays~$X_{(i,\ell)}$ ($\ell<\lambda_i$).
Since each $X_i$ is countable and $R_{j(i)}$ is the centre ray of an $\aleph_1$-star of rays with leaf rays in $\hat{X}_i-X_i$, we deduce that each $R_{j(i)}$ meets $X_i$ infinitely.

By deleting combs from $S$ and deleting paths from $\script{P}$ we may assume without loss of generality that $S$ is a $\kappa$-star of rays with centre ray $R$, leaf rays $R_{j(i)}$ ($i\in I$) and path system~$\script{P}$.

Every component of $S-R$ is a comb whose spine meets $G$ infinitely.
Since these combs are disjoint for distinct elements of~$I$ and $\hat{X}_i\cap G=X_i$ is countable for all $i<\kappa$, each $\hat{X}_i$ ($i<\kappa$) meets at most countably many of them.
Conversely, each comb meets at most countably many $\hat{X}_i$.
Therefore, by deleting elements of~$I$ we achieve that
\begin{equation}
\begin{aligned}\label{eq:IprimePropertyOne}
    &\textit{each $\hat{X}_i$ $(i<\kappa)$ meets at most one of $R$ and the components of $S-R$}
\end{aligned}
\end{equation}
while maintaining that $I$ is a $\kappa$-set.

For every $i\in I$ we let $\script{P}_i$ be the set of all $R_{j(i)}$--$R$ paths in~$\script{P}$.
As each $R_{j(i)}$ meets $X_i$ infinitely, $X_i$ is equivalent to $R$ in the subgraph $X_i\cup R_{j(i)}\cup\bigcup\script{P}_i\cup R$, and we find a system $\script{P}_i'$ of infinitely many disjoint $X_i$--$R$ paths in this subgraph.
Then the paths in $\script{P}':=\bigcup_{i\in I}\script{P}_i'$ are independent by the choice of the subgraphs in which we found them and because each $X_i$ meets $S$ only in the component of $S-R$ that contains~$R_{j(i)}$ by~(\ref{eq:IprimePropertyOne}).
Hence, $\script{P}'$ ensures that $R$ is the centre ray of a $\kappa$-star $S'$ of rays with leaf rays $X_i\subset G$ ($i\in I$).
Then (\ref{eq:IprimePropertyOne}) translates to
\begin{align}\label{eq:IprimePropertyTwo}
\textit{each $\hat{X}_i$ $(i<\kappa)$ meets at most one of $R$ and the components of $S'-R$.}
\end{align}

It remains to modify $S'$ so that $S'\subset G$.
By~(\ref{eq:IprimePropertyTwo}), the neighbourhood of each component of $S'-R$ in $S'$ is included in~$R\cap G$, so every path in $\script{P}'$ ends in $R$ in a vertex of~$G$.

If $P\subset\hat{G}$ is a path with endvertices in $G$, then we write $\pi(P)$ for the subgraph of $G$ that arises from $P$ by replacing each $G$-path $Q\subset P$ with the subpath $uX_iv$ between the endvertices $u$ and $v$ of $Q$ on the ray $X_i$ that contains $u$ and~$v$.

For every $i\in I$, each path $P'\in\script{P}_i'$ starts in a vertex $x\in X_i\subset G$ and ends in a vertex $y\in R\cap G$ while avoiding $\hat{X}_i-X_i$ and all $\hat{X}_j-X_j$ ($j<\kappa$) that are met by~$R$, so $\pi(P')$ contains an $X_i$--$R$ path $P''$ that starts in $x$ and ends in~$y$.
We claim that, for every $i\in I$ and every path $P_1'\in\script{P}_i'$, the subgraph $\pi(P'_1)$ meets only finitely many other subgraphs $\pi(P_2')$ with $P_2'\in\script{P}_i'$.
Indeed, given $i\in I$ let us assume for a contradiction that there are infinitely many paths $P'$ and $P_0',P_1',\ldots$ in $\script{P}_i'$ such that every subgraph $\pi(P_n')$ meets~$\pi(P')$.
Then there is a vertex $v\in \pi(P')$ that lies on infinitely many of the other subgraphs, say all of them.
Clearly, $v$ must lie on some ray~$X_j$ with $j\neq i$.
Since the paths $P_n'$ are disjoint, at most one can contain $v$, say none.
Then each path $P_n'$ contains a $G$-path $Q_n$ with endvertices $x_n$ and $y_n$ in $X_j$ such that $v\in\mathring{x}_nX_j\mathring{y}_n$.
But the initial segment $X_j v$ of the ray $X_j$ up to~$v$ is finite and cannot contain the infinitely many distinct vertices~$x_n$, a~contradiction.
Therefore, we find for every $i<\kappa$ an infinite subset $\script{P}_i''\subset\{P''\colon P'\in\script{P}_i'\}$ of disjoint $X_i$--$R$ paths all contained in~$G$.
By~(\ref{eq:IprimePropertyTwo}) and the choice of~$\script{P}'$,
the paths in $\script{P}'':=\bigcup_{i\in I}\script{P}_i''$ are independent.
Hence $\script{P}''$ ensures that $R$ is the centre ray of a $\kappa$-star $S''$ of rays with leaf rays $X_i$ ($i\in I$) all belonging to~$\eps$ and $S''-R\subset G$.

Now, $S''-R$ is included in~$G$, but $R$ need not be included in~$G$.
However, the endvertices in $R$ of paths in $\script{P}''$ all lie in~$G$, because they inherit this property from~$\script{P}'$.
Hence, applying  Lemma~\ref{lem_countablecore} with $\mathcal{C}$ the collection of combs $X_i\cup\bigcup\script{P}_i''$ ($i\in I$) and the countable set $U: = V(R) \cap V(G)$ in $G$ finishes the proof.
\end{proof}

\begin{cor}\label{cor_theexamples2}\leavevmode
\begin{enumerate}
    \item $\HC(\kappa)$ fails for all $\kappa$ with $\cf\p{\kappa} \in \set{\mu^+}:{\cf\p{\mu} = \omega}$.
 \item  For every $\kappa$ with $\cf\p{\kappa} \in \set{\aleph_{\alpha}}:{\omega<\alpha<\omega_1}$ it  is consistent that $\HC(\kappa)$ fails.
\end{enumerate}
\end{cor}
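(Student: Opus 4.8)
The plan is to derive both assertions by feeding the base counterexamples of Corollary~\ref{cor_theexamples} into the lifting machinery of Theorem~\ref{thm_cofinalityupwards}. The key observation that makes this work is that in both parts the prescribed cofinality is automatically a \emph{regular} cardinal: in~(1) the value $\mu^+$ is a successor cardinal, and in~(2) the value $\aleph_\alpha=\cf\p{\kappa}$ is itself a cofinality; either way it is regular, which is exactly the hypothesis required to invoke Theorem~\ref{thm_cofinalityupwards}.

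For part~(1), I would argue as follows. Let $\kappa$ be any cardinal with $\cf\p{\kappa}=\mu^+$ for some $\mu$ with $\cf\p{\mu}=\omega$, and set $\nu:=\mu^+$. Since $\nu$ is a successor cardinal it is regular, and Corollary~\ref{cor_theexamples}(1) tells us that $\HC(\nu)$ fails. If $\kappa=\nu$ there is nothing further to do, the conclusion being exactly the input. Otherwise $\cf\p{\kappa}=\nu<\kappa$, and applying Theorem~\ref{thm_cofinalityupwards} with the regular cardinal $\nu$ playing the role of its~$\kappa$ and our~$\kappa$ playing the role of its~$\lambda$ yields that $\HC(\kappa)$ fails, as desired.

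For part~(2), the same scheme runs inside a consistency model. Let $\kappa$ satisfy $\cf\p{\kappa}=\aleph_\alpha$ for some $\omega<\alpha<\omega_1$, and put $\nu:=\aleph_\alpha$; being a cofinality, $\nu$ is regular. By Corollary~\ref{cor_theexamples}(2) there is a model of ZFC in which $\HC(\nu)$ fails. Since Theorem~\ref{thm_cofinalityupwards} is a theorem of ZFC, it holds in that model, so there $\HC(\lambda)$ fails for every $\lambda$ with $\cf\p{\lambda}=\nu$; in particular $\HC(\kappa)$ fails there. Hence it is consistent that $\HC(\kappa)$ fails.

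The argument is essentially bookkeeping once the two inputs are in place, so I do not anticipate a genuine obstacle. The only points that demand care are the regularity checks — a successor cardinal and a cofinality are both regular, which is precisely what lets Theorem~\ref{thm_cofinalityupwards} apply — and the harmless boundary case $\kappa=\nu$ in part~(1), where the conclusion already coincides with Corollary~\ref{cor_theexamples} and the lifting theorem need not be invoked at all. For part~(2) one should additionally observe that the entire reduction takes place within the single model furnished by Corollary~\ref{cor_theexamples}(2), in which both $\cf\p{\kappa}=\nu$ and the failure of $\HC(\nu)$ hold simultaneously, so that the internal, purely combinatorial application of the ZFC lifting theorem delivers the failure of $\HC(\kappa)$ without any need to track cardinal arithmetic across models.
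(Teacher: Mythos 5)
Your proposal is correct and matches the paper exactly: the paper derives this corollary by combining the base counterexamples of Corollary~\ref{cor_theexamples} with the lifting Theorem~\ref{thm_cofinalityupwards}, which is precisely your argument. The extra care you take (regularity of $\mu^+$ and of a cofinality, the boundary case $\kappa=\nu$, and running the lifting internally to the consistency model in part~(2)) is all sound and merely spells out what the paper leaves implicit.
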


\begin{proof}
This follows from Theorem~\ref{thm_cofinalityupwards} together with Corollary~\ref{cor_theexamples}.
\end{proof}

\printbibliography
\end{document}